\documentclass[]{amsart}

\usepackage{hyperref}
\usepackage{url}

\usepackage{amsmath,amssymb,amsthm}
\usepackage{fancyvrb}
\usepackage{enumerate} 
\usepackage{cases} 
\usepackage{color} 
\usepackage{graphicx}
\usepackage{bm}
\usepackage{xcolor}
\parskip 3pt

\usepackage{mathrsfs}  

\usepackage{mathtools}
\mathtoolsset{showonlyrefs}

\usepackage{geometry}

\newcommand{\subf}[2]{%
	{\small\begin{tabular}[t]{@{}c@{}}
			#1\\#2
	\end{tabular}}%
}

\numberwithin{equation}{section} \numberwithin{figure}{section}

{\theoremstyle{remark} \newtheorem{remark}{Remark}}
\newtheorem{definition}{Definition}[section]
\newtheorem{proposition}{Proposition}[section] 

\newtheorem{assumption}{Assumption}[section]
\newtheorem{theorem}{Theorem}[section]
\newtheorem{corollary}{Corollary}[section]
\newtheorem{lemma}{Lemma}[section]
{\theoremstyle{remark} \newtheorem{example}{Example}[section]}

\newcommand\pp{\partial}
\newcommand{\eps}{\varepsilon}

\newcommand{\calJ}{\mathcal{J}}
\newcommand{\calI}{\mathcal{I}}
\newcommand{\calL}{\mathcal{L}}

\newcommand{\calN}{\mathcal{N}}

\newcommand{\calV}{\mathcal{V}}
\newcommand{\calW}{\mathcal{W}}
\newcommand{\dD}{d_{\partial\Omega}}

\newcommand{\HNN}{\rm HNN}

\newcommand{\FF}{\mathscr{F}}
\newcommand{\GG}{\mathscr{G}}
\newcommand{\HH}{\mathscr{H}}

\newcommand{\N}{\mathbb{N}}
\newcommand{\R}{\mathbb{R}}
\newcommand{\Rd}{{\mathbb{R}^d}}


\newcommand{\va}{{{\bm a}}}

\newcommand{\vp}{{{\bm p}}}
\newcommand{\vq}{{{\bm q}}}

\newcommand{\vv}{{{\bm v}}}
\newcommand{\vw}{{{\bm w}}}
\newcommand{\vy}{{{\bm y}}}

\newcommand{\vphi}{{{\bm \phi}}}

\newcommand{\vpsi}{{{\bm \psi}}}
\newcommand{\vtau}{{{\bm \tau}}}
\newcommand{\vTheta}{{{\bm \Theta}}}

\newcommand\restr[2]{{
  \left.\kern-\nulldelimiterspace 
  #1 
  \vphantom{\big|} 
  \right|_{#2} 
  }}

\DeclareMathOperator\Div{div}

\begin{document} 
	
\title[A deep least-squares method for the obstacle problem]{A deep first-order system least squares method for the obstacle problem}
\author[G.~Acosta]{Gabriel Acosta}
\address[G.~Acosta]{Departamento de Matem\'atica, FCEyN, Universidad de Buenos Aires / IMAS, CONICET, Buenos Aires, Argentina}
\email{gacosta@dm.uba.ar}
\thanks{GA has been supported in part by PIP-2023, grant 11220220100246CO}

\author[E. Belén]{Eugenia Belén}
\address[E.~Belén]{Departamento de Matem\'atica, FCEyN, Universidad de Buenos Aires, Buenos Aires, Argentina}
\email{beleneugenia.96@gmail.com}
\thanks{}

\author[F.M.~Bersetche]{Francisco M.~Bersetche}
\address[F.M.~Bersetche]{Departamento de Matem\'atica, FCEyN, Universidad de Buenos Aires, Buenos Aires, Argentina}
\email{bersetche@gmail.com}
\thanks{FMB has been supported in part  by PIP-2023, grant 11220220100246CO}

\author[J.P.~Borthagaray]{Juan Pablo~Borthagaray}
\address[J.P.~Borthagaray]{Instituto de Matem\'atica y Estad\'istica ``Rafael Laguardia", Facultad de Ingenier\'ia, Universidad de la Rep\'ublica, Montevideo, Uruguay}
\email{jpborthagaray@fing.edu.uy}
\thanks{JPB has been supported in part by Fondo Clemente Estable grant 172393.}

\begin{abstract}
We propose a deep learning approach to the obstacle problem inspired by the first-order system least-squares (FOSLS) framework. This method reformulates the problem as a convex minimization task; by simultaneously approximating the solution, gradient, and Lagrange multiplier, our approach provides a flexible, mesh-free alternative that scales efficiently to high-dimensional settings. Key theoretical contributions include the coercivity and local Lipschitz continuity of the proposed least-squares functional, along with convergence guarantees via $\Gamma$-convergence theory under mild regularity assumptions. Numerical experiments in dimensions up to 20 demonstrate the method's robustness and scalability, even on non-Lipschitz domains.
\end{abstract}
 
\maketitle

\section{Introduction} \label{sec:introduction}

Variational inequalities arise in the modeling of several nonlinear phenomena such as contact, friction, and plasticity~\cite{chouly2023finite, kinderStam,rodrigues1987obstacle}.
A classic example is the equilibrium position of an elastic membrane constrained to remain above a given obstacle. These problems are inherently challenging for numerical approaches due to the low regularity of solutions and the a priori unknown contact boundary. While extensive work has been done on numerical methods for such problems, including finite element approaches (see, for example, ~\cite{ciarlet2002finite, glowinski2013numerical,tremolieres2011numerical}), the exploration of least-squares formulations remains relatively limited, particularly in contexts beyond finite element methods. 
We may mention \cite{Fuhrer20} as one of the first works to formalize least-squares formulations for the obstacle problem. The first-order system least-squares (FOSLS) approach proposed in that reference reformulates the obstacle problem by introducing auxiliary variables, such as the Lagrange multiplier and the gradient of the solution. This transforms the obstacle problem into a convex minimization problem subject to linear constraints. The FOSLS framework has demonstrated strong potential for providing stable approximations, facilitating error estimation, and enabling adaptivity.

In parallel, deep learning has emerged as a powerful tool for approximating solutions to partial differential equations (PDEs) by minimizing loss functionals that enforce the governing equations and boundary conditions~\cite{E18,He20,liu2,liu1,PINN,DGM18,FNM}. While not always competitive with classical methods in low-dimensional settings, neural networks show promise in high-dimensional contexts and for problems with complex geometries~\cite{weinan2022some,wojtowytsch2020can}. Importantly, neural network formulations based on first-order systems~\cite{DeepFOSLS,liu2,liu1,opschoor2024first} circumvent the need for second-order derivatives in the cost functional, streamlining computation and enabling the approximation of weak solutions.

This paper bridges the gap between the two aforementioned frameworks by proposing a deep learning approach based on the FOSLS methodology for the obstacle problem. Specifically, we adapt the Deep FOSLS framework~\cite{DeepFOSLS} to the first-order system formulation introduced in~\cite{Fuhrer20}. This approach provides a mesh-free alternative to traditional discretization methods that leverages the flexibility and scalability of neural networks. By simultaneously approximating the solution, the gradient, and the Lagrange multiplier, we demonstrate the efficiency and adaptability of this methodology in solving obstacle problems in high-dimensional instances. Moreover, the formulation within the Deep FOSLS framework enables the derivation of convergence results under appropriate regularity assumptions.

\subsection{Related work}
Recent advancements in the literature have explored the approximation of solutions to the obstacle problem through machine learning techniques. 
In \cite{cheng2023deep}, the obstacle problem is reformulated as a penalized minimization problem whose solutions converge to those of the original problem as the penalty parameter approaches infinity. The authors employ neural networks to approximate these minimizers, discretizing the cost functional through Monte Carlo integration. By decomposing the total error into three components—(i) an approximation error (dependent on network depth and width), (ii) a statistical error (governed by sample size), and (iii) an optimization error (related to the empirical loss minimization)—they establish non-asymptotic convergence rates under regularity assumptions on the penalized problem's solutions.
In \cite{zhao2022two}, the authors propose two distinct approaches for solving the obstacle problem in one and two dimensions. The first method strictly enforces the obstacle constraint through direct energy minimization, while the second employs a penalized formulation that reduces to solving a linear Dirichlet problem when the constraint is inactive. Both approaches conduct the minimization within a function space parameterized by shallow neural networks. The convergence analysis leverages the universal approximation properties of neural networks under the $L^\infty$-norm.
Reference \cite{bahja2023physics} approaches the obstacle problem within a Physics-Informed Neural Networks (PINNs) framework, minimizing the $L^2$-norm of the PDE residual while strongly enforcing boundary conditions. Similarly, \cite{DGM18} employs an analogous residual-based loss function but incorporates boundary conditions through penalization; this approach is subsequently applied to parabolic obstacle problems arising in American options pricing.
Additionally, \cite{darehmiraki2022deep} formulates the obstacle problem as a quadratic programming problem using finite element methods, subsequently solved with a neural network model.

\subsection{Organization of the paper}
The structure of this paper is as follows. Section~\ref{sec:problem_descr} introduces the reformulation of the obstacle problem as a first-order system. Section~\ref{sec:description} outlines the Deep FOSLS approach, detailing the imposition of boundary conditions and the training strategy. In Section~\ref{sec:analysis}, we establish the convergence of the method through $\Gamma$-convergence theory. 
Section \ref{sec:numerical} concludes with numerical experiments in dimensions up to 20, demonstrating the method’s scalability and robustness even on non-Lipschitz domains. 

\section{Variational Formulation and First-Order Reformulation} \label{sec:problem_descr}

This section offers a brief theoretical background on the obstacle problem and discusses the theoretical foundation for reformulating it as a first-order system, which a critical step in developing our deep learning methodology. Let $\Omega \subset \Rd$ be an open, bounded domain with Lipschitz boundary, $g \in H^1_0(\Omega)$ and $f \in L^2(\Omega)$. The obstacle problem consists in finding
\begin{equation} \label{eq:min-Dirichlet}
u_0 := \arg\min_{v \in K} I(v),    
\end{equation}
where $I$ is the classical Dirichlet energy functional
\[I \colon H^1_0(\Omega) \to \R, \quad I(v) = \frac12 \int_\Omega | \nabla v|^2 - \int_\Omega f v,
\]
and $K$ the convex set 
\[
K := \{ v \in H^1_0(\Omega) \, \colon \, v \ge g \mbox{ a.e. in } \Omega \}.
\]

It is well-known that this energy-minimization problem admits a unique solution $u_0 \in K$, and that such a solution  satisfies the variational inequality
\begin{equation} \label{eq:variational-inequality}
(\nabla u_0, \nabla (v-u_0)) \ge (f ,v-u_0) \quad \forall v \in K,
\end{equation}
 where the parentheses stand for the duality pairing in $L^2(\Omega)$. Furthermore, in the following we shall denote the $L^2(\Omega)$-norm by $\| \cdot \|$.

In the space $H^1_0(\Omega)$, we have the well-known Poincar\'e inequality
\begin{equation}\label{eq:poincare}
\| v \| \le C_P \| \nabla v \| \quad \forall v \in H^1_0(\Omega) .
\end{equation}
The constant $C_P$ only depends on the diameter of $\Omega$. This results allows one to obtain the following simple stability bound for solutions to the obstacle problem.

\begin{lemma}[$H^1$-stability]
Let $f \in L^2(\Omega)$, $g \in H^1_0(\Omega)$, and $u_0 \in K$ satisfy \eqref{eq:variational-inequality}. Then, 
\begin{equation}\label{eq:stability}
\| \nabla u_0 \| \le C \left( \| f \| + \| \nabla g \| \right), 
\end{equation}
with a constant $C = C(\Omega)$.
\end{lemma}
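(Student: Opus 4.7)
The plan is to use $v = g$ as a test function in the variational inequality \eqref{eq:variational-inequality}. Since $g \in H_0^1(\Omega)$ and $g \ge g$ trivially, we have $g \in K$, so this is admissible.

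Substituting into \eqref{eq:variational-inequality} and rearranging, I would obtain
\begin{equation}
\| \nabla u_0 \|^2 \le (\nabla u_0, \nabla g) + (f, u_0) - (f, g).
\end{equation}
The next step is to bound each term on the right-hand side. By Cauchy--Schwarz, $(\nabla u_0, \nabla g) \le \|\nabla u_0\| \|\nabla g\|$. For the two terms involving $f$, I would apply Cauchy--Schwarz followed by the Poincar\'e inequality \eqref{eq:poincare}: $(f, u_0) \le \|f\| \|u_0\| \le C_P \|f\| \|\nabla u_0\|$, and similarly $-(f,g) \le C_P \|f\| \|\nabla g\|$.

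Combining these estimates yields
\begin{equation}
\| \nabla u_0 \|^2 \le \|\nabla u_0\|\bigl(\|\nabla g\| + C_P \|f\|\bigr) + C_P \|f\| \|\nabla g\|.
\end{equation}
The conclusion then follows by a Young-type absorption argument: apply $ab \le \tfrac12 a^2 + \tfrac12 b^2$ to the first term on the right to absorb a factor $\tfrac12 \|\nabla u_0\|^2$ into the left-hand side, and then take a square root, producing a constant $C = C(\Omega)$ depending only on $C_P$ (hence only on $\operatorname{diam}(\Omega)$).

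There is essentially no obstacle here: the only small subtlety is verifying that $g$ itself is an admissible competitor, which is immediate from the definition of $K$ and the hypothesis $g \in H_0^1(\Omega)$. Everything else is Cauchy--Schwarz, Poincar\'e, and Young.
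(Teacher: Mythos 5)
Your proposal is correct and follows essentially the same route as the paper: test with $v = g$ in \eqref{eq:variational-inequality}, bound the resulting terms by Cauchy--Schwarz and Poincar\'e, and absorb the $\|\nabla u_0\|$ factor on the right into the left-hand side via a Young-type inequality. The only cosmetic difference is the order in which Poincar\'e and Young are applied; the paper introduces an explicit small parameter $\epsilon$ in Young's inequality before converting $\|u_0\|$ to $\|\nabla u_0\|$, whereas you apply Poincar\'e first and use Young with coefficient $1/2$, but the argument is the same in substance.
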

\begin{proof}
    We use \eqref{eq:variational-inequality} with $v = g$:
    \[
    \| \nabla u_0 \|^2 \le (\nabla u_0, \nabla g) + (f, u_0 - g) \le \frac12 \| \nabla u_0 \|^2 + \frac12 \| \nabla g \|^2 + \| f \| \|g\| + \| f \| \|u_0\| .
    \]
    For some $\epsilon >0$ to be chosen, we apply Young's and Poincar\'e's inequalities to obtain
    \[
    \| f \| \|u_0\| \le \frac{\|f\|^2}{4 \epsilon} + \epsilon \| u_0\|^2 
     \le \frac{\|f\|^2}{4 \epsilon} + \epsilon C_P^2 \| \nabla u_0 \|^2.
    \]
    Therefore, we deduce
    \[
\left( \frac12 - \epsilon C_P^2 \right) \| \nabla u_0 \|^2 \le 
\frac12 \| \nabla g \|^2 + \| f \| \|g\| + \frac{\|f\|^2}{4 \epsilon}.
    \]
    To conclude, it suffices to fix $\epsilon$ such that the term in parenthesis in the left hand side is positive and apply the Poincar\'e's inequality to $g \in H^1_0(\Omega)$.
\end{proof}

Under additional regularity hypotheses (cf. \cite{kinderStam}), the energy minimizer $u_0 \in K$ satisfies the {\em complementarity-form} equation
\begin{equation} \label{eq:obstacle}
\left\lbrace \begin{aligned}
    \min \{ -\Delta u_0 - f, \ u_0 - g \} = 0  & \mbox{ in }\Omega,\\
    u_0 = 0  & \mbox{ on } \pp\Omega.
 \end{aligned} \right.    
\end{equation}
In this work, following \cite{Fuhrer20}, we consider an equivalent reformulation of the obstacle problem as a first-order least squares system. For that purpose, we introduce two auxiliary variables: the flux density 
\[
\vphi_0 := \nabla u_0 \in L^2(\Omega; \R^d),
\]
and the Lagrange multiplier
\[
\lambda_0 := -\Delta u_0 - f \in H^{-1}(\Omega) := [H^1_0(\Omega)]'.
\]
In the following, we employ angle brackets to denote the $H^{-1}(\Omega)$-$H^1_0(\Omega)$ duality pairing.
We consider the space 
\[
\calV:= \{ \vv = (u,\vphi, \lambda) \in H^1_0(\Omega) \times L^2(\Omega; \, \R^d) \times H^{-1}(\Omega) \, \colon \, \Div \vphi + \lambda \in L^2(\Omega) \},
\]
with norm
\[
\| (u,\vphi, \lambda) \|_\calV := \left( \| \nabla u \|^2 + \| \vphi\|^2 + \| \Div \vphi + \lambda\|^2 \right)^\frac12,
\]
introduce a least-squares functional $\calJ \colon \calV \to \R$,
\begin{equation} \label{eq:def-calJ}
  \calJ(\vq; f,g) := \| \Div \vphi + \lambda + f \|^2 + \| \nabla u - \vphi \|^2 + \langle \lambda, u -g \rangle,  
\end{equation}
and the convex set
\[
K^\calV := \{ \vv = (u,\vphi, \lambda)\in \calV \, \colon \, u \ge g, \ \lambda \ge 0 \}, 
\]
and study the minimization problem
\begin{equation}\label{eq:min-FOSLS}
\vq_0 = \arg \min_{\vv \in K^\calV} \calJ(\vv; f, g).    
\end{equation}

According to \cite[Theorem 1]{Fuhrer20}, problems \eqref{eq:min-Dirichlet} and \eqref{eq:min-FOSLS} are equivalent: the unique solution of \eqref{eq:min-FOSLS} can be characterized as $\vq_0 = (u_0, \nabla u_0, -\Delta u_0 - f)$, where $u_0 \in H^1_0(\Omega)$ is the unique solution to \eqref{eq:min-Dirichlet}.

Even though the functional $\calJ$ is suitable for the analysis we pursue in this paper,
our approach makes use of Monte Carlo integration; we therefore prefer to avoid the duality pairing in $\calJ$ and replace it with suitable inner products in $L^2(\Omega)$. For the sake of rewriting $\calJ$, let us consider a triple $\vv = (u,\vphi, \lambda) \in \calV$. Because $\gamma := \Div \vphi + \lambda \in L^2(\Omega)$, we can integrate by parts to rewrite the last term in \eqref{eq:def-calJ} as
\[
\langle \lambda, u -g \rangle = \langle \gamma - \Div \vphi , u -g \rangle = (\gamma , u-g)  +  (\vphi , \nabla (u-g) ).
\]

Therefore, we introduce the space 
\[
\calW:= H^1_0(\Omega) \times  L^2(\Omega; \, \R^d) \times L^2(\Omega) \]
furnished with the product norm
\[
\| (u,\vphi, \gamma) \|_\calW := \left( \| \nabla u \|^2 + \| \vphi\|^2 + \| \gamma\|^2 \right)^\frac12,
\] the convex set
\begin{equation} \label{eq:def-kcalW}
K^\calW := \{ \vw = (u,\vphi, \gamma) \in \calW \, \colon \, u \ge g, \ \gamma \ge \Div \vphi \},
\end{equation}
and the functional $\calL \colon \calW \to \R$

\begin{equation}\label{eq:def-calL}
    \calL(\vw; f,g) := \| \gamma + f \|^2 + \| \nabla u - \vphi \|^2 + (\gamma , u-g) + (\vphi , \nabla (u-g) ) .
\end{equation}
We are interested in the minimization problem
\begin{equation}\label{eq:min-calL}
    \vp_0 = \arg \min_{\vw \in K^\calW} \calL(\vw; f, g). 
\end{equation}

It is clear that problems \eqref{eq:min-FOSLS} and \eqref{eq:min-calL} are equivalent. Indeed, we can define the bijective, norm-preserving mapping
\begin{equation} \label{eq:V-to-W}
 \calV \ni (u,\vphi,\lambda) \mapsto (u, \vphi, \Div \vphi + \lambda) \in \calW  ;
\end{equation}
we point out that the restriction of this map to $K^\calV$ is onto $K^\calW$. The equality
$\calJ(u,\vphi,\lambda) = \calL  (u, \vphi, \Div \vphi + \lambda)$ follows immediately from the definitions of both functionals.
Thus, $\calL$ has a unique minimizer in $K^\calW$ and it is given by $\vp_0 := (u_0, \vphi_0, \gamma_0),$ where  $\vq_0 = (u_0,\vphi_0,\lambda_0)$ is the minimizer of $\calJ$ in $K^\calV$, and $\gamma_0 := \lambda_0 + \Div \vphi_0$. Recalling that $\vq_0 = (u_0, \nabla u_0, -\Delta u_0 - f)$, we obtain $\vp_0 = (u_0, \nabla u_0, - f)$.

The following theorem yields the coercivity of $\calL$ over $K^\calW$.
\begin{proposition}[coercivity] \label{prop:coercivity}
    Let $\vp_0 \in K^\calW$ satisfy \eqref{eq:min-calL}. Then, there exists some $C = C(\Omega)>0$ such that 
    \[
    \calL(\vp; f,g) \ge C \| \vp - \vp_0 \|_\calW^2 \quad \forall \vp \in K^\calW.
    \]
\end{proposition}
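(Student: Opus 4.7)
The strategy is to expand $\calL$ around $\vp_0 = (u_0, \nabla u_0, -f)$ into an exact identity featuring the squared residuals and the non-negative complementarity pairing $\langle \lambda, u - g\rangle$, and then close the estimate with integration by parts plus a Poincar\'e--Young argument.

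First I would set $\delta u := u - u_0$, $\delta \vphi := \vphi - \nabla u_0$, $\delta \gamma := \gamma + f$, and introduce the multipliers $\lambda := \gamma - \Div \vphi \in H^{-1}(\Omega)$ and $\lambda_0 := -\Delta u_0 - f \in H^{-1}(\Omega)$, both non-negative. Using $-\Delta u_0 = \lambda_0 + f$, $\vphi_0 = \nabla u_0$, and the complementarity $\langle \lambda_0, u_0 - g\rangle = 0$, a direct manipulation of \eqref{eq:def-calL} should yield the identity
\[
\calL(\vp; f, g) = \|\delta \gamma\|^2 + \|\nabla \delta u - \delta \vphi\|^2 + \langle \lambda, u - g\rangle,
\]
whose last summand is non-negative on $K^\calW$. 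This immediately controls $\|\delta \gamma\|^2$ and $\|\nabla \delta u - \delta \vphi\|^2$, but not $\|\nabla \delta u\|^2$ and $\|\delta \vphi\|^2$ separately.

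The main obstacle is that the residual $\nabla \delta u - \delta \vphi$ is blind to directions in which $\delta \vphi = \nabla \delta u$, so one cannot extract $\|\nabla \delta u\|^2 + \|\delta \vphi\|^2$ from the first two summands alone: the complementarity term must be brought in. To do so I would begin with the parallelogram identity
\[
\|\nabla \delta u\|^2 + \|\delta \vphi\|^2 = \|\nabla \delta u - \delta \vphi\|^2 + 2(\delta \vphi, \nabla \delta u),
\]
integrate by parts in the cross term (admissible since $\delta u \in H^1_0(\Omega)$ and $\delta \vphi \in L^2(\Omega;\R^d)$) to obtain $(\delta \vphi, \nabla \delta u) = -(\delta \gamma, \delta u) + \langle \delta \lambda, \delta u\rangle$ with $\delta \lambda := \delta \gamma - \Div \delta \vphi$, and then bound
\[
\langle \delta \lambda, \delta u\rangle = \langle \lambda, u - g\rangle - \langle \lambda, u_0 - g\rangle - \langle \lambda_0, u - g\rangle \le \langle \lambda, u - g\rangle,
\]
dropping two non-negative pairings on $K^\calW$.

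Combining these gives $\|\nabla \delta u\|^2 + \|\delta \vphi\|^2 \le \|\nabla \delta u - \delta \vphi\|^2 - 2(\delta \gamma, \delta u) + 2\langle \lambda, u - g\rangle$. I would then estimate $-2(\delta \gamma, \delta u)$ through Cauchy--Schwarz, Poincar\'e's inequality \eqref{eq:poincare}, and Young's inequality, with parameter tuned so that only a fraction of $\|\nabla \delta u\|^2$ is absorbed against $\|\delta \gamma\|^2$. Adding $\|\delta \gamma\|^2$ to both sides and comparing with the identity for $\calL$, I expect to arrive at
\[
\|\vp - \vp_0\|_\calW^2 \le M(\Omega)\, \calL(\vp; f, g)
\]
for a constant $M(\Omega)$ expressible through $C_P$, which is the claim with $C = 1/M$. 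The delicate bookkeeping is precisely in balancing the Young parameter against $C_P$: a direct Taylor expansion around $\vp_0$ would not suffice because the Hessian of $\calL$ need not be positive semidefinite on all of $\calW$, and it is the extra slack $\langle \lambda, u-g\rangle \ge \langle \delta \lambda, \delta u\rangle$ contributed by feasibility that makes the estimate close for arbitrary $\Omega$.
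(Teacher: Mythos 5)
Your proof is correct, but it takes a genuinely different route from the paper. The paper's proof is a one-liner: it observes that the map $(u,\vphi,\lambda)\mapsto(u,\vphi,\Div\vphi+\lambda)$ in \eqref{eq:V-to-W} is a norm-preserving bijection from $K^\calV$ onto $K^\calW$ satisfying $\calJ(u,\vphi,\lambda)=\calL(u,\vphi,\Div\vphi+\lambda)$, and then transports the coercivity estimate for $\calJ$ on $K^\calV$ --- which is precisely \cite[Theorem~1]{Fuhrer20}, taken as a black box --- to the $\calL$/$K^\calW$ setting. What you do instead is re-prove that coercivity from scratch: the expansion $\calL(\vp)=\|\delta\gamma\|^2+\|\nabla\delta u-\delta\vphi\|^2+\langle\lambda,u-g\rangle$, the identity $\|\nabla\delta u\|^2+\|\delta\vphi\|^2=\|\nabla\delta u-\delta\vphi\|^2+2(\delta\vphi,\nabla\delta u)$, the integration by parts $(\delta\vphi,\nabla\delta u)=-(\delta\gamma,\delta u)+\langle\delta\lambda,\delta u\rangle$, the splitting $\langle\delta\lambda,\delta u\rangle=\langle\lambda,u-g\rangle-\langle\lambda,u_0-g\rangle-\langle\lambda_0,u-g\rangle$ with the last two terms non-negative by feasibility, and finally Poincar\'e--Young to absorb $-2(\delta\gamma,\delta u)$; choosing e.g.\ $\epsilon=1/2$ gives $\|\vp-\vp_0\|_\calW^2\le 2(1+2C_P^2)\,\calL(\vp)$. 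Your argument is self-contained and makes the mechanism of the coercivity visible (it is, in effect, an $\calL$-side re-derivation of F\"uhrer's lemma), at the cost of redoing work the paper delegates to a citation; the paper's route is much shorter but opaque without consulting \cite{Fuhrer20}. Two small points worth making explicit in a polished write-up: the integration by parts $(\delta\vphi,\nabla\delta u)=-\langle\Div\delta\vphi,\delta u\rangle$ is to be read as an $H^{-1}$--$H^1_0$ duality pairing (since $\delta\vphi$ is only $L^2$), which is legitimate because $\delta u\in H^1_0(\Omega)$; and dropping $\langle\lambda,u_0-g\rangle$, $\langle\lambda_0,u-g\rangle$ uses the standard but nontrivial fact that a non-negative element of $H^{-1}(\Omega)$ pairs non-negatively with a non-negative element of $H^1_0(\Omega)$.
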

\begin{proof}
This is a rewriting of \cite[Theorem 1]{Fuhrer20} in terms of the admissibility class $K^\calW$. Indeed, such a result states that if $\vq_0 \in K^\calV$ solves \eqref{eq:min-FOSLS}, then
\[
\calJ(\vq; f,g) \ge C \| \vq - \vq_0 \|_\calV^2 \quad \forall \vq \in K^\calV.
\]
Given $\vp = (v, \vtau, \xi) \in K^\calW,$ we use \eqref{eq:V-to-W} to introduce $\vq = (v, \vtau, \xi - \Div\vtau) \in K^\calV$ and deduce
\[
\calL(\vp; f,g) =  \calJ(\vq; f,g) \ge C \| \vq - \vq_0 \|_\calV^2 = C \| \vp - \vp_0 \|_\calW^2.
\]
\end{proof}

We next prove the local Lipschitz continuity of $\calL$ around the minimizer $\vp_0$.

\begin{proposition}[local Lipschitz continuity] \label{prop:continuity}
Let $\vp_0 \in K^\calW$ satisfy \eqref{eq:min-calL}. Then, there exists some $C = C(\Omega, f, g)>0$ such that 
    \[
    \calL(\vp; f,g) \le C (1 + \| \vp - \vp_0 \|_\calW) \ \| \vp - \vp_0 \|_\calW \quad \forall \vp \in K^\calW.
    \]
\end{proposition}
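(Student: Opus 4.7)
The plan rests on the fact that $\calL(\vp_0;f,g) = 0$. Since $\vp_0 = (u_0, \nabla u_0, -f)$, the first two summands of $\calL$ vanish at $\vp_0$; for the remaining pairing term, integration by parts (legitimate because $u_0-g \in H^1_0(\Omega)$) together with the complementarity identity $\langle -\Delta u_0 - f, u_0 - g\rangle = 0$ yields $(-f, u_0-g) + (\nabla u_0, \nabla(u_0-g)) = 0$. I would then expand $\calL(\vp;f,g)$ around $\vp_0$ and bound each piece of the expansion in terms of $\delta := \|\vp-\vp_0\|_\calW$.

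Writing $\vp = (u,\vphi,\gamma)$, the squared terms are easy: $\|\gamma+f\| = \|\gamma-\gamma_0\|\le \delta$ and $\|\nabla u - \vphi\| \le \|\nabla(u-u_0)\| + \|\vphi-\vphi_0\|\le 2\delta$, so both contribute $O(\delta^2)$. The core of the argument is the pairing term $(\gamma,u-g)+(\vphi,\nabla(u-g))$. I would split it using the decompositions $\gamma = \gamma_0 + (\gamma-\gamma_0)$, $\vphi = \vphi_0 + (\vphi-\vphi_0)$, and $u-g = (u_0-g) + (u-u_0)$, producing four summands to be handled separately.

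The piece $(\gamma_0, u_0-g)+(\vphi_0,\nabla(u_0-g))$ vanishes by complementarity, as noted above. The piece $(\gamma_0, u-u_0)+(\vphi_0,\nabla(u-u_0)) = -(f,u-u_0) + (\nabla u_0, \nabla(u-u_0))$ is bounded by $(\|f\| + \|\nabla u_0\|)\,\|\nabla(u-u_0)\|$ via Cauchy--Schwarz and Poincar\'e, and the $H^1$-stability bound \eqref{eq:stability} turns this into $C(\Omega,f,g)\,\delta$. The third piece $(\gamma-\gamma_0, u_0-g)+(\vphi-\vphi_0,\nabla(u_0-g))$ is controlled analogously by Cauchy--Schwarz, Poincar\'e applied to $u_0-g \in H^1_0(\Omega)$, and \eqref{eq:stability}, yielding another $C(\Omega,f,g)\,\delta$ term. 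The last piece $(\gamma-\gamma_0, u-u_0)+(\vphi-\vphi_0,\nabla(u-u_0))$ is $O(\delta^2)$ by a direct application of Cauchy--Schwarz and Poincar\'e to $u-u_0 \in H^1_0(\Omega)$.

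Summing the four bounds gives $\calL(\vp;f,g) \le C(\Omega,f,g)\,\delta + C\,\delta^2 = C(\Omega,f,g)(1+\delta)\,\delta$, which is the desired estimate. The main obstacle, though mild, is the clean rewriting of the pairing term: once the complementarity identity kills the $\vp_0$-$\vp_0$ contribution, only pieces linear or quadratic in $\delta$ remain. I note in passing that the admissibility constraint $\gamma \ge \Div\vphi$ is not used in this upper bound; only the Hilbert-space structure and the optimality of $u_0$ are needed.
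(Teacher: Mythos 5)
Your proof is correct and follows essentially the same route as the paper's: both rewrite $\calL(\vp;f,g)$ around $\vp_0=(u_0,\nabla u_0,-f)$, use the complementarity identity $\langle -\Delta u_0 - f,\,u_0 - g\rangle = 0$ to eliminate the zeroth-order part of the pairing term, and then close the argument with Cauchy--Schwarz, Poincar\'e's inequality, and the $H^1$-stability bound \eqref{eq:stability}. The only cosmetic difference is the organization of the pairing term: you carry out the full bilinear expansion into four pieces (one vanishing, two linear in $\delta$, one quadratic), whereas the paper groups $(\xi,v-g) - (\gamma_0,u_0-g) = (\xi,v-u_0) + (\xi-\gamma_0,u_0-g)$ and then splits $\|\xi\|\le\|\xi-\gamma_0\|+\|\gamma_0\|$, arriving at the same $\delta^2 + C(\Omega,f,g)\,\delta$ structure. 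Your observation that the admissibility constraint $\gamma\ge\Div\vphi$ plays no role in the upper bound is also consistent with the paper's argument.
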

\begin{proof}
Let $\vp = (v, \vtau, \xi) \in K^\calW$. Then,
\[
\calL(\vp; f,g) =  \| \xi + f \|^2 + \| \nabla v - \vtau \|^2 + (\xi , v-g) + (\vtau , \nabla (v-g) ) .
\]
The unique minimizer of $\calL$ in $K^\calW$ is 
\[
\vp_0 := (u_0, \vphi_0, \gamma_0) =(u_0, \nabla u_0, -f) ,
\]
where $u_0 \in H^1_0(\Omega)$ is the solution of the obstacle problem, and therefore 
\[
(-\Div \vphi_0 + \gamma_0)(u_0 - g) = 0 \quad \mbox{a.e. in } \Omega.
\]
Using these properties, we can write
\begin{equation} \label{eq:bound-calL} \begin{split}
   \calL(\vp; f,g) =  & \| \xi - \gamma_0 \|^2 + \| \nabla (v-u_0) - (\vtau - \vphi_0) \|^2 \\ & + (\xi , v-g) - (\gamma_0, u_0-g) + (\Div \vphi_0, u_0-g) + (\vtau , \nabla (v-g) ). 
\end{split} \end{equation}

On the one hand, it is clear that
\[
 \| \xi - \gamma_0 \|^2 + \| \nabla (v-u_0) - (\vtau - \vphi_0) \|^2 \le C \| \vp - \vp_0 \|_\calW^2.
\]

On the other hand, we can bound
\[ \begin{split}
| (\xi , v-g) - (\gamma_0, u_0-g) | & =
| (\xi , v-u_0) + (\xi-\gamma_0, u_0-g) |
\le \| \xi \| \| v- u_0\| + \| \xi-\gamma_0 \| \|u_0-g\| \\
& \le \| \xi - \gamma_0 \| \| v- u_0\| + \| \gamma_0 \| \| v- u_0\| + \| \xi-\gamma_0 \| \|u_0-g\| \\
& \le \| \vp - \vp_0 \|_\calW^2 + ( \| \gamma_0 \| + \|u_0-g\| ) \, \| \vp - \vp_0 \|_\calW ,
\end{split}
\]
and similarly
\[
\begin{split}
    |(\Div \vphi_0, u_0-g) + (\vtau , \nabla (v-g) )| \le \| \vp - \vp_0 \|_\calW^2 + ( \| \vphi_0 \| + \|u_0-g\| ) \, \| \vp - \vp_0 \|_\calW .
\end{split}
\]
We recall $\gamma_0 = -f$ and $\vphi_0 = -\nabla u_0$. Because $u_0 \in H^1_0(\Omega)$, we can collect the estimates above and apply the Poincar\'e inequality together with \eqref{eq:stability} to conclude the desired result.
\end{proof}

\section{Description of the method} \label{sec:description}
We propose a method to approximate the unique minimizer $(u_0, \vphi_0, \gamma_0)$ of the functional in \eqref{eq:def-calL} by using neural networks. A natural, direct approach involves seeking a set of parameters $\vTheta_0 \in \R^m$ such that
\[
\calL(u_{\vTheta_0}, \vphi_{\vTheta_0}, \gamma_{\vTheta_0}) = \min_{\vTheta \in \R^m} \calL(u_{\vTheta}, \vphi_{\vTheta}, \gamma_{\vTheta}),
\]
with functions $(u_{\vTheta}, \vphi_{\vTheta}, \gamma_{\vTheta})$ belonging to a suitable neural network space. 
However, a critical challenge arises: the neural network approximations $(u_{\vTheta}, \vphi_{\vTheta}, \gamma_{\vTheta})$ are not guaranteed a priori to satisfy the admissibility constraints $ u_{\vTheta} \ge g$, $\gamma_{\vTheta} \ge \Div \vphi_{\vTheta}$ and the homogeneous Dirichlet boundary condition on $u_{\vTheta}$. 
It is a common practice in neural network approaches to weakly impose such constraints through penalty terms added to the loss functional. 
In contrast, our method enforces these conditions strongly by design, ensuring strict adherence throughout the optimization process. For boundary conditions, we adopt the strategy proposed in \cite{Berg18}, where pre-training the network to explicitly satisfy boundary data reduces the number of iterations needed for convergence.

As discussed in Section \ref{subsec:strong_b} below, we initially create certain auxiliary functions to ensure $(u_{\vTheta}, \vphi_{\vTheta}, \gamma_{\vTheta}) \in K^\calW$ for all $\vTheta \in \R^m$. The optimization procedure involves sampling $N$ points $\{x_k\}_{k=1}^N \subset \Omega$ and approximating
\[
\calL(u_{\vTheta}, \vphi_{\vTheta}, \gamma_{\vTheta}) \approx L_N(u_{\vTheta}, \vphi_{\vTheta}, \gamma_{\vTheta}),
\]
at each step of a gradient descent algorithm, where $L_N$ is defined as
\begin{equation}\label{eq:discrete_cost} \begin{split}
    	L_N (u, \vphi, \gamma) :=  \frac{|\Omega|}{N} 
	\sum_{k=1}^N \ & \big( \gamma + f(x_k) \big)^2 + \big|\nabla u(x_k) - \vphi(x_k) \big|^2 \\ & + \gamma(x_k) \big(u(x_k)-g(x_k)\big) + \vphi(x_k) \cdot \big( \nabla u(x_k)- \nabla g(x_k) \big) . 
 \end{split}
\end{equation}        
We expose the details below.

\subsection{Strong imposition of boundary and admissibility conditions}\label{subsec:strong_b}
We follow ideas from \cite{Berg18,DeepFOSLS,dist} about the imposition of the boundary conditions and restrictions. Instead of trying to directly compute either $u$ or $\gamma$ and incorporate boundary conditions or restrictions by a penalization term, we shall enforce them in the construction of the neural network approximations. For that purpose, we make use of the following notion. 

\begin{definition}[surrogate distance function] \label{def:distance-function}
Let $\Gamma \subset \overline{\partial\Omega}$ be a closed set. We say that a Lipschitz continuous function $d_{\Gamma} \colon \overline{\Omega} \to \R$ is a {\em surrogate distance function} if there exist constants $c_1,c_2 \in \R_{>0}$ such that
$c_1 d_{\Gamma}(x) \leq dist(x,\Gamma) \leq c_2d_{\Gamma}(x)$ for all $x \in \overline{\Omega}$. 
\end{definition}
Notice that the function $dist(x,\Gamma)$ is itself a surrogate distance function. However, since an explicit expression for $dist(x,\Gamma)$ is seldom available, we prefer to state our results in terms of  a computable substitute $d_{\Gamma}(x)$.
Keeping that in mind, we briefly comment on the use of surrogate distance functions in the strong imposition of admissibility and Dirichlet boundary conditions at a continuous level. Consider a Lipschitz continuous function $a \colon \R \to [0,\infty)$ such that $a(t) = 0$ if $t = 0$. In the computation of $u$ in either \eqref{eq:def-calJ} or \eqref{eq:def-calL}, we restrict the class of functions to be
\begin{equation} \label{eq:def-u}
u (x) := g(x) + \dD(x) \, a( v(x) ),
\end{equation}
where the unknown is the function $v \colon \Omega \to \R$, $g$ is the obstacle, and $\dD$ is a surrogate distance function to $\partial \Omega$.
In a similar fashion, we can incorporate the condition $\lambda \geq 0$ by considering
\begin{equation} \label{eq:def-gamma}
\gamma (x) := \Div \vphi (x) + a(\eta(x)),
\end{equation}
and the new unknown is the function $\eta \colon \Omega \to \R$. Note that, in this setting, $a(\eta)$ plays the role of the Lagrange multiplier $\lambda$.
Therefore, we can seek $\vy = (v, \vpsi, \eta)$ such that the corresponding tuple $(u,\vphi,\gamma)$, with $u$ given by \eqref{eq:def-u}, $\vpsi = \vphi$, and $\gamma $ according to \eqref{eq:def-gamma}, minimizes the loss function $\calL$. The distinction in notation for the flux variable --employing $\vpsi$ rather than $\vphi$-- serves to differentiate its role within two distinct contexts. Specifically, $\vpsi$ denotes a component of the triple $(v, \vpsi, \eta)$, which belongs to an unconstrained function space, whereas the notation $(u, \vphi, \gamma)$ is reserved for admissible triples satisfying suitable constraints.
 
\subsection{Computational aspects}

In the following, we assume the user has provided a surrogate distance  function as in Definition \ref{def:distance-function}. For a comprehensive analysis on the construction of such functions, even with higher differentiability  properties than those required in this work, we refer to \cite{dist}. The surrogate distance  can also be approximated by a neural network, as described in \cite{DeepFOSLS}. We consider a set of random points $\{x_k\}_{k=1}^N \subset \Omega$ and aim to minimize the cost functional $L_N$ introduced in \eqref{eq:discrete_cost}. The trainable parameters $\vTheta$ emerge in the computation of the auxiliary functions $v$ and $\eta$ from the construction of $u$ and $\gamma$ (see \eqref{eq:def-u} and \eqref{eq:def-gamma}).

In broad terms, the method we propose can be summarized as follows. 
Until some stop criterion is reached, do:
\begin{itemize}
\item Sample random points $\{x_k\}_{k=1}^N \subset \Omega$.
\item Update parameters: 
$ 
\vTheta \leftarrow \vTheta - \ell \nabla_{\vTheta} L_N (u_\vTheta, \vphi_\vTheta,\gamma_{\vTheta}).
$
\item Adjust learning rate $\ell$.
\end{itemize}

The computation of $L_N(u, \vphi, \eta)$ requires evaluating the derivatives of $u$ and $\vphi$ with respect to the input variables at $ \{x_k\}_{k=1}^N$.
Since all the involved functions are smooth, these derivatives can be efficiently computed by using back propagation.

For the numerical examples, we implemented our algorithm within the PyTorch library. We utilized one- to three-layer neural networks with a SoftPlus activation function for the main variables $v$ and $\vpsi$. A piecewise constant neural network was used for the variable $\eta$ and a three-layer network with a ReLU activation function was employed for high-dimensional examples. The ADAM \cite{ADAM} optimization algorithm demonstrated good results in numerical experiments. Further details about the implementation of the method can be found in Section \ref{sec:numerical}.

\section{Convergence analysis}  \label{sec:analysis}

We consider a set of unconstrained neural network functions
\begin{equation} \label{eq:def-Cm} \begin{split}
	C_m := \Big\{ (v_\vTheta,\vpsi_\vTheta, \eta_\vTheta) : \ &  v_\vTheta: \R^d \to \R, \ \vpsi_\vTheta: \R^d \to \R^d, \ \eta_\vTheta: \R^d \to \R 
 \Big\},
\end{split} \end{equation}
where we collect all the parameters in a vector $\vTheta \in \R^m$. The choice of the neural network architecture is intentionally left ambiguous at this point. In what follows, we will assume certain approximation properties of this set and later focus on a specific instance of $C_m$ and a variant from it. 

Assuming that we are able to construct a surrogate auxiliary function $\dD$ as in Definition \ref{def:distance-function}, and considering a suitable choice of $a: \R \to [0,\infty)$, we define the set of discrete admissible functions
\begin{equation} \label{eq:admissible-class-W} \begin{split}
	W_m :=  \Big\{ \vp_{\vTheta}= (u_{\vTheta},\vphi_{\vTheta}, \gamma_\vTheta) : \ &  u_{\vTheta} = g + \dD a(v_{\vTheta}), \ \vphi_{\vTheta} = \vpsi_{\vTheta}, \\ & \gamma_\vTheta = \Div \vpsi_{\vTheta} + a(\eta_\vTheta), \quad (v_{\vTheta},\vpsi_{\vTheta}, \eta_\vTheta)\in C_m \Big\},
\end{split} 
\end{equation}
which we regard as a subset of $\calW = H^1_0(\Omega) \times  L^2(\Omega; \, \R^d) \times L^2(\Omega)$ and therefore furnish with the norm $\|(u_{\vTheta},\vphi_{\vTheta}, \gamma_\vTheta)\|_{\calW} = (\| \nabla u_{\vTheta} \|^2+ \| \vphi_{\vTheta} \|^2 + \| \gamma_\vTheta \|^2 )^\frac12$. 
We remark that the fulfillment of the boundary and admissibility conditions is guaranteed within the set $W_m$, in the sense that $u_\vTheta = 0$ whenever $\dD = 0$, and moreover $u_\vTheta \ge g$, and $\gamma_\vTheta - \Div \vphi_\vTheta \ge 0$.

\subsection{$\Gamma$-convergence}

We seek  to prove the convergence of the neural network approximations obtained through our method to the minimizers of the least-squares functional $\calL$ in \eqref{eq:def-calL}. To this end, we employ $\Gamma$-convergence theory, which guarantees that if a sequence of functionals $\Gamma$-converges and their minimizers converge, then solutions to the limit problem exist, along with the convergence of both minimum values and minimizers. Below, we provide a brief review of $\Gamma$-convergence and some key results relevant to this work. We refer to \cite{braides2006} for a comprehensive treatment of the subject.

\begin{definition}[sequential $\Gamma$-convergence] \label{def:gamma-convergence}
Let $X$ be a metric space and let $F_{n}$, $F: X \to \overline{\R}$, where $\overline{\R}:= [-\infty,+\infty]$. We say that $F_{n}$ $\Gamma$-converges to $F$ (and write $F_n  \xrightarrow[]{\Gamma} F$) if, for every $x \in X$ we have
	\begin{itemize}
		\item \emph{(lim-inf inequality)} for every sequence $\{x_n\}_{n \in \N} \subset X$ converging to $x$,
		\[
		F(x) \le \liminf_{n\to\infty} F_n(x_n) ; 
		\]
		
		\item \emph{(lim-sup inequality)} there exists a sequence $\{x_n\}_{n \in \N}$ converging to $x$ such that
		\[ F(x) \ge \limsup_{n\to\infty} F_n(x_n) .\]
	\end{itemize}
\end{definition}

\begin{definition}[equi-coercivity]\label{def:equicoercividad}
	Let $\{F_n\}_{n \in \N}$ be a sequence of functions $F_n: X \to \overline{\R}$. We say that $\{F_n\}$ is equi-coercive if for all $t \in \R$ there
	exists a compact set $K_t \subset X$ such that $\{F_n \le t\} \subset K_t$.
\end{definition}

Our primary motivation for employing $\Gamma$-convergence theory stems from the following result, which states that if a sequence of functionals is equi-coercive and $\Gamma$-converges, then any corresponding sequence of minimizers converges to a minimizer of the 
$\Gamma$-limit functional. In essence, $\Gamma$-convergence, together with an appropriate compactness condition, ensures the convergence of minimizers.

\begin{theorem}[fundamental theorem of $\Gamma$-convergence]\label{teo:fund_teo_gamma_conv}
	Let $(X, d)$ be a metric space,	$\{F_n\}_{n\in\N}$ be an equi-coercive sequence of functions on $X$, and $F$ be such that $F_n  \xrightarrow[]{\Gamma} F$. Then,
	$$\exists \min_{X} F = \lim_{n \to\infty} \inf_{X} F_n.$$
	Moreover, if $\{x_n\}_{n\in\N}$ is a precompact sequence in $X$ such that $\lim_{n\to\infty} F_n(x_n) = \lim_{n\to\infty} \inf_{X} F_n$, then every
	limit of a subsequence of $\{x_n\}$ is a minimum point for $F$. 
\end{theorem}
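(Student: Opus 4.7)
The plan is to combine the lim-sup inequality, the lim-inf inequality, and the equi-coercivity hypothesis in three short steps; the first two together will establish the existence of the minimum and the convergence of the infima, and the third will deliver the statement about subsequential limits. First, I would establish the upper bound $\limsup_{n} \inf_X F_n \le \inf_X F$. For any $x \in X$, the lim-sup inequality furnishes a recovery sequence $x_n \to x$ with $\limsup_n F_n(x_n) \le F(x)$; since $\inf_X F_n \le F_n(x_n)$, taking the limsup and then the infimum over $x$ yields the claim.

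Next, I would produce a minimizer and obtain the reverse bound at the same time. Set $\alpha := \liminf_n \inf_X F_n$. Along a subsequence $n_k$ with $\inf_X F_{n_k} \to \alpha$, pick almost-minimizers $x_{n_k}$ with $F_{n_k}(x_{n_k}) \le \inf_X F_{n_k} + 1/k$. When $\alpha < +\infty$, the bound $F_{n_k}(x_{n_k}) \le \alpha + 1$ holds for $k$ large, so equi-coercivity places the tail of $\{x_{n_k}\}$ in the fixed compact set $K_{\alpha + 1}$; extracting a convergent sub-subsequence $x_{n_k} \to x^*$ and applying the lim-inf inequality yields $F(x^*) \le \liminf_k F_{n_k}(x_{n_k}) = \alpha$. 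The degenerate case $\alpha = +\infty$ is immediate: the lim-sup inequality for any $x$ combined with $F_n(x_n) \ge \inf_X F_n \to +\infty$ forces $F \equiv +\infty$, so every point trivially minimizes $F$. In either case, chaining with the first step gives $F(x^*) \le \alpha \le \liminf_n \inf_X F_n \le \limsup_n \inf_X F_n \le \inf_X F \le F(x^*)$, so all the inequalities collapse to equalities and $\min_X F = \lim_n \inf_X F_n$.

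The second assertion then falls out with no new ideas: if $\{x_{n_k}\}$ is any convergent subsequence of a precompact $\{x_n\}$ with $\lim_n F_n(x_n) = \min_X F$, then its limit $x^*$ satisfies $F(x^*) \le \liminf_k F_{n_k}(x_{n_k}) = \min_X F$ by the lim-inf inequality, hence is a minimum point. The only delicate bookkeeping in the whole argument is matching the compactness level in Definition \ref{def:equicoercividad} to the liminf of the infima and separating off the case where these infima blow up; neither is a genuine obstacle, since the statement is classical (see, e.g., \cite{braides2006}) and amounts to carefully unwinding Definitions \ref{def:gamma-convergence} and \ref{def:equicoercividad}.
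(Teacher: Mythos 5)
The paper does not actually prove this theorem: it is quoted as a classical result with the reader referred to \cite{braides2006}, so there is no proof of the paper's own to compare against. Your argument is the standard one found in the literature (e.g., in Braides' book and in Dal Maso's) and it is correct: lim-sup inequality for the upper bound on $\limsup_n \inf_X F_n$, equi-coercivity plus almost-minimizers plus lim-inf inequality to produce a minimizer and the matching lower bound, with the degenerate case $\alpha = +\infty$ handled separately, and then the lim-inf inequality again for the statement about subsequential limits.

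One small point you gloss over: Definition~\ref{def:gamma-convergence} states the lim-inf inequality for full sequences $\{x_n\}_{n\in\N}$ converging to $x$, whereas in two places you apply it to a subsequence $\{x_{n_k}\}$ converging to $x^*$. This is legitimate, but it uses the (standard, easy) fact that the lim-inf inequality passes to subsequences: given $x_{n_k}\to x^*$, extend to a full sequence $\{z_n\}$ with $z_n = x^*$ for $n\notin\{n_k\}$ and $z_{n_k}=x_{n_k}$, note $z_n\to x^*$, and use $\liminf_n F_n(z_n)\le\liminf_k F_{n_k}(x_{n_k})$ (liminf along a subsequence dominates the full liminf) together with the full-sequence lim-inf inequality. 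If you want the argument to be fully self-contained against the definition as stated, this remark should be made explicit; otherwise the proof is complete.
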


\subsection{Convergence of the method}\label{sub:convergence}
  Our analysis is based on two generic hypotheses on the neural network functions. In Section \ref{sec:approximation-properties}, we describe specific instances of the set $C_m$ in which these assumptions are verified.

 \begin{assumption}  \label{ass:hypotheses12} We assume the following conditions. In first place, we state two conditions that we shall assume throughout the remainder of the paper.
 
 \begin{enumerate}[1)]
\item We recall that our method is based on the first-order formulation developed in Section \ref{sec:problem_descr}, which requires
$\Omega$ be a bounded, Lipschitz domain, $f \in L^2(\Omega),$  and $g \in H^1_0(\Omega)$.

\item
We additionally assume that the neural network functions in $C_m$ (cf. \eqref{eq:def-Cm}) are such that
\begin{itemize}
\item  the activation functions used for  $v_{\vTheta},\vphi_{\vTheta}$ are Lipchitz continuous;
\item the activation functions used for  $\eta_\vTheta$ are either bounded or Lipschitz continuous.
\end{itemize}
From these, to construct the classes $W_m$, we need to fix a surrogate auxiliary function $\dD$ (Definition \ref{def:distance-function}) and a Lipschitz continuous function $a: \R \to [0,\infty)$.
\end{enumerate}

Besides these two structural assumptions, we list below two conditions on the approximation capability and stability with respect to parameters of the neural network classes. We recall that the neural network class $W_m$ is deemed as a subset of $\calW = H^1_0(\Omega) \times  L^2(\Omega; \, \R^d) \times L^2(\Omega)$.

 \begin{enumerate}[1)] \setcounter{enumi}{2}
   \item Let $\vp_0 \in K^\calW$ be given by \eqref{eq:min-calL}. Then, 
$\vp_0$ can be approximated by the neural network spaces, that is, we assume that
\begin{equation}  \label{eq:hypothesis1} \tag{H1}
d(\vp_0,W_m) := \inf_{\vp_\vTheta \in W_m} \|\vp_0-\vp_\vTheta\|_\calW \to 0 \quad \mbox{as } m\to\infty.
\end{equation}

\item Let $R>0$ be fixed as in \eqref{eq:reg_func} below. We assume
\begin{equation}  \label{eq:hypothesisX} \tag{H2}
\mbox{The map $\overline{B(0,R)} \to W_m$ such that $\vTheta \mapsto \vp_\vTheta$ is continuous.}
\end{equation}
  \end{enumerate}
  \end{assumption}
  
The first two conditions are essential for our analysis, while in Section \ref{sec:approximation-properties}, we give examples of constructions of the set $C_m$ in which the latter two assumptions are verified.

Regularization plays an instrumental role in neural network approximations: one trades a small decrease in training accuracy for better generalization, thus reducing overfitting.
In the context of numerical PDEs, regularization avoids convergence towards non-physical (in our setting, non-variational), almost everywhere solutions \cite{DeepFOSLS}. Given $R>0$, that shall remain fixed, we consider $L: \R^m \to \R$ as
\begin{equation}\label{eq:reg_func}
	L(\vTheta; R) := 
	\left\lbrace
	\begin{array}{ll}
		\calL( \vp_{\vTheta}) & \mbox{if } |\vTheta| \le R, \\
		+\infty & \mbox{otherwise. } \\
	\end{array}
	\right.
\end{equation}

On the other hand, we recall that in the admissible class $W_m$ we have the representation $u_{\vTheta} = g + \dD a(v_{\vTheta}),$ $\vpsi_{\vTheta} = \vphi_{\vTheta}$,  $\gamma_\vTheta = \Div \vphi_{\vTheta} + a(\eta_\vTheta)$ with $(v_{\vTheta},\vpsi_{\vTheta},\eta_\vTheta) \in C_m$, so that we can rewrite the discrete loss functional \eqref{eq:discrete_cost} as
\begin{equation}\label{eq:discrete_cost-L}
    L_N (\vTheta; f,g) =  \frac{|\Omega|}{N} 
	\sum_{k=1}^N  \sum_{i=1}^4 G_i(\vTheta, x_k) ,
\end{equation}
 with $G_1, G_2, G_3, G_4: \R^m \times \Omega \to \R$ given by
\begin{equation}\label{def:func_aux} \begin{split}
 & G_1(\vTheta,x):=|\Div\vpsi_{\vTheta}(x) + a(\eta_\vTheta(x)) + f(x) |^2, \\
  & G_2(\vTheta,x):=| \nabla g(x) + \nabla (\dD a(v_{\vTheta}))(x) - \vpsi_{\vTheta}(x)|^2, \\ 
  & G_3(\vTheta,x):= \left[ \Div \vpsi_\vTheta (x) + a(\eta_\vTheta(x)) \right] \dD(x) a(v_{\vTheta}(x)), \\ &
  G_4(\vTheta,x) := \vpsi_\vTheta (x) \cdot  \nabla (\dD a(v_{\vTheta}))(x).
\end{split}	\end{equation}
The four functions $G_i$ correspond to the four terms in the sum \eqref{eq:discrete_cost} upon the correspondence $u \mapsto g + \dD a (v_{\vTheta})$, $\vphi \mapsto \vpsi_\vTheta$, $\gamma \mapsto \Div \vpsi_{\vTheta} + a (\eta_{\vTheta})$.

We split the proof of convergence of our method into several steps.
We start by proving the following auxiliary lemma, that shows certain boundedness properties of the functions in \eqref{def:func_aux} with respect to the neural network parameters. In the following, we restrict the domain of all neural network functions to be $\Omega$.

\begin{lemma}[boundedness] \label{lem:mayorante_integrable}
Let $G_1, \ldots, G_4$ be defined as in \eqref{def:func_aux}, with discrete admissible functions given by \eqref{eq:admissible-class-W}, and fix $R>0$ according to \eqref{eq:reg_func}. 
Then, under Assumption \ref{ass:hypotheses12}, it holds that $G_3, G_4 \in L^{\infty}(B(0,R) \times \Omega)$. Moreover, there exist $s_1, s_2 \in L^1(\Omega)$, depending on $R$, such that $|G_i(\mathbf{\Theta}, x)| \leq s_i(x)$ for all $(\mathbf{\Theta}, x) \in \overline{B(0,R)} \times \Omega$ and $i = 1,2$.
\end{lemma}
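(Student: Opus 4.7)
The plan is to reduce everything to uniform $L^\infty$ bounds on the neural-network components $v_\vTheta, \vpsi_\vTheta, \eta_\vTheta$ and their a.e.\ spatial derivatives valid for every $(\vTheta,x) \in \overline{B(0,R)}\times\Omega$; once those are in hand, the four claimed bounds on $G_1,\dots,G_4$ come out by elementary manipulations and the inclusions $f\in L^2(\Omega)$, $g\in H^1_0(\Omega)$, $\dD\in W^{1,\infty}(\Omega)$, $a$ Lipschitz with $a(0)=0$.

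The first step is to observe that a neural network whose activations are Lipschitz is Lipschitz in $x$ with a Lipschitz constant that depends polynomially (and hence continuously) on the parameter vector. Compactness of $\overline{B(0,R)}$ and boundedness of $\Omega$ then yield a constant $M=M(R,\Omega)$ with
\[
\|v_\vTheta\|_{L^\infty(\Omega)}+\|\nabla v_\vTheta\|_{L^\infty(\Omega)}+\|\vpsi_\vTheta\|_{L^\infty(\Omega)}+\|\Div\vpsi_\vTheta\|_{L^\infty(\Omega)} \le M
\]
uniformly in $\vTheta\in\overline{B(0,R)}$. The same argument applies to $\eta_\vTheta$: if its activations are Lipschitz the previous bound gives $\|\eta_\vTheta\|_{L^\infty(\Omega)}\le M$, while if they are bounded the last-layer output is bounded by a linear expression in $\vTheta$ and hence uniformly on the parameter ball. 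Combining this with $\dD\in W^{1,\infty}(\Omega)$, with the Lipschitz continuity of $a$ (so $|a(t)|\le L_a |t|$ and $|a'|\le L_a$ a.e.), and with the product rule
\[
\nabla(\dD\, a(v_\vTheta)) \;=\; a(v_\vTheta)\,\nabla \dD \;+\; \dD\, a'(v_\vTheta)\,\nabla v_\vTheta,
\]
one obtains a further constant $C = C(R,\Omega,a,\dD)$ with $\|a(v_\vTheta)\|_\infty+\|a(\eta_\vTheta)\|_\infty+\|\nabla(\dD a(v_\vTheta))\|_\infty \le C$ on $\overline{B(0,R)}\times\Omega$.

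With these uniform bounds, $G_3$ and $G_4$ are products of factors each bounded uniformly in $(\vTheta,x)$, so $G_3,G_4\in L^\infty(\overline{B(0,R)}\times\Omega)$. For $G_1$ the inequality $(A+B)^2\le 2A^2+2B^2$ gives
\[
|G_1(\vTheta,x)| \;\le\; 2\bigl(M + L_a M\bigr)^{2} + 2 f(x)^{2},
\]
so taking $s_1(x):=2(M+L_aM)^2+2f(x)^2$ provides an $L^1(\Omega)$-majorant, since $f\in L^2(\Omega)$ and $\Omega$ is bounded. For $G_2$ one expands similarly, dominates $|\nabla(\dD a(v_\vTheta))-\vpsi_\vTheta|$ by a constant $C'(R)$, and sets $s_2(x):=3|\nabla g(x)|^2+3 C'(R)^2$, which lies in $L^1(\Omega)$ because $g\in H^1_0(\Omega)$ and $\Omega$ is bounded.

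The only non-routine point is the uniform-in-$\vTheta$ $W^{1,\infty}(\Omega)$ control of the network outputs. This is a standard but crucial consequence of the layer-by-layer structure of neural networks with Lipschitz activations: the forward map and its $x$-Jacobian are both polynomial in the weights, and continuity of these polynomials on the compact ball $\overline{B(0,R)}$ delivers the required uniform bounds. Everything else in the lemma is an immediate book-keeping exercise once this structural estimate is established.
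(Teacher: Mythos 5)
Your proof is correct and follows essentially the same route as the paper: establish uniform $L^\infty$ bounds on the network outputs and their spatial derivatives over $\overline{B(0,R)}\times\Omega$ (the paper phrases this via Lipschitz/bounded auxiliary maps $\mathscr{F},\mathscr{G},\mathscr{H}$; you phrase it via polynomial dependence of the forward map and Jacobian on the weights plus compactness of the parameter ball), then combine with the Lipschitz property of $a$, the $W^{1,\infty}$ control of $\dD$, and the integrability of $|f|^2$ and $|\nabla g|^2$. The only differences are cosmetic — you spell out the product-rule split for $\nabla(\dD\,a(v_\vTheta))$ and treat the bounded-activation case for $\eta_\vTheta$ explicitly, while the paper leaves both implicit.
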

\begin{proof} Consider a triple $(v_{\vTheta},\vpsi_{\vTheta},\eta_\vTheta) \in C_m$, where $C_m$ is the set of unconstrained neural network functions, cf. \eqref{eq:def-Cm}. We begin by considering the map $ \FF \colon \R^m \times \Omega \to \R $, given by $\FF(\vTheta, x) := v_{\vTheta}(x) $.
Under Assumption \ref{ass:hypotheses12}, namely that $ v_{\vTheta} $ possesses Lipschitz continuous activation functions, it follows that $\FF$ itself is Lipschitz continuous. Consequently, $\FF$ is bounded on $ {B(0,R)} \times \Omega $, and its (weak) derivatives are likewise essentially bounded on this domain. Analogously, introducing $\GG \colon \R^m \times \Omega \to \R^m $ via $\GG(\vTheta, x) := \vpsi_{\vTheta}(x) $, we deduce that $\GG$ exhibits the same boundedness properties, with essentially bounded derivatives over ${B(0,R)} \times \Omega$.  Furthermore, considering $\HH \colon \R^m \times \Omega \to \R $ given by $\HH(\vTheta,x) := \Div\vpsi_{\vTheta}(x) + a(\eta_\vTheta(x)) $, the condition on $\eta_\vTheta$ from Assumption \ref{ass:hypotheses12} yields its essential boundedness on $ {B(0,R)} \times \Omega $. 

The function $G_4$ can be constructed using $\FF$ and $\GG$. Given that $\dD$ is a surrogate distance  function and $a$ is Lipschitz continuous, we immediately deduce the essential boundedness of $G_4$ on $ {B(0,R)} \times \Omega $. Similarly, because $G_3(\vTheta,x) = \HH(\vTheta,x) \dD(x) a(\FF(\vTheta,x))$, the boundedness of $G_3$ over the same domain follows. 

Next, the relation $G_1(\vTheta,x) = |\HH(\vTheta,x) + f(x)|^2$ yields the estimate
$$G_1(\vTheta,x) \le 2|\HH(\vTheta,x)|^2 + 2|f(x)|^2 \leq 2M + 2|f(x)|^2 =: s_1(x),$$ for some constant $M > 0$. 
Thus, the inclusion $s_1 \in L^1(\Omega)$ follows from our assumption $f \in L^2(\Omega)$. A similar argument allows us to conclude that there exists $s_2 \in L^1(\Omega)$ satisfying $G_2(\vTheta,x) \leq s_2(x)$ for all $(\vTheta,x) \in {B(0,R)} \times \Omega$.  
\end{proof}

Our goal is to approximate the minimizer $\vp_0$ of $\mathcal{L}$ in $K^\mathcal{W}$, which corresponds to the solution of the obstacle problem, using minimizers of the same functional over the set of admissible neural network functions $W_m$. However, in practice, the exact evaluation of $\calL(\vp_\vTheta)$ is unattainable due to the reliance on numerical integration. Consequently, instead of approximating $\vp_0$ through the minimizers of $\calL$ in $W_m$, the following lemma targets accounting for the error introduced in the computation of the neural network minimizers.

\begin{lemma}[approximation with $W_m$]\label{lemma:aprox-inf}
For every $m \in \N$, we define the set of neural network quasi-minimizers 
\[
\calI_m := \{ \vp_{\vTheta_m} \in W_m : \calL(\vp_{\vTheta_m}) \leq \calL(\vp_{\vTheta}) + 1/m \quad \forall \vp_{\vTheta} \in W_m \}. 
\]
Then, under Assumption \ref{ass:hypotheses12}, if $\vp_0 \in K^\calW$ is the unique minimizer of $\calL$ in $K^\calW$, we have 
\[
\sup_{\vp_{\vTheta_m} \in \calI_m}\|\vp_{\vTheta_m} - \vp_0\|_\calW \to 0
\quad  \mbox{as } m \to \infty.
\]     
\end{lemma}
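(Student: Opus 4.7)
The plan is to chain together three ingredients: the approximation hypothesis \eqref{eq:hypothesis1}, the local Lipschitz continuity of $\calL$ at $\vp_0$ (Proposition \ref{prop:continuity}), and the coercivity of $\calL$ on $K^\calW$ (Proposition \ref{prop:coercivity}). A crucial preliminary observation is that the construction \eqref{eq:admissible-class-W} yields $W_m \subset K^\calW$: indeed, $u_\vTheta = g + \dD a(v_\vTheta) \ge g$ and $\gamma_\vTheta - \Div \vphi_\vTheta = a(\eta_\vTheta) \ge 0$, so both admissibility conditions in \eqref{eq:def-kcalW} are satisfied. Consequently, both propositions from Section~\ref{sec:problem_descr} can be applied to any element of $W_m$.

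First, I would record that $\calL(\vp_0) = 0$, which follows by evaluating Proposition \ref{prop:continuity} at $\vp = \vp_0$ (yielding $\calL(\vp_0) \le 0$) and combining it with the nonnegativity implied by Proposition \ref{prop:coercivity}.

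Next, using \eqref{eq:hypothesis1}, for each $m \in \N$ I would pick some $\vp_m \in W_m$ satisfying $\|\vp_m - \vp_0\|_\calW \le d(\vp_0, W_m) + 1/m$, so that $\|\vp_m - \vp_0\|_\calW \to 0$. Proposition \ref{prop:continuity} then gives
\[
\calL(\vp_m) \le C(1 + \|\vp_m - \vp_0\|_\calW)\, \|\vp_m - \vp_0\|_\calW \longrightarrow 0.
\]
By the definition of the quasi-minimizer set $\calI_m$, every $\vp_{\vTheta_m} \in \calI_m$ obeys the uniform bound $\calL(\vp_{\vTheta_m}) \le \calL(\vp_m) + 1/m$. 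Applying Proposition \ref{prop:coercivity} to $\vp_{\vTheta_m} \in K^\calW$ yields
\[
\|\vp_{\vTheta_m} - \vp_0\|_\calW^2 \le \frac{1}{C}\calL(\vp_{\vTheta_m}) \le \frac{1}{C}\bigl(\calL(\vp_m) + 1/m\bigr),
\]
and since the right-hand side does not depend on the particular choice of $\vp_{\vTheta_m} \in \calI_m$, taking the supremum over $\calI_m$ delivers the stated conclusion.

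I do not anticipate a substantial technical obstacle; the argument is a direct coercivity-continuity sandwich. The most delicate point is simply to recognize that the use of the quasi-minimizer set allows one to sidestep existence of exact minimizers of $\calL$ over $W_m$ (which need not hold since $W_m$ is not necessarily closed) while still providing the uniform control across $\calI_m$ required by the supremum in the statement.
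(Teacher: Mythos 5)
Your proof is correct and follows essentially the same coercivity--continuity sandwich argument as the paper: pick near-best approximants $\vp_m \in W_m$ using \eqref{eq:hypothesis1}, bound $\calL(\vp_m)$ via Proposition~\ref{prop:continuity}, transfer this bound to all quasi-minimizers via the defining inequality of $\calI_m$, and convert back to a norm bound via Proposition~\ref{prop:coercivity}. Your explicit remark that $W_m \subset K^\calW$ (so that both propositions apply on $W_m$) and your explanation of why quasi-minimizers are used in place of exact minimizers are both worthwhile observations that the paper leaves implicit.
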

\begin{proof}
Let $\vp_{\vTheta} \in W_m$ be arbitrary and $\vp_0 \in K^\calW$ be the minimizer of $\calL$.
From Propositions \ref{prop:coercivity} and \ref{prop:continuity}, we have
\begin{equation}\label{eq:funcional_eliptico}
\alpha   \| \vp_{\vTheta} - \vp_0 \|_\calW^2 \le   \calL(\vp_{\vTheta}; f,g) \le \beta (1 + \| \vp_{\vTheta} - \vp_0 \|_\calW) \ \| \vp_{\vTheta} - \vp_0 \|_\calW 
	\end{equation}
for some positive constants $\alpha, \beta$.
 
Let $\eps > 0$. By assumption \eqref{eq:hypothesis1}, we consider $m_0 > 0$ such that $d(\vp_0,W_m)<\eps$ and $1/m < \eps$ for all $m>m_0$. For every $m > m_0$, there exists $\vp_{\vTheta^*}\in W_{m}$ with $d(\vp_0, W_{m}) \ge \|\vp_{\vTheta^*}-\vp_0\|_{\calW} - \eps$, namely, $\|\vp_{\vTheta^*}-\vp_0\|_{\calW} < 2\eps$. Then, for all $m>m_0$ and every neural network quasi-minimizer $\vp_{\vTheta_m} \in \calI_m$, 
we combine the bounds in \eqref{eq:funcional_eliptico} to obtain
\[ \begin{split}
\alpha   \| \vp_{\vTheta_m} - \vp_0 \|_\calW^2 & \le \calL(\vp_{\vTheta_m}; f,g)  \\ & \le \calL(\vp_{\vTheta^*}; f,g) + \eps \le \beta  (1 + \| \vp_{\vTheta^*} - \vp_0 \|_\calW) \ \| \vp_{\vTheta^*} - \vp_0 \|_\calW + \eps < 2\beta (1 + 2\eps) \eps + \eps.
\end{split} \]
We have thus shown that for every $\eps >0$ there exists $m_0>0$ such that, for all $m>m_0$, if $\vp_{\vTheta_m} \in \calI_m$ then
\[
\| \vp_{\vTheta_m} - \vp_0 \|_\calW \le C \sqrt{\eps},
\]
and thereby the proof is concluded.
\end{proof}

The previous result is of theoretical interest, as it involves the continuous functional $\calL$. In practice, we shall make use of Monte Carlo integration and deal instead with a regularized version of \eqref{eq:discrete_cost-L}. More precisely, let $\{X_i\}_{i \in \mathbb{N}}$ be an i.i.d. sequence of random variables  on a probability space $(\Xi,\Sigma,P)$ with $X_i:\Xi \to \Omega$ $\forall i \in \mathbb{N}$, with a uniform probability density on $\Omega$. Given $\xi \in \Xi$, $R>0$ as in \eqref{eq:reg_func}, and $N \in \N$ we set $V_N(\xi) := \cup_{i \le N} \{X_i(\xi)\} $, and introduce the regularized discrete functional $L_{\xi,N}: \R^m \to \overline\R$,
\begin{equation}\label{eq:reg_func_dis}
	L_{\xi,N}(\vTheta) := 
	\left\lbrace
	\begin{aligned}
		& \frac{|\Omega|}{N}\sum_{ x \in V_N(\xi) } \sum_{i=1}^4 G_i(\vTheta,x) & \mbox{ if } |\vTheta| \le R, \\
		& +\infty & \mbox{otherwise, } \\
	\end{aligned}
	\right.
\end{equation}
where $G_1, \ldots , G_4$ are given by \eqref{def:func_aux}. 

We aim to prove the almost sure $\Gamma$-convergence of the sequence of functionals $\{L_{\xi,N}\}_{N \in \N}$ towards $L$ as the number of quadrature points $N$ tends to infinity. An auxiliary tool for this purpose is the fact that such a convergence holds in a pointwise sense.

\begin{lemma}[almost sure pointwise convergence]\label{lemma:conv_puntual}
Let $R>0$, and $L$ be defined by \eqref{eq:reg_func}. Additionally,  as in \eqref{eq:reg_func_dis}, consider the functional $L_{\xi,N}$ and $\{X_i\}_{i \in \mathbb{N}}$ an i.i.d. family of random variables on the probability space $(\Xi,\Sigma,P)$. Then, for all $\vTheta \in \R^m$, it holds that $L_{\xi,N}(\vTheta) \to L(\vTheta)$ as $N \to \infty$ $P$-almost surely. 
\end{lemma}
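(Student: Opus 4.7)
The plan is to reduce this pointwise convergence statement to an application of Kolmogorov's strong law of large numbers, using the integrability information already established in Lemma \ref{lem:mayorante_integrable}.

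First I would dispose of the trivial case $|\vTheta|>R$: by the very definitions \eqref{eq:reg_func} and \eqref{eq:reg_func_dis}, both $L(\vTheta)$ and $L_{\xi,N}(\vTheta)$ equal $+\infty$, so there is nothing to prove. Henceforth we fix $\vTheta \in \overline{B(0,R)}$.

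Next I would observe that since $\{X_i\}_{i\in\N}$ is i.i.d. with uniform density $1/|\Omega|$ on $\Omega$, for each $i = 1,\ldots,4$ the random variables $Y_k^{(i)} := G_i(\vTheta, X_k)$ are i.i.d.\ with
\[
E[Y_k^{(i)}] = \frac{1}{|\Omega|}\int_\Omega G_i(\vTheta, x)\,dx.
\]
The integrability $G_i(\vTheta,\cdot) \in L^1(\Omega)$ follows directly from Lemma \ref{lem:mayorante_integrable}: for $i=1,2$ we have the pointwise bound $|G_i(\vTheta,x)| \le s_i(x)$ with $s_i \in L^1(\Omega)$, while for $i=3,4$ the essential boundedness on $\overline{B(0,R)}\times\Omega$ combined with $|\Omega|<\infty$ yields $L^1(\Omega)$-integrability. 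Hence $E[|Y_k^{(i)}|]<\infty$ for each $i$, and Kolmogorov's strong law of large numbers gives
\[
\frac{1}{N}\sum_{k=1}^N G_i(\vTheta, X_k(\xi)) \;\to\; \frac{1}{|\Omega|}\int_\Omega G_i(\vTheta,x)\,dx \qquad P\text{-a.s.}
\]
for each $i \in \{1,2,3,4\}$. Multiplying by $|\Omega|$ and summing over $i$, one obtains $P$-almost sure convergence
\[
\frac{|\Omega|}{N}\sum_{k=1}^N\sum_{i=1}^4 G_i(\vTheta, X_k(\xi)) \;\to\; \sum_{i=1}^4 \int_\Omega G_i(\vTheta,x)\,dx.
\]

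Finally I would identify the limit with $L(\vTheta) = \calL(\vp_\vTheta)$. By construction \eqref{def:func_aux} of the $G_i$, the four integrals $\int_\Omega G_i(\vTheta,x)\,dx$ recover exactly the four terms appearing in the definition \eqref{eq:def-calL} of $\calL$ evaluated at $\vp_\vTheta \in W_m$, namely $\|\gamma_\vTheta + f\|^2$, $\|\nabla u_\vTheta - \vphi_\vTheta\|^2$, $(\gamma_\vTheta, u_\vTheta - g)$, and $(\vphi_\vTheta, \nabla(u_\vTheta - g))$. Taking the intersection of the four full-measure sets on which the individual SLLN convergences hold yields a single full-measure event on which $L_{\xi,N}(\vTheta) \to L(\vTheta)$, which is the desired conclusion. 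There is no genuine obstacle here, the only non-trivial ingredient being the integrable majorants provided by Lemma \ref{lem:mayorante_integrable}, which ensure the hypotheses of the strong law are met uniformly over $\overline{B(0,R)}$.
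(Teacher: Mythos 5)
Your argument is correct and follows essentially the same route as the paper: dispose of the case $|\vTheta|>R$ by definition, then apply the strong law of large numbers term-by-term to each $G_i(\vTheta,\cdot)$ and sum. The only difference is that you spell out the $L^1$-integrability of $G_i(\vTheta,\cdot)$ via Lemma~\ref{lem:mayorante_integrable} (which the paper leaves implicit) and make the identification of the limit with $\calL(\vp_\vTheta)$ explicit, both of which are welcome clarifications but not a different proof.
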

\begin{proof}
Because the parameter $R>0$ in the definitions of
 of $L$ and $L_{\xi,N}$ is the same, we have $L(\vTheta) = L_{\xi,N}(\vTheta) = +\infty$ whenever $|\vTheta| > R$. Therefore, there is nothing to be proven in that case and we can assume $|\vTheta| \le R$. Recalling $V_N(\xi) = \cup_{i \le N} \{X_i(\xi)\} $ with $\xi \in \Xi$, an application of the strong law of the large numbers gives
 \[
\frac{|\Omega|}{N}\sum_{ x \in V_N(\xi) } G_i(\vTheta,x)  \xrightarrow[N \to \infty]{a.s.} \int_{\Omega} G_i(\vTheta,x) \, dx .
 \]
The claim follows immediately.
\end{proof}

According to Definition \ref{def:gamma-convergence}, to prove the almost sure $\Gamma$-convergence of $\{L_{\xi,N}\}$ towards $L$ as $N \to \infty$, it suffices to show that, for every $\vTheta \in \R^m$,  the lim-inf inequality holds and that one can construct a recovery sequence satisfying the lim-sup inequality. Our argument follows the same steps as in \cite[Theorem 3.2]{DeepFOSLS}; we outline the proof for clarity.

\begin{theorem}[almost sure $\Gamma$-convergence]\label{teo:gamma_conv}
Let $R>0$, $L$ be as in \eqref{eq:reg_func}, and $L_{\xi,N}$ and $\{X_i\}_{i \in \mathbb{N}}$ be an i.i.d. family of random variables defined in the probability space $(\Xi,\Sigma,P)$ as in \eqref{eq:reg_func_dis}. Then, under Assumption \ref{ass:hypotheses12}, it holds that $L_{\xi,N} \xrightarrow[]{\Gamma} L$ as $N \to \infty$ $P$-almost surely.
\end{theorem}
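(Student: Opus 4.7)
My plan is to verify the two inequalities of Definition \ref{def:gamma-convergence} at every $\vTheta \in \R^m$, using as main ingredients the almost-sure pointwise convergence $L_{\xi,N}(\vTheta) \to L(\vTheta)$ of Lemma \ref{lemma:conv_puntual}, the integrable envelopes of Lemma \ref{lem:mayorante_integrable}, and the continuity of the parametrization $\vTheta \mapsto \vp_\vTheta$ granted by \eqref{eq:hypothesisX}. The overall strategy mirrors that of \cite[Theorem 3.2]{DeepFOSLS}.

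For the lim-sup inequality, the natural recovery sequence is the constant one $\vTheta_N = \vTheta$. Lemma \ref{lemma:conv_puntual} immediately yields $\limsup_N L_{\xi,N}(\vTheta) = L(\vTheta)$ outside a $P$-null set that a priori depends on $\vTheta$. To obtain an exceptional set independent of $\vTheta$, I would first fix a countable dense subset $D \subset \R^m$, apply Lemma \ref{lemma:conv_puntual} pointwise on $D$ to get almost-sure convergence simultaneously over $D$ (a countable union of null sets), and then transfer to arbitrary $\vTheta \in \overline{B(0,R)}$ using continuity of $L$ on $\overline{B(0,R)}$, which follows from \eqref{eq:hypothesisX} together with dominated convergence based on Lemma \ref{lem:mayorante_integrable}.

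For the lim-inf inequality, take any $\vTheta_N \to \vTheta$. If $|\vTheta| > R$, then $L(\vTheta) = +\infty$; since eventually $|\vTheta_N| > R$, also $L_{\xi,N}(\vTheta_N) = +\infty$, so the inequality is immediate. Otherwise $|\vTheta| \le R$, and one may assume $|\vTheta_N| \le R$ eventually, as otherwise $\liminf L_{\xi,N}(\vTheta_N) = +\infty$. Decompose
\[
L_{\xi,N}(\vTheta_N) = L_{\xi,N}(\vTheta) + \bigl[L_{\xi,N}(\vTheta_N) - L_{\xi,N}(\vTheta)\bigr].
\]
The first summand converges almost surely to $L(\vTheta)$ by Lemma \ref{lemma:conv_puntual}. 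For the bracket, the key is a uniform strong law of large numbers on the compact set $\overline{B(0,R)}$: the integrable domination of the $G_i$ from Lemma \ref{lem:mayorante_integrable}, together with the continuity of $\vTheta \mapsto G_i(\vTheta,\cdot)$ that follows from \eqref{eq:hypothesisX}, place us in a Glivenko--Cantelli-type regime that yields $\sup_{|\vTheta|\le R} |L_{\xi,N}(\vTheta) - L(\vTheta)| \to 0$ almost surely. Combined with the continuity of $L$ on $\overline{B(0,R)}$, this forces the bracket to vanish and the lim-inf inequality to hold, in fact with equality along $\vTheta_N \to \vTheta$ inside the ball.

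The principal obstacle is exactly this upgrade from pointwise a.s. convergence (Lemma \ref{lemma:conv_puntual}) to a uniform-in-$\vTheta$ statement outside a single null set of $\xi$'s, so that the same exceptional set serves for every $\vTheta$ and every convergent sequence $\vTheta_N \to \vTheta$. The domination in Lemma \ref{lem:mayorante_integrable} and the continuity in \eqref{eq:hypothesisX} have been designed precisely to trigger such a uniform law of large numbers on the compact parameter set $\overline{B(0,R)}$.
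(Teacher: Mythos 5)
Your proof follows the same key strategy as the paper's: the trivial constant recovery sequence with Lemma~\ref{lemma:conv_puntual} for the lim-sup, and the integrable envelopes of Lemma~\ref{lem:mayorante_integrable} together with the parameter continuity~\eqref{eq:hypothesisX} for the lim-inf. The one genuine variation is how the bracket $L_{\xi,N}(\vTheta_N)-L_{\xi,N}(\vTheta)$ is disposed of. The paper argues locally: it uses~\eqref{eq:hypothesisX} to obtain $G_i(\vTheta_N,x)\to G_i(\vTheta,x)$ a.e.\ (up to subsequences), dominates by $s_i\in L^1(\Omega)$, and defers the assembly of these facts to \cite[Theorem~3.2]{DeepFOSLS} --- essentially a dominated-convergence argument controlling the oscillation of the $G_i$ only on a shrinking neighborhood of the target $\vTheta$. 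You instead invoke the global uniform strong law of large numbers $\sup_{|\vTheta|\le R}|L_{\xi,N}(\vTheta)-L(\vTheta)|\to 0$ a.s., which is stronger than strictly needed and has the fringe benefit of simultaneously disposing of the $\vTheta$-dependent null set in the lim-sup inequality --- an issue you correctly flag and which the paper glosses over. The caveat for both routes is the same: a clean Glivenko--Cantelli or dominated-convergence argument wants pointwise-in-$x$ continuity of $\vTheta\mapsto G_i(\vTheta,x)$ (or an integrable modulus of continuity), whereas~\eqref{eq:hypothesisX} only gives continuity of $\vTheta\mapsto\vp_\vTheta$ in the $\calW$-norm, hence $L^1(\Omega)$-continuity of $\vTheta\mapsto G_i(\vTheta,\cdot)$, and the paper's own intermediate claim is likewise only a.e.\ convergence up to subsequences. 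For the concrete network classes of Section~\ref{sec:approximation-properties} the required pointwise continuity does hold, so either version can be completed there, but at the abstract level of Assumption~\ref{ass:hypotheses12} both your proposal and the paper's sketch rely on the same unexamined step.
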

\begin{proof}
Given $\vTheta \in \R^m$, we consider the trivial recovery sequence $\{\vTheta_N\}_{N \in \N} \subset \R^m$, $\vTheta_N \equiv \vTheta$. Then, Lemma \ref{lemma:conv_puntual} yields the $P$-almost sure lim-sup inequality.

As for the lim-inf inequality, we first observe that, by hypothesis \eqref{eq:hypothesisX}, if $\vTheta_N \to \vTheta$ then $\vp_{\vTheta_N} \to \vp_\vTheta$ in the norm $\| \cdot \|_{\calW}$. This, combined with the Poincaré inequality and the boundedness of $\Omega$, implies $G_i(\vTheta_{N},x) \to G_i(\vTheta,x)$ almost everywhere (up to a subsequence) in $\Omega$ ($i \in \{1,\ldots,4\}$) for the functions defined in \eqref{def:func_aux}. Additionally, from Lemma \ref{lem:mayorante_integrable}, we can bound $G_i(\vTheta_{N},x) \leq s_i(x)$, for some $s_i \in L^1(\Omega)$,  $i \in \{1,\ldots,4\}$. By combining this with Lemma \ref{lemma:conv_puntual}, we can readily reproduce the arguments presented in \cite[Theorem 3.2]{DeepFOSLS}.
\end{proof} 

Finally, we can combine the previous $\Gamma$-convergence result with a suitable form of equi-coercivity to conclude the convergence of minimizers. We emphasize that the following theorem does not assume that one is able to compute the minimum of the discrete loss functional, but instead that one can asymptotically recover such a minimizer as the number of quadrature points tends to infinity.

\begin{theorem}[convergence] \label{teo:conv} Let Assumption \ref{ass:hypotheses12} hold and, moreover, assume that, given $m \in \N$ and $R>0$, we can construct a sequence $\{\vTheta_N\}_{N \in \N} \subset \overline{B(0,R)} \subset \R^m$ such that  $\lim_{N \to \infty} L_{\xi,N}(\vTheta_N) = \lim_{N \to \infty} \inf_{\vTheta \in \R^m} L_{\xi,N}(\vTheta)$, with $L_{\xi,N}$ defined as in \eqref{eq:reg_func_dis}. Let $\vp_0 = \arg \min_{\vp \in K^\calW} \calL(\vp)$. Then, for every $\eps>0$, there $P$-almost surely exist $m_0=m_0(\eps) \in \N$, $R=R(m_0)>0$ and $N_0 = N_0(m_0) \in \N$ such that, if one constructs a sequence $\{\vTheta_N\}_{N \in \N}$ as above, then  
\[\|\vp_{N} - \vp_0 \|_{\calW} \le \eps \quad \mbox{for all } N>N_0,
\]
where $\vp_N:= (u_{\vTheta_N} , \vphi_{\vTheta_N}, \gamma_{\vTheta_N}) \in W_{m_0}$ is the neural network function defined by the parameters $\vTheta_N$.
\end{theorem}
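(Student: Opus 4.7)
The plan is to combine Lemma~\ref{lemma:aprox-inf} with the fundamental theorem of $\Gamma$-convergence (Theorem~\ref{teo:fund_teo_gamma_conv}), using Theorem~\ref{teo:gamma_conv} to supply the $\Gamma$-convergence $L_{\xi,N} \xrightarrow{\Gamma} L$ and exploiting the fact that both functionals equal $+\infty$ outside $\overline{B(0,R)}$ to obtain equi-coercivity. Hypothesis~\eqref{eq:hypothesisX} will then transfer parameter-space convergence to convergence in $\calW$.

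First, given $\eps>0$, I would apply Lemma~\ref{lemma:aprox-inf} to select $m_0=m_0(\eps)$ large enough that $\sup_{\vp_\vTheta \in \calI_{m_0}} \|\vp_\vTheta - \vp_0\|_\calW < \eps/2$. Since $\calI_{m_0}$ is non-empty, I pick some $\vp_{\vTheta^\star} \in \calI_{m_0}$ and set $R=R(m_0):=|\vTheta^\star|+1$, so $\vTheta^\star$ lies strictly inside $\overline{B(0,R)}$. Because $L$ from \eqref{eq:reg_func} and every $L_{\xi,N}$ from \eqref{eq:reg_func_dis} take the value $+\infty$ outside the compact set $\overline{B(0,R)}$, all their sublevel sets are contained in this fixed compact set, so the family $\{L_{\xi,N}\}_N$ is equi-coercive in the sense of Definition~\ref{def:equicoercividad}. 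Combined with the almost sure $\Gamma$-convergence from Theorem~\ref{teo:gamma_conv}, Theorem~\ref{teo:fund_teo_gamma_conv} yields, $P$-almost surely, $\min_{\R^m} L = \lim_{N \to \infty} \inf_{\R^m} L_{\xi,N}$, with every subsequential limit of any precompact sequence $\{\vTheta_N\}$ realizing $L_{\xi,N}(\vTheta_N) \to \min L$ being a minimizer of $L$.

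Working $P$-almost surely from now on, I note that the sequence $\{\vTheta_N\} \subset \overline{B(0,R)}$ postulated in the statement is automatically precompact. Let $\vTheta^{\star\star}$ denote the limit of any convergent subsequence $\vTheta_{N_k}$; then $\vTheta^{\star\star}$ minimizes $L$, and the fact that the particular $\vTheta^\star$ selected above belongs to $\overline{B(0,R)}$ gives
\[
\calL(\vp_{\vTheta^{\star\star}}) = L(\vTheta^{\star\star}) = \min_{\R^m} L \le L(\vTheta^\star) = \calL(\vp_{\vTheta^\star}) \le \inf_{\vp_\vTheta \in W_{m_0}} \calL(\vp_\vTheta) + 1/m_0,
\]
so $\vp_{\vTheta^{\star\star}} \in \calI_{m_0}$ and thus $\|\vp_{\vTheta^{\star\star}} - \vp_0\|_\calW < \eps/2$ by the choice of $m_0$. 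Hypothesis~\eqref{eq:hypothesisX} then gives $\vp_{\vTheta_{N_k}} \to \vp_{\vTheta^{\star\star}}$ in $\calW$. A standard subsequences-of-subsequences argument upgrades the subsequential statement to the full sequence: if $\|\vp_N - \vp_0\|_\calW > \eps$ held along some subsequence, precompactness would extract a further convergent subsequence whose limit would be at distance less than $\eps/2$ from $\vp_0$ by the argument just given, a contradiction.

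The main subtlety will be the identification $\vp_{\vTheta^{\star\star}} \in \calI_{m_0}$: the minimum of $L$ is taken over the truncated set $\overline{B(0,R)}$, whereas membership in $\calI_{m_0}$ is defined via the infimum of $\calL$ over the full class $W_{m_0}$. This works only because $R$ was chosen specifically to contain the quasi-minimizer $\vTheta^\star$, so that the truncation does not raise the minimum above $\inf_{W_{m_0}} \calL + 1/m_0$. Everything else reduces to an application of already-established results, with the random ingredient $N_0=N_0(m_0)$ emerging from the almost sure statements in Theorems~\ref{teo:gamma_conv} and~\ref{teo:fund_teo_gamma_conv}.
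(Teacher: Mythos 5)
Your proposal is correct and follows essentially the same route as the paper's proof: choose $m_0$ via Lemma~\ref{lemma:aprox-inf}, fix $R$ large enough to contain some quasi-minimizer, observe that restricting to $\overline{B(0,R)}$ does not raise the infimum above $\inf_{W_{m_0}}\calL + 1/m_0$, invoke Theorem~\ref{teo:gamma_conv} with equi-coercivity and Theorem~\ref{teo:fund_teo_gamma_conv} to send subsequential limits of $\{\vTheta_N\}$ to minimizers of $L$, and transfer to $\calW$ via \eqref{eq:hypothesisX}. You spell out the identification $\vp_{\vTheta^{\star\star}} \in \calI_{m_0}$ and the subsequences-of-subsequences upgrade a bit more explicitly than the paper does, but the ingredients, their order, and the logical structure are the same.
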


\begin{proof}
Given $\eps > 0$, we consider the set of neural network quasi-minimizers introduced in Lemma \ref{lemma:aprox-inf}, $\calI_m = \{ \vp_{\vTheta_m} \in W_m : \calL(\vp_{\vTheta_m}) \leq \calL(\vp_{\vTheta}) + 1/m \ \forall \vp_{\vTheta} \in W_m \}$. By that lemma, there exists $m_0 > 0$ such that 
\begin{equation}\label{eq:teo_conv_1}
	\| \vp_0 - \vp_{\vTheta_{m_0}}\|_{\calW} < \eps/2 \quad \mbox{for all } \vp_{\vTheta_{m_0}} \in \calI_{m_0}. 
\end{equation}
Next, we fix $R_0>0$ large enough so that there exists a quasi-minimizer $\vp_{\vTheta^*} \in \calI_{m_0}$ with $\vTheta^* \in \overline{B(0,R_0)}$. 
Because the functional $L$ defined in \eqref{eq:reg_func} coincides with $\calL$ over $\overline{B(0,R_0)}$, the reasoning above implies that $\vp_{\vTheta_{m_0}} \in \calI_{m_0}$ for all $\vTheta_{m_0} \in \arg \min_{\vTheta \in B(0,R_0)} L(\vTheta)$.   

For this choice of $m_0$ and $R_0$, from Theorem \ref{teo:gamma_conv} we have $L_{\xi,N} \xrightarrow[]{\Gamma} L$ $P$-almost surely as $N \to \infty$. From the definition \eqref{eq:reg_func_dis} of $L_{\xi,N} \colon \R^{m_0} \to \overline\R$, it follows immediately that $\{L_{\xi , N}\}_{N \in \N}$ is an equi-coercive sequence, according to Definition \ref{def:equicoercividad}. 

Therefore, using Theorem \ref{teo:fund_teo_gamma_conv} together with the fact that every cluster point in $\{ \vTheta_N\}$ is a minimum point for $L$ and the continuity of the map $\vTheta \mapsto p_\vTheta$,
we deduce that $P$-almost surely there exists $N_0 > 0$ such that
\begin{equation}\label{eq:teo_conv_2}
\| \vp_N - \vp_{ m_0} \|_{\calW}  < \eps/2
\end{equation}
for all $N>N_0$ for some $\vp_{\vTheta_{m_0}} \in \calI_{m_0}$. 

The proof concludes by combining \eqref{eq:teo_conv_1} and \eqref{eq:teo_conv_2}.    
\end{proof}

\subsection{Approximation properties of neural networks} \label{sec:approximation-properties}
The convergence results from the previous section are based on the four conditions from Assumption \ref{ass:hypotheses12}. The first two conditions therein are {\em structural}, in the sense that they involve either the problem data or choices about the neural network activation and auxiliary functions. In contrast, the latter two express properties of the neural networks in either terms of approximation capabilities or their stability.

In this section, we show certain constructions of the set $W_m$ that satisfy the structural condition 2) from Assumption \ref{ass:hypotheses12} and, for them, we prove that hypotheses \eqref{eq:hypothesis1} and \eqref{eq:hypothesisX} hold.

\subsubsection{A first example}
For the sake of clarity, we shall restrict our analysis to fully-connected, feedforward neural networks having one hidden layer with $n$ neurons.
Accordingly, we consider the following set of unconstrained neural network functions,
\begin{equation}\label{eq:C_particular}
	C_m := \Big\{ (v_\vTheta,\vpsi_\vTheta , \eta_{\vTheta}) : v_\vTheta = B_v \sigma( A_v x + c_v ), \, \vpsi_\vTheta = B_{\vpsi} \sigma( A_{\vpsi} x + c_{\vpsi} ), \, \eta_{\vTheta} = B_\eta \sigma( A_\eta x + c_\eta ) \Big\}.
\end{equation}
Above $A_v,A_{\vpsi},A_{\eta} \in \R^{n \times d}$, $c_v,c_{\vpsi},c_\eta \in \R^{n \times 1}$, $B_v,B_\eta \in \R^{1\times n}$, $B_{\vpsi} \in \R^{d \times n }$, and $\sigma \colon \R^n \to \R^n$, and $\sigma$ a Lipschitz continuous, non-polynomial activation function, applied coordinate-wise. We gather all parameters in $\vTheta \in \R^m$, where $m = n(4d + 5)$. Note that, in this setting, stating $m \to \infty$ is equivalent to expressing that the number of neurons $n$ increases without bound. Furthermore, in this section we shall operate either under the choice
\begin{equation} \label{eq:choice-a}
a(t) := \text{ReLU}(t) = \max\{0, t\}.
\end{equation}

The approximation properties of our spaces are proved in Proposition \ref{prop:approx_prop_W_m}
below. Before addressing this issue, we note that the strong imposition of the homogenous boundary condition in  \eqref{eq:def-u}, calls for an appropriate understanding of the space to which the function $\frac{u-g}{d_{\Omega}}$ belongs.  This is the purpose of the following definition and the subsequent two lemmas; we point out that the Lipschitz condition on $\Omega$ introduced in Assumption \ref{ass:hypotheses12} is instrumental for the validity of our discussion.

\begin{definition}[weighted $L^2$ space]

Let $\dD$ be a surrogate distance function (Definition \ref{def:distance-function}). For $\alpha\ge 0$, we introduce the weighted $L^2$ space
     \[ L^2(\Omega,\alpha):= \{ w \colon   \ \dD^{\alpha}  w \in L^2(\Omega) \},\]
     equipped with the norm $\|w\|_{L^2(\Omega,\alpha)} := \| \dD^{\alpha} w \|$
  and its Sobolev extension
     \[
     H^1(\Omega,\alpha,\beta) := \{ w \in L^2(\Omega,\alpha) \colon \ \nabla w \in L^2(\Omega,\beta) \},
    \] 
 with the induced norm $\|w\|_{H^1(\Omega,\alpha,\beta)} := \|w\|_{L^2(\Omega,\alpha)} + \| \nabla w \|_{L^2(\Omega,\beta)}$.
\end{definition}
It is clear that the definition above is not dependent on the particular choice of $\dD$, as all resulting spaces turn out to be equivalent.
The following result is implicitly given in \cite{Kufner} but not stated in the form needed in this article. For completeness, we outline the proof.

\begin{lemma}[density] \label{le:densidad} The set $C^{\infty}(\overline \Omega)$ is dense in $H^1(\Omega,0,1)$.
\end{lemma}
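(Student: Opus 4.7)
The plan is to employ a localization--translation--mollification argument standard for weighted Sobolev spaces with distance weights. Since $\partial\Omega$ is Lipschitz (Assumption~\ref{ass:hypotheses12}), I would begin by choosing a finite cover $\{U_i\}_{i=0}^M$ of $\overline{\Omega}$ with $\overline{U_0}\subset\Omega$ and, for $i\ge 1$, bi-Lipschitz charts flattening $U_i\cap\partial\Omega$ to a piece of $\{y_d=0\}$ and $U_i\cap\Omega$ to a piece of $\{y_d>0\}$. Under such a chart, $\dD$ is comparable to $y_d$ in the sense of Definition~\ref{def:distance-function}, so $H^1(\Omega,0,1)$ is preserved up to an equivalent norm. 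A subordinate partition of unity then reduces the problem to approximating a function $w\in H^1(\Omega,0,1)$ supported in a single $U_i$.

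For the interior chart $i=0$ the weight is bounded from above and from below on $\supp(w)$, so a standard convolution with a mollifier $\rho_\delta$ of support radius smaller than the distance from $\supp(w)$ to $\partial\Omega$ produces a $C_c^\infty(\Omega)\subset C^\infty(\overline\Omega)$ approximation. For a boundary chart, after flattening I may assume that $\Omega$ locally coincides with $\R^{d-1}\times(0,\infty)$, that $w$ has compact support in $\R^{d-1}\times[0,\infty)$, and that the weight is $y_d$. I would define the translates $w_\epsilon(y',y_d):=w(y',y_d+\epsilon)$, which are defined on $\R^{d-1}\times(-\epsilon,\infty)$, hence on a neighborhood of $\overline\Omega$. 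The approximation is completed by mollifying $w_\epsilon$ with a smooth kernel $\rho_\delta$ of support radius $\delta<\epsilon/2$; the resulting functions belong to $C^\infty(\overline\Omega)$, so it remains to prove that $w_\epsilon\to w$ in $H^1(\Omega,0,1)$ as $\epsilon\to 0$ and $\rho_\delta\ast w_\epsilon\to w_\epsilon$ as $\delta\to 0$.

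The nonstandard point, and what I regard as the main obstacle, is the first of these two convergences. Its $L^2$-part is immediate from continuity of translation in $L^2(\Omega)$, but for the gradient part one cannot invoke translation continuity of $\nabla w$ directly since $\nabla w$ need not lie in $L^2(\Omega)$ unweighted. I would instead use the algebraic decomposition
\[
y_d\,\nabla w_\epsilon(y) = F(y+\epsilon e_d) - \epsilon\,(\nabla w)(y+\epsilon e_d), \qquad F(y) := y_d\,\nabla w(y) \in L^2(\Omega;\R^d),
\]
which splits $y_d\nabla w_\epsilon$ into a translation of $F$ (converging in $L^2$ to $F=y_d\nabla w$) and a residual whose $L^2$-norm squared equals $\epsilon^2\!\int_{\{z_d>\epsilon\}}|\nabla w(z)|^2\,dz\le \int_{\{z_d>\epsilon\}}z_d^2|\nabla w(z)|^2\,dz$ and therefore vanishes as $\epsilon\to 0$ by the dominated convergence theorem, since $z_d^2|\nabla w|^2\in L^1(\Omega)$. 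Once this is in hand, a diagonal choice $\delta=\delta(\epsilon)\to 0$ concludes the boundary-chart step, and the usual partition-of-unity and chart bookkeeping assembles the local approximations into the desired global $C^\infty(\overline\Omega)$ approximation of $w$.
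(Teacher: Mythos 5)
Your route is genuinely different from the paper's: the paper simply quotes two results from Kufner's book on weighted Sobolev spaces (Proposition~9.6 to identify $H^1(\Omega,0,1)=H^1(\Omega,1,1)$, and Theorem~7.2 for density of $C^\infty(\overline\Omega)$ in $H^1(\Omega,1,1)$), whereas you build a self-contained localization--translation--mollification proof. Your algebraic split $y_d\nabla w_\epsilon(y)=F(y+\epsilon e_d)-\epsilon(\nabla w)(y+\epsilon e_d)$ with $F=y_d\nabla w\in L^2$ is exactly the right device for handling the gradient term, and it is a nice, elementary alternative.

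There is, however, a genuine gap in how you set up the boundary charts. You flatten $U_i\cap\Omega$ to a piece of $\{y_d>0\}$ via a \emph{bi-Lipschitz} map, do the translation and mollification in the flattened variables, and then claim the result belongs to $C^\infty(\overline\Omega)$. That last claim is false: a bi-Lipschitz change of variables destroys smoothness, so the pullback of a $C^\infty$ function on the half-space along a merely Lipschitz chart is only Lipschitz on $\overline\Omega$. The standard way around this (and what is implicitly used in Kufner) is to \emph{not} flatten: on a boundary patch where $\Omega$ is locally $\{x_d>\phi(x')\}$ with $\phi$ Lipschitz, translate the original function in the fixed ambient direction $e_d$ by $\epsilon$ (this pushes its domain of definition past $\partial\Omega$, since the translated graph lies strictly inside $\Omega$), and then mollify directly in $\R^d$. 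The mollified translates are honest $C^\infty(\overline\Omega)$ functions, and your decomposition of $y_d\nabla w_\epsilon$ goes through unchanged with $y_d$ replaced by the surrogate distance $\dD$, using that $\dD$ is comparable to $x_d-\phi(x')$ in the patch and that translation shifts this quantity by $\epsilon$. A secondary imprecision: the inequality $\epsilon^2\int_{\{z_d>\epsilon\}}|\nabla w|^2\le\int_{\{z_d>\epsilon\}}z_d^2|\nabla w|^2$ is true but does not by itself give vanishing (the right-hand side tends to $\|z_d\nabla w\|^2\neq 0$); you should instead write the residual as $\int_\Omega \chi_{\{z_d>\epsilon\}}(\epsilon/z_d)^2\,z_d^2|\nabla w|^2\,dz$ and apply dominated convergence to the factor $\chi_{\{z_d>\epsilon\}}(\epsilon/z_d)^2\le 1$, which tends to $0$ pointwise on $\{z_d>0\}$.
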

\begin{proof} 
Taking $k=1$, $\varepsilon=2$ and $p=2$, in \cite[Proposition 9.6]{Kufner}, we first note that  $H^1(\Omega,0,1)=H^1(\Omega,1,1)$. Combining this  with the identity  $\overline{C^{\infty}(\overline \Omega)}^{\|\cdot\|_{H^1(\Omega,1,1)}}=H^1(\Omega,1,1)$ shown in \cite[Theorem 7.2]{Kufner}, we deduce the  claimed density.
\end{proof}

\begin{lemma}[embedding]
\label{le:en_espacio} For all $w\in H^1_0(\Omega)$, it holds that $w/d_{\partial \Omega}\in H^1(\Omega,0,1)$ and
\[
\| w/d_{\partial \Omega}\|_{H^1(\Omega,0,1)} \le C \| w \|_{H^1_0(\Omega)}
\]
with a constant depending on $\Omega$ and the choice of $\dD$.
\end{lemma}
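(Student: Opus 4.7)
The plan is to reduce the statement to Hardy's inequality on Lipschitz domains combined with a straightforward product-rule computation.

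First, I would invoke Hardy's inequality: since $\Omega$ is bounded and Lipschitz (as ensured by Assumption \ref{ass:hypotheses12}), for every $w \in H^1_0(\Omega)$ one has
\[
\left\| \frac{w}{\mathrm{dist}(\cdot,\partial\Omega)} \right\| \le C_H \|\nabla w\|.
\]
The comparability $c_1 \dD \le \mathrm{dist}(\cdot,\partial\Omega) \le c_2 \dD$ from Definition \ref{def:distance-function} then immediately gives $\|w/\dD\| \le C \|\nabla w\|$. This establishes $w/\dD \in L^2(\Omega) = L^2(\Omega,0)$ with the desired control on the $L^2$-part of the $H^1(\Omega,0,1)$-norm.

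Next I would handle the weighted-gradient part. Because $\dD$ is Lipschitz on $\overline{\Omega}$ and strictly positive on every compact subset of $\Omega$, the reciprocal $1/\dD$ is locally Lipschitz in $\Omega$. Consequently, $w/\dD \in H^1_{\mathrm{loc}}(\Omega)$ and the product rule applies, yielding almost everywhere in $\Omega$
\[
\nabla\!\left(\frac{w}{\dD}\right) = \frac{\nabla w}{\dD} - \frac{w\,\nabla \dD}{\dD^{2}}.
\]
Multiplying by $\dD$ gives
\[
\dD\,\nabla(w/\dD) \;=\; \nabla w \;-\; \frac{w}{\dD}\,\nabla \dD,
\]
and since $\nabla \dD \in L^\infty(\Omega)$ by Rademacher's theorem (with norm controlled by the Lipschitz constant of $\dD$), one obtains
\[
\|\dD\,\nabla(w/\dD)\| \;\le\; \|\nabla w\| \;+\; \|\nabla \dD\|_{L^\infty}\,\|w/\dD\|.
\]
Applying the Hardy-type bound from the first step to the last summand upgrades this to $\|\dD\,\nabla(w/\dD)\| \le C \|\nabla w\|$, which is exactly the statement that $\nabla(w/\dD) \in L^2(\Omega,1)$ with the right bound.

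Summing the two estimates produces $\|w/\dD\|_{H^1(\Omega,0,1)} \le C \|w\|_{H^1_0(\Omega)}$, as claimed. The only genuine obstacle is the invocation of Hardy's inequality, which is why the Lipschitz hypothesis on $\partial\Omega$ is essential; I would cite a standard source such as Kufner (already used in Lemma \ref{le:densidad}) for the validity of that inequality on bounded Lipschitz domains. Everything else is a short product-rule computation combined with the Lipschitz bound on $\nabla \dD$.
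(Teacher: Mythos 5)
Your proof is correct and follows essentially the same route as the paper: Hardy's inequality to control the $L^2$-part, then a product-rule computation combined with $\nabla\dD \in L^\infty(\Omega)$ (and Hardy again) to control the weighted gradient. The only cosmetic differences are that you pass explicitly through the true distance function via the comparability in Definition~\ref{def:distance-function} and use the triangle inequality where the paper uses a Young-type bound, neither of which changes the substance.
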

\begin{proof}
Let $w\in H^1_0(\Omega)$. On the one hand, Hardy's inequality readily gives
\begin{equation} \label{eq:Hardy}
    \|w/\dD\|^2 \le C(\Omega) \| \nabla w\|^2.  
 \end{equation}
 
 On the other hand,  
    \[
    \begin{split}
    \| \dD \nabla\big( w/\dD \big)  \|^2  & = \int_\Omega \dD^2 \left( \frac{\nabla w}{\dD} - \frac{w \nabla \dD}{\dD^2} \right)^2 \\ & \le 2 \int_{\Omega} | \nabla w |^2 + \frac{w^2 |\nabla \dD|^2}{\dD^2}\\
    & \le C(\Omega, \| \nabla \dD \|_{L^{\infty}(\Omega)}) \|\nabla w\|^2,
    \end{split}
    \]
    where the last  inequality follows from \eqref{eq:Hardy} and the uniform boundedness of $|\nabla \dD|$.
\end{proof}

Our choice of $a$ in \eqref{eq:choice-a} gives rise to the following well-known result. While its proof is standard, we include it here for completeness.

\begin{lemma}[convergence of composition] \label{lem:convergence-a}
Let $a$ be given by \eqref{eq:choice-a} and let $\beta \ge 0$. If $w_m \to w$ in $H^1(\Omega,0,\beta)$, then $a(w_m) \to a(w)$ in $H^1(\Omega,0,\beta)$.
\end{lemma}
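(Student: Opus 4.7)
The plan is to split the target convergence into its two natural components and exploit the $1$-Lipschitz character of $a(t)=\max\{0,t\}$. The $L^2(\Omega)$ piece is immediate: since $|a(s)-a(t)|\le|s-t|$, one has $\|a(w_m)-a(w)\|\le\|w_m-w\|\to 0$, which handles the $L^2(\Omega,0)$ portion of the $H^1(\Omega,0,\beta)$-norm and also guarantees $a(w),a(w_m)\in L^2(\Omega)$.

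For the weighted gradient part I would invoke the classical Sobolev chain rule for truncations, namely $\nabla a(v) = \chi_{\{v>0\}}\nabla v$ a.e., together with Stampacchia's identity asserting that $\nabla v=0$ a.e.\ on $\{v=0\}$ for any $v$ with a (locally integrable) weak gradient. Both facts are local in nature, so they transfer from the standard $H^1$ setting to $H^1(\Omega,0,\beta)$ without modification. They also certify $a(w),a(w_m)\in H^1(\Omega,0,\beta)$, with $|\dD^{\beta}\nabla a(v)|\le|\dD^{\beta}\nabla v|$. Using the chain rule to decompose,
\[
\dD^{\beta}\bigl(\nabla a(w_m)-\nabla a(w)\bigr) = \chi_{\{w_m>0\}}\dD^{\beta}(\nabla w_m-\nabla w) + \bigl(\chi_{\{w_m>0\}}-\chi_{\{w>0\}}\bigr)\dD^{\beta}\nabla w ,
\]
the first summand has $L^2$-norm bounded by $\|\dD^{\beta}(\nabla w_m-\nabla w)\|$, which tends to zero by the assumed convergence in $H^1(\Omega,0,\beta)$.

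The main obstacle is the second summand, where the characteristic functions may jump. I plan to handle it via the standard subsequence argument: pass to a subsequence along which $w_{m_k}\to w$ pointwise a.e.\ (available since $w_m\to w$ in $L^2$). On the sets $\{w>0\}$ and $\{w<0\}$ one has, eventually in $k$, $\chi_{\{w_{m_k}>0\}}=\chi_{\{w>0\}}$ pointwise; on $\{w=0\}$, Stampacchia forces $\nabla w=0$ a.e., so the integrand vanishes there identically. Hence $\bigl(\chi_{\{w_{m_k}>0\}}-\chi_{\{w>0\}}\bigr)\dD^{\beta}\nabla w\to 0$ a.e., while the pointwise bound $2\dD^{\beta}|\nabla w|$ provides an $L^2(\Omega)$ majorant. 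Dominated convergence then yields convergence of this term to zero along the subsequence. Finally, since the target limit $a(w)$ does not depend on the subsequence extracted, the usual \emph{every subsequence has a further subsequence} argument upgrades the convergence to the full original sequence, concluding the proof.
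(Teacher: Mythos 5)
Your proof is correct and follows essentially the same route as the paper: the Lipschitz estimate handles the $L^2$ piece, and the weighted-gradient piece is treated via the chain rule $\nabla a(v)=\chi_{\{v>0\}}\nabla v$, the same two-term decomposition, Stampacchia's lemma on $\{w=0\}$, and dominated convergence. The only difference is that you explicitly close the subsequence-to-full-sequence gap at the end, whereas the paper passes to an a.e.\ convergent subsequence and leaves that last upgrade implicit.
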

\begin{proof}
By the Lipschitz continuity of $a$, it is easy to observe that $a(w_m) \to a(w)$ in $L^2(\Omega)$:
\[
\| a(w_m) - a(w) \|_{L^2(\Omega)} \le \| w_m - w \|_{L^2(\Omega)}.
\]

To prove the convergence of the gradients, we note $\nabla (a(w)) = \chi_{\{ w > 0 \}} \nabla w$ and write
\begin{equation} \label{eq:split-gradient}
\nabla (a (w_m)) - \nabla (a(w)) =  \chi_{\{ w_m > 0 \}} \nabla (w_m - w) + \left( \chi_{\{ w_m > 0 \}} -  \chi_{\{ w > 0 \}} \right)  \nabla w.
\end{equation}
It is clear that the first term tends to $0$ in $L^2(\Omega, \beta)$:
\[
\int_\Omega |\chi_{\{ w_m > 0 \}} \nabla (w_m - w) |^2 \dD^{2\beta} \le \int_\Omega | \nabla (w_m - w) |^2 \dD^{2\beta} \to 0.
\]

For the second term in \eqref{eq:split-gradient}, since $\nabla w = 0$ a.e. on the set $\{ w = 0 \}$, we can write
\[
\int_\Omega \left| \chi_{\{ w_m > 0 \}} -  \chi_{\{ w > 0 \}} \right| \, |\nabla w|^2 \dD^{2\beta} = \int_{\{ w \neq 0 \}} \left| \chi_{\{ w_m > 0 \}} -  \chi_{\{ w > 0 \}} \right| \,  |\nabla w |^2  \dD^{2\beta} .
\]
Over the set $\{ w \neq 0 \}$ and up to a subsequence, we have $w_m \to w$ a.e., and by the continuity of $a'(t) = \chi_{\{ t > 0\}}$ for $t \neq 0$, we deduce  $\chi_{\{w_m >0 \}} \to \chi_{\{w >0 \}}$ a.e. on $\{ w \neq 0 \}$. Since the integrand is bounded by $ |\nabla w |^2  \dD^{2\beta} \in L^1(\Omega)$, we can therefore apply the dominated convergence theorem to conclude that 
\[
 \int_\Omega  \left| \chi_{\{ w_m > 0 \}} -  \chi_{\{ w > 0 \}} \right| \, |\nabla w |^2  \dD^{2\beta}  \to 0.
\]
\end{proof}

We have collected all the ingredients to show that the choice \eqref{eq:C_particular}--\eqref{eq:choice-a} gives rise to property \eqref{eq:hypothesis1}.

 \begin{proposition}[approximation] \label{prop:approx_prop_W_m}
 Let the structural condition 1) in Assumption \ref{ass:hypotheses12} hold, and the neural network classes be constructed according to \eqref{eq:admissible-class-W} by using \eqref{eq:C_particular} and \eqref{eq:choice-a}. Then, for all $\vp \in K^\calW$, we have $d(\vp,W_m) \to 0$ as $m \to \infty$; in particular, \eqref{eq:hypothesis1} holds.
\end{proposition}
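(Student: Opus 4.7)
The plan is to construct, for any $\vp = (u, \vphi, \gamma) \in K^\calW$, a sequence $\vp_m \in W_m$ with $\|\vp_m - \vp\|_\calW \to 0$. I treat the first component separately from the pair $(\vphi, \gamma)$ and close with a diagonal argument combining the three.

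For the first component, the structural form $u_m = g + \dD\, a(v_m)$ reduces the problem to approximating $w := (u-g)/\dD$. Lemma \ref{le:en_espacio} gives $w \in H^1(\Omega, 0, 1)$, and $u \ge g$ forces $w \ge 0$ a.e. By Lemma \ref{le:densidad} there exist $\tilde w_k \in C^\infty(\overline\Omega)$ with $\tilde w_k \to w$ in $H^1(\Omega, 0, 1)$, and Lemma \ref{lem:convergence-a} yields $a(\tilde w_k) \to a(w) = w$ in the same space. For each fixed $\tilde w_k$, standard universal-approximation results for shallow networks with Lipschitz non-polynomial activation produce neural networks $v_{k,n} \to \tilde w_k$ in $W^{1,2}(\Omega)$, hence in $H^1(\Omega, 0, 1)$; a further application of Lemma \ref{lem:convergence-a} gives $a(v_{k,n}) \to a(\tilde w_k)$ in $H^1(\Omega, 0, 1)$. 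A diagonal extraction produces $v_m$ with $a(v_m) \to w$ in $H^1(\Omega, 0, 1)$, and multiplication by $\dD$ (using boundedness of $\dD$ and $\nabla\dD$) delivers $u_m = g + \dD\, a(v_m) \to u$ in $H^1_0(\Omega)$.

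For the pair $(\vphi, \gamma)$, the admissibility constraint $\gamma - \Div\vphi \ge 0$ (in the distributional sense, possibly a singular positive measure $\mu$) must be matched with the a.e. positivity of $a(\eta_n) = \gamma_n - \Div\vpsi_n$ in $W_m$. The strategy is two-stage. First, build smooth admissible pairs $(\tilde\vphi^\delta, \tilde\gamma^\delta) \in C^\infty(\overline\Omega; \R^d) \times C^\infty(\overline\Omega)$ with $\tilde\gamma^\delta - \Div\tilde\vphi^\delta \ge 0$ a.e.\ and $(\tilde\vphi^\delta, \tilde\gamma^\delta) \to (\vphi, \gamma)$ in $L^2 \times L^2$: mollifying the zero-extensions of $\vphi$ and $\gamma$ yields, on the interior $\Omega_\delta$, the identity $\tilde\gamma^\delta - \Div\tilde\vphi^\delta = \rho^\delta * \mu \ge 0$, while a shrinking boundary layer is treated with a smooth cutoff and, where needed, a local replacement of $\gamma$ by its positive part, contributing an $L^2$ error that vanishes with $\delta$. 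Then, for fixed $\delta$, take neural networks $\vpsi_n \to \tilde\vphi^\delta$ in $H^1(\Omega; \R^d)$ (so that $\Div\vpsi_n \to \Div\tilde\vphi^\delta$ in $L^2$) and $\eta_n \to h^\delta := \tilde\gamma^\delta - \Div\tilde\vphi^\delta$ in $L^2$; the $1$-Lipschitz continuity of $\max\{0, \cdot\}$ together with $h^\delta \ge 0$ gives $a(\eta_n) \to h^\delta$ in $L^2$, so $\Div\vpsi_n + a(\eta_n) \to \tilde\gamma^\delta$ in $L^2$ as $n \to \infty$, while $\tilde\gamma^\delta \to \gamma$ in $L^2$ as $\delta \to 0$. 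A diagonal extraction across $\delta$ and $n$ concludes the argument.

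The principal obstacle will be the construction of the smooth admissible pair $(\tilde\vphi^\delta, \tilde\gamma^\delta)$: preserving the positivity of the residual $\tilde\gamma^\delta - \Div\tilde\vphi^\delta$ throughout the boundary layer, where the zero extension of $\vphi$ produces distributional contributions to the divergence, requires a careful balancing of the mollification and cutoff parameters so that both $L^2$-convergence of $(\tilde\vphi^\delta, \tilde\gamma^\delta)$ and the sign condition on the residual survive simultaneously.
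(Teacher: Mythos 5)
Your treatment of the first component is essentially the paper's: reduce to $w=(u-g)/\dD\in H^1(\Omega,0,1)$ via Lemma~\ref{le:en_espacio}, smooth by Lemma~\ref{le:densidad}, approximate with shallow networks in $H^1$, compose with $a$ via Lemma~\ref{lem:convergence-a} using $w\ge 0$, multiply back by $\dD$; the diagonal extraction you make explicit is implicit in the paper.

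For the pair $(\vphi,\gamma)$, however, you and the paper part ways. The paper simply states that one can construct $\vpsi_{\vTheta^m_\vpsi}\to\vphi$ in $H^1(\Omega;\R^d)$ and $\eta_{\vTheta^m_\eta}\to\gamma-\Div\vphi$ in $L^2(\Omega)$ by ``similar but simpler arguments,'' and closes with estimate~\eqref{eq:approx_1} plus the Lipschitz continuity of $\max\{0,\cdot\}$. Read literally, that requires $\vphi\in H^1(\Omega;\R^d)$ and $\gamma-\Div\vphi\in L^2(\Omega)$ — neither of which is part of the definition of $K^\calW$, where $\vphi$ is merely $L^2$ and $\gamma-\Div\vphi$ is a priori only a nonnegative distribution (a positive measure). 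You correctly identify this as the delicate point and propose an explicit density step: mollify $(\vphi,\gamma)$ jointly so as to preserve the sign of the residual $\tilde\gamma^\delta-\Div\tilde\vphi^\delta$, handle the boundary layer, and only then invoke neural-network approximation and the Lipschitz trick for $a$. That is a genuinely more careful route, but as you acknowledge, the mollified residual's nonnegativity near $\partial\Omega$ (where the zero extension contributes a distributional term to the divergence) is the crux and you leave it unverified. So your proposal is in the same spirit as the paper's, faithfully reproduces the key mechanism ($a(\eta_m)\to\gamma-\Div\vphi$ in $L^2$ using $a=\max\{0,\cdot\}$ and nonnegativity of the target), is more explicit about where the regularity is actually needed, but does not fully close the gap it flags — which, to be fair, the paper's own proof glosses over as well.

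Two smaller points worth flagging. First, you write ``take neural networks $\vpsi_n\to\tilde\vphi^\delta$ in $H^1(\Omega;\R^d)$''; for one-hidden-layer networks as in~\eqref{eq:C_particular} this is the standard Sobolev universal approximation result and is fine, but note that the paper only ever needs $\vpsi_m\to\vphi$ in $L^2$ and $\Div\vpsi_m\to\Div\vphi$ in $L^2$, which are strictly weaker than $H^1$ convergence. Second, the paper makes no appeal to the minimizer's extra regularity in the statement (it claims the result for all $\vp\in K^\calW$), so whichever of the two proofs one adopts, the regularity question you raise is the one that has to be resolved for the proposition to hold in the generality stated.
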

\begin{proof}
First, we recall that the desired approximation property is expressed with respect to the norm $\| \cdot \|_{\calW}$, given by $\| (u, \vphi, \gamma) \|_{\calW} = \left( \| \nabla u \|^2 + \| \vphi \|^2 + \| \gamma \|^2 \right)^{1/2}$.
     
Let $\vp = (u, \vphi, \gamma ) \in K^\calW$.
  Since $ u - g \in H^1_0(\Omega) $, Lemma \ref{le:en_espacio} yields $ (u - g)/\dD \in H^1(\Omega,0,1) $. Thanks to Lemma \ref{le:densidad},  using classical results on $ H^1 $ approximation \cite[Section 6]{pinkus}, along with the fact that $ \| \cdot \|_{H^1(\Omega,0,1)} \le C(\Omega) \| \cdot \|_{H^1(\Omega)} $, we can ensure the existence of a sequence of weights  $ \{\vTheta^m_v\}_{m \in \mathbb{N}} $ such that $
\|(u - g)/\dD - v_{\vTheta^m_v}\|_{H^1 (\Omega,0,1)} \to 0 \quad \text{as } m \to \infty.$
By Lemma \ref{lem:convergence-a}, we deduce $a( v_{\vTheta^m_v}) \to a \left( (u - g)/\dD \right)$ in $H^1(\Omega,0,1)$. But,  as $ (u - g)/\dD \geq 0 $ it follows that $a(w_m) \to (u - g)/\dD$ in $H^1(\Omega,0,1)$. 
Thus,   we can guarantee the existence of a sequence $ \{\vTheta^m_v\}_{m \in \mathbb{N}} $ such that 
\begin{equation} \label{eq:int-kondratiev}
\|(u - g)/\dD - a(v_{\vTheta^m_v})\|_{H^1(\Omega,0,1)} \to 0 \quad \text{as } m \to \infty.
\end{equation}

We now aim to prove that $\|\nabla\left( u - g - \dD a(v_{\vTheta^m_v} )\right)\| \to 0$ as $m \to \infty$. To this end, we write
\[
\nabla\left( u - g - \dD a(v_{\vTheta^m_v} )\right) =  \nabla \dD \left( (u - g)/\dD - a(v_{\vTheta^m_v} ) \right) + \dD \nabla \left( (u - g)/\dD - a(v_{\vTheta^m_v} ) \right),
\]
so that 
 \[
    \begin{split}
    \| \nabla \big( u - g - \dD a(v_{\vTheta^m_v} ) \big) \| & \leq \| \nabla \dD \big( (u - g)/\dD - a(v_{\vTheta^m_v} ) \big) \| + \| \dD \nabla \big( (u - g)/\dD - a(v_{\vTheta^m_v} ) \big) \| \\
    & \leq C \| (u - g)/\dD - a(v_{\vTheta^m_v} ) \|_{H^1(\Omega,0,1)},
    \end{split}
    \]
with a constant $C>0$ depending on $\|\nabla \dD\|_{L^\infty(\Omega)}$. Using \eqref{eq:int-kondratiev}, we deduce that 
\[\|\nabla\left( u - g - \dD a(v_{\vTheta_{v}^m})\right)\| \to 0 \ \mbox{as }  m \to \infty,\]
which, naming $u_{\vTheta_{v}^m} := g + \dD a(v_{\vTheta_{v}^m})$ according to \eqref{eq:admissible-class-W}, yields $u_{\vTheta_{v}^m} \to u$ in $H^1_0(\Omega)$.

Following similar but simpler arguments, we construct the sequences $\{\vTheta_{\vpsi}^m\}_{m \in \N}$ and $\{\vTheta_{\eta}^m\}_{m \in \N}$ such that $\vpsi_{\vTheta_{\vpsi}^m} \to \vphi$ in $H^1(\Omega;\R^d)$ and $\eta_{\vTheta_{\eta}^m} \to \gamma - \Div \vphi$ in $L^2(\Omega)$. Clearly, the former yields $\|\vphi - \vpsi_{\vTheta_{\vpsi}^m}\| \to 0$ and $\| \Div \vphi - \Div \vpsi_{\vTheta_{\vpsi}^m}\| \to 0$ as $m \to \infty$. Furthermore, we can estimate
\begin{equation}\label{eq:approx_1}
    \|\gamma - \Div \vpsi_{\vTheta_{\vpsi}^m} - a(\eta_{\vTheta_{\eta}^m})\| \leq \| \gamma - \Div \vphi - a(\eta_{\vTheta_{\eta}^m}) \| + \| \Div \vphi - \Div \vpsi_{\vTheta_{\vpsi}^m} \|.
\end{equation}

Since $\vp = (u,\vphi, \gamma) \in K^\calW$, we have $\gamma - \Div \vphi \geq 0$. Additionally, as $a(t) = \max\{0,t\}$, and $\eta_{\vTheta_{\eta}^m} \to \gamma - \Div \vphi$ in $L^2(\Omega)$, we deduce that $\|\gamma - \Div \vphi - a(\eta_{\vTheta_{\eta}^m})\| \to 0$ as $m \to \infty$. Combining this with estimate \eqref{eq:approx_1}, we conclude that $\|\gamma - \Div \vpsi_{\vTheta_{\vpsi}^m} - a(\eta_{\vTheta_{\eta}^m})\| \to 0$.

By taking $\{\vTheta_m\}_{m \in \N} = \{ ( \vTheta_v^m, \vTheta_\vpsi^m, \vTheta_{\eta}^m ) \}_{m \in \N}$ and defining 
\[
\vp_{\vTheta_m} := \left( g + \dD a(v_{\vTheta_v^m}), \vpsi_{\vTheta_\vpsi^m}, \Div \vpsi_{\vTheta_\vpsi^m} + \eta_{\vTheta_\eta^m} \right),
\]
we find that $\vp_{\vTheta_m} \in W^m$ for all $m \in \N$ and $\vp_{\vTheta_m} \to \vp$ as $m \to \infty$ in $\calW$. 
\end{proof}

Next, we show the validity of \eqref{eq:hypothesisX}.

\begin{proposition}[continuity] \label{prop:cont_prop_W_m}
Let the neural network classes be constructed according to \eqref{eq:admissible-class-W} by using \eqref{eq:C_particular} and \eqref{eq:choice-a} and $R>0$ be given according to \eqref{eq:reg_func}.  Then, \eqref{eq:hypothesisX} holds: the map $\overline{B(0,R)} \to W_m$, $\vTheta \mapsto \vp_\vTheta$ is continuous.
\end{proposition}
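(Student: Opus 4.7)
The plan is to prove sequential continuity: if $\vTheta_k \to \vTheta$ in $\overline{B(0,R)}$, show $\vp_{\vTheta_k} \to \vp_\vTheta$ in $\calW$. Writing out the three components of the image, this reduces to establishing, in $L^2(\Omega)$, the convergences
\[
\nabla u_{\vTheta_k} \to \nabla u_\vTheta, \qquad \vpsi_{\vTheta_k} \to \vpsi_\vTheta, \qquad \Div \vpsi_{\vTheta_k} + a(\eta_{\vTheta_k}) \to \Div \vpsi_\vTheta + a(\eta_\vTheta).
\]
These will follow from an intermediate convergence result for the underlying unconstrained networks $v_\vTheta, \vpsi_\vTheta, \eta_\vTheta$ in appropriate Sobolev spaces, which I then transfer to the composed objects using Lemma \ref{lem:convergence-a} and the product rule for $\dD \cdot (\,\cdot\,)$.

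For the underlying networks, I would first observe that, because $\sigma$ is Lipschitz and $\vTheta$ ranges over the compact set $\overline{B(0,R)}$, each of $v_\vTheta, \vpsi_\vTheta, \eta_\vTheta$ is jointly Lipschitz in $(\vTheta, x)$ on $\overline{B(0,R)} \times \overline\Omega$. This yields uniform, and hence $L^2$, convergence of the functions themselves. For the weak $x$-derivatives one differentiates explicitly; for instance, $\nabla_x v_\vTheta(x) = B_v\, \text{diag}(\sigma'(A_v x + c_v))\, A_v$, and similarly for $\vpsi_\vTheta$ and $\eta_\vTheta$. The only delicate point is that $\sigma'$ may fail to be continuous (as in the ReLU case), but its discontinuity set is finite; pulled back through each affine preactivation $x \mapsto (A_v x + c_v)_j$ with the limit parameter $\vTheta$, this produces at most a finite union of hyperplanes, a null set in $\Omega$ (the degenerate case where some row of $A_v$ vanishes is handled separately, since the corresponding contribution to $\nabla_x v_\vTheta$ is also forced to zero). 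Off that null set, $\nabla_x v_{\vTheta_k}(x) \to \nabla_x v_\vTheta(x)$ pointwise, and since $|\vTheta_k| \le R$ gives a uniform bound on these derivatives, dominated convergence on the bounded domain $\Omega$ delivers $v_{\vTheta_k} \to v_\vTheta$ in $H^1(\Omega)$, and hence in $H^1(\Omega,0,1)$ since $\dD$ is bounded. An analogous argument yields $\vpsi_{\vTheta_k} \to \vpsi_\vTheta$ in $H^1(\Omega;\R^d)$ and $\eta_{\vTheta_k} \to \eta_\vTheta$ in $L^2(\Omega)$.

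With these in hand, the three required convergences follow. The $L^2$-convergence of $\vpsi_{\vTheta_k}$ is immediate; for the $\gamma$-component I combine the $L^2$-convergence of $\Div \vpsi_{\vTheta_k}$ (from the $H^1$-convergence of $\vpsi$) with the Lipschitz continuity of $a$ applied to $\eta_{\vTheta_k} \to \eta_\vTheta$ in $L^2$. For the $u$-component, I would invoke Lemma \ref{lem:convergence-a} with $\beta=1$ to obtain $a(v_{\vTheta_k}) \to a(v_\vTheta)$ in $H^1(\Omega,0,1)$, and then use $\nabla(\dD w) = w\,\nabla\dD + \dD\,\nabla w$ together with $\|\nabla\dD\|_{L^\infty(\Omega)}<\infty$ (valid since $\dD$ is Lipschitz) to estimate
\[
\|\nabla(\dD w)\|_{L^2(\Omega)} \le \|\nabla \dD\|_{L^\infty(\Omega)}\,\|w\|_{L^2(\Omega)} + \|\dD\,\nabla w\|_{L^2(\Omega)} \le C\,\|w\|_{H^1(\Omega,0,1)};
\]
applying this bound to $w = a(v_{\vTheta_k}) - a(v_\vTheta)$ closes the argument for $\nabla u_{\vTheta_k} \to \nabla u_\vTheta$.

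The main obstacle I anticipate is controlling the lack of smoothness of both the activation $\sigma$ and the barrier function $a = \text{ReLU}$: one must ensure the nondifferentiability sets do not accumulate mass in the limit. For $\sigma$, the hyperplane-measure-zero observation sketched above handles this via dominated convergence; for $a$, Lemma \ref{lem:convergence-a} already packages the needed computation, so it suffices to invoke it.
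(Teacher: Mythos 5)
Your proof is correct and, at the macroscopic level, identical to the paper's: both first establish convergence of the unconstrained networks $(v_\vTheta,\vpsi_\vTheta,\eta_\vTheta)$ and then transfer to the admissible triple via Lemma~\ref{lem:convergence-a} and the split $\nabla\bigl(\dD\, a(v_\vTheta)\bigr) = a(v_\vTheta)\,\nabla\dD + \dD\,\nabla\bigl(a(v_\vTheta)\bigr)$. Where you genuinely depart from (and improve on) the paper is the intermediate step: the paper asserts outright that $\vTheta\mapsto(v_\vTheta,\vpsi_\vTheta,\eta_\vTheta)$ is continuous into $W^{1,\infty}(\Omega)\times W^{1,\infty}(\Omega;\R^d)\times W^{1,\infty}(\Omega)$, which is automatic for $C^1$ activations but fails for merely Lipschitz ones such as ReLU, since perturbing $\vTheta$ shifts the kink hyperplanes and $\sigma'(A_v x+c_v)$ then changes on a set of positive measure, so the gradients do not converge in $L^\infty$. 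Your argument --- explicit gradient formula, pointwise a.e.\ convergence off a finite union of hyperplanes, uniform $L^\infty$ bound from $|\vTheta|\le R$, dominated convergence on the bounded $\Omega$ --- produces the $H^1$-convergence that is actually used downstream and is valid under the paper's stated Lipschitz hypothesis on $\sigma$, modulo the tacit (and practically harmless) extra assumption that the discontinuity set of $\sigma'$ has Lebesgue measure zero. Your handling of the degenerate rows of $A_v$ and the direct Lipschitz estimate $\|a(\eta_{\vTheta_k})-a(\eta_\vTheta)\|\le\|\eta_{\vTheta_k}-\eta_\vTheta\|$ for the $\gamma$-component are also cleaner than the paper's appeal to Lemma~\ref{lem:convergence-a} there. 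In short, this is a tightening of the paper's proof rather than a different route.
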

\begin{proof}
We recall that functions in $W_m$ are constructed from neural network functions in $C_m$ by \eqref{eq:admissible-class-W},
\[
u_{\vTheta} = g + \dD a(v_{\vTheta}), \quad \vphi_{\vTheta} = \vpsi_{\vTheta}, \quad \gamma_\vTheta = \Div \vpsi_{\vTheta} + a(\eta_\vTheta), \quad (v_{\vTheta},\vpsi_{\vTheta}, \eta_\vTheta)\in C_m,
\]
and that $W_m$ is equipped from the topology from $H^1_0(\Omega) \times  L^2(\Omega; \, \R^d) \times L^2(\Omega)$.

Our choice of $C_m$ in $\eqref{eq:C_particular}$ readily implies that the map $\vTheta \mapsto (v_{\vTheta},\vpsi_{\vTheta}, \eta_\vTheta)$ is continuous with respect to the $W^{1,\infty}(\Omega) \times W^{1,\infty}(\Omega; \R^d) \times W^{1,\infty}(\Omega)$ norm.
This readily implies the continuity of the map $\vTheta \mapsto \vphi_\vTheta$ with respect to the $L^2(\Omega; \R^d)$ norm. Furthermore, by applying Lemma \ref{lem:convergence-a}, the continuity of the map  $\vTheta \mapsto \gamma_\vTheta$ with respect to the $L^2(\Omega)$ norm is straightforward.

Finally, proving the continuity of the map $\vTheta \mapsto u_\vTheta$ in $H^1_0(\Omega)$ requires showing the continuity of the correspondence $\vTheta \to \nabla(\dD (a(v_\vTheta)))$ in $L^2 (\Omega; \R^d)$. This, in turn, can be proven by splitting 
\[
 \nabla(\dD (a(v_\vTheta))) = \nabla \dD \,  a(v_\vTheta) + \dD \nabla a(v_\vTheta),
\]
using the uniform boundedness of $\dD$ and $\nabla \dD$, and Lemma \ref{lem:convergence-a}.
\end{proof}

 
 \subsubsection{A second example.}
In proving Proposition \ref{prop:approx_prop_W_m}, we relied on classical neural network approximation properties in the $H^1$ norm, though only $L^2$ approximation properties were essential for the term $\eta_{\vTheta}$; similarly, the continuity of the map $\vTheta \mapsto \eta_{\vTheta}$ we showed in Proposition \ref{prop:cont_prop_W_m} is only needed with respect to the $L^2$ norm. Thus, as $\eta_{\vTheta}$ involves no differential operator, its architecture necessitates only $L^2$ stability and approximation capabilities. 

We now focus on the specific construction of the neural network to compute the function $\eta_{\vTheta}$, which we recall gives rise to the approximation of the Lagrange multiplier $\lambda$ in the least squares functional \eqref{eq:def-calJ} through the relation $\lambda = a(\eta) - \Div \vphi$. More precisely,
the remainder of this section provides an analysis of $L^2$ approximation properties and stability --in the sense of hypotheses \eqref{eq:hypothesis1} and \eqref{eq:hypothesisX}, respectively-- of a certain class of Deep Neural Networks (DNN) that we use in our numerical experiments. 

Regarding the approximation property \eqref{eq:hypothesis1}, our proof proceeds in an indirect way by showing that the network is capable of generating the space of piecewise constants finite elements. This, in turn, yields, as a by-product, the $L^2$ approximation property we need. There are several by now classical results
\cite{Cybenko89, hornik1991, Barron93}
regarding the approximation properties of
neural networks, although without the
incorporation of boundary conditions.
We additionally point out to \cite{DeHaPe,HeLiXu,He_etal20, Yarotsky17} for recent results concerning upper and lower bounds for the expressive power of deep ReLU networks in the context of approximation in Sobolev spaces.
References
\cite{He_etal20,Arora18} establish the capability of networks to represent simplicial linear finite element functions,
which possess good approximation properties in the
$H^1$-norm. In  \cite{Arora18}, it is shown that at the expense of a large enough number of neurons, any continuous piecewise linear function in $\R^d$
can be written by using at most $\lceil \log_2(d+1) \rceil$ hidden layers. In \cite{HeLiXu}, on the other hand, piecewise linear finite element basis are
explicitly written in terms of  ReLU DNN functions with a cost proportional to $\lceil \log_2(\kappa_h) \rceil$, being  $\kappa_h$
the maximum number of elements sharing a nodal point. This, in turn, gives a representation of arbitrary linear finite element functions on regular meshes with an overall cost proportional to the number of nodal points. Moreover, in \cite{He_etal20}, the article  \cite{Yarotsky17} is  read in the context of hierarchical finite elements and the authors show, in particular, that finite element basis in 2d can be recovered by means of only two hidden layers.

We introduce a type of DNN that can be regarded as a ReLU DNN in which the first activation function is replaced by a step function $\sigma_s$ (i.e. $\sigma_s =H$ is the Heaviside function). Namely, functions in this particular kind of hybrid network, which hereafter we denote by $\HNN_k$, are written as
\begin{equation}
	\label{hybrid-dnn}
	f(x) = \Theta^{k}\circ \sigma \circ \Theta^{k-1} \circ \sigma \cdots \circ \Theta^1 \circ \sigma_s \circ \Theta^0(x).
\end{equation}  
Here, $\Theta^j:\R^{n_j}\to \R^{n_{j+1}}$, are affine mappings and each activation function is applied componentwise. In the neural network jargon, the number $w=\max_{1\le i \le k}\{n_{i}\}$ is usually referred to as the width of the network, the total number of neurons $s=\sum_{0\le i \le k}n_i$ is called the neural network size, and the number of hidden layers is $k$. In our case, $\R^{n_0}=\R^d$ and $\R^{n_{k+1}}=\R$.

Despite possessing good approximation properties, function spaces defined by neural networks with piecewise constant activations pose a significant optimization challenge. Standard gradient-based algorithms fail because the activation derivatives are zero almost everywhere. To circumvent this issue, we employ a variant of the Straight-Through Estimator (STE), a common heuristic for training such networks \cite{QNN}. We elaborate on the implementation of this approach in Section \ref{sec:numerical}.

Next we show, by means of elementary arguments, that for $k\ge \lceil \log_2(d+1) \rceil$, the set $\HNN_k$ contains a space of piecewise constant functions  in $\R^d$, a result that, we believe, could be of interest in itself.  Given a bounded, Lipschitz domain $\Omega\subset \mathbb{R}^d$, we consider a partition   ${\mathcal T}$ of $\Omega$ by means of closed convex polytopes, with the property
 \[
\overline\Omega=\bigcup_{\tau \in {\mathcal T}_h}\tau, \quad \tilde\tau ^o \cap \tau^o= \emptyset \ \mbox{ for all } \tilde\tau, \tau\in \mathcal T \mbox{ with } \tilde\tau\neq\tau.
\]
We are interested in exploiting the relation to standard finite element spaces with piecewise constant functions, in particular their ability to approximate functions in $L^2$ spaces. Thus, in the sequel we assume that each $\tau$ --which we will refer to as an element-- is a simplex in $\mathbb{R}^d$, namely the convex hull of $d+1$ non collinear points $\{v_0,v_1,\cdots,v_d\}$, called henceforth the vertices of $\tau$; nevertheless, our argument holds for any polytopal partition of $\Omega$. 

Following standard finite element notation, we denote the size of an element by $h_\tau=diam (\tau)$, while $ h=\max_{\tau\in \mathcal T} h_{\tau}$ stands for the maximum element size in the triangulation $\mathcal{T}$.  Given $\Omega$ and $\mathcal T$, the space $P_0$ consists of functions that take a constant value on each $\tau\in \mathcal T$. In particular, here we assume that these functions are defined in $\R^d$ considering their extension by zero outside $\Omega$.

\begin{proposition}[recovering characteristics] \label{prop:step}
Let $d\ge 1$ and $\tau$ a simplex in $\R^d$. Then, the characteristic function $\chi_\tau$ of $\tau$ can be written by means of an  $\HNN_k$ with $k\sim \mathcal{O} (\lceil \log_2(d+1)\rceil)$ and size  $s\sim  \mathcal{O}(2(d+1))$.
\end{proposition}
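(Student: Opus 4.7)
The strategy is to express $\chi_\tau$ as the logical AND of the indicators of $d+1$ closed half-spaces, and to realize this Boolean formula as an $\HNN_k$ of the prescribed depth and size. Since $\tau$ is the convex hull of $d+1$ affinely independent vertices $\{v_0,\ldots,v_d\}$, elementary convex geometry yields the representation
\[
\tau \;=\; \bigcap_{i=0}^{d} H_i, \qquad H_i := \{x\in\R^d : a_i\cdot x + b_i \ge 0\},
\]
where each $H_i$ is the closed half-space bounded by the hyperplane supporting the facet opposite $v_i$, oriented so that $v_i \in H_i$. Consequently, $\chi_\tau(x) = \prod_{i=0}^{d}\chi_{H_i}(x)$ for every $x \in \R^d$.

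Next, I would define the initial affine map $\Theta^0 : \R^d \to \R^{d+1}$ componentwise by $(\Theta^0 x)_i := a_i\cdot x + b_i$. Adopting the Heaviside convention $\sigma_s(t) = 1$ for $t\ge 0$ and $\sigma_s(t)=0$ otherwise, the first hidden layer outputs the binary vector $(\chi_{H_0}(x),\ldots,\chi_{H_d}(x)) \in \{0,1\}^{d+1}$ using $d+1$ neurons. To aggregate these into $\chi_\tau(x)$ I would use a balanced binary tree of ReLU-based AND gates, whose elementary building block is the map $(y_1,y_2)\mapsto \mathrm{ReLU}(y_1+y_2-1)$; a direct check on $\{0,1\}^2$ confirms this equals $y_1\wedge y_2$. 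When a level contains an odd number of entries, the leftover value is forwarded unchanged by means of $\mathrm{ReLU}(y)=y$ on $[0,1]$. After $\lceil \log_2(d+1)\rceil$ such ReLU layers, the tree reduces its inputs to the single scalar $\chi_\tau(x)$, which is then emitted by $\Theta^k$ acting as the identity on $\R$.

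Adding up, the depth is $k = 1 + \lceil \log_2(d+1)\rceil \sim \mathcal{O}(\lceil\log_2(d+1)\rceil)$, while the total number of neurons in the hidden layers is bounded by the geometric sum
\[
(d+1) + \lceil(d+1)/2\rceil + \lceil(d+1)/4\rceil + \cdots \le 2(d+1),
\]
matching the stated size. The argument is essentially combinatorial and presents no substantial analytical obstacle; the only points meriting care are (i) orienting each defining half-space so that the inequality is non-strict, thus recovering $\chi_\tau$ also on $\partial\tau$, and (ii) handling the odd-count pass-through in the tree, already reduced to the trivial identity $\mathrm{ReLU}(y)=y$ for $y\in[0,1]$.
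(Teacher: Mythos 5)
Your proof is correct and follows the same strategy as the paper's: write $\tau$ as the intersection of the $d+1$ half-spaces supporting its facets, realize the indicators $\chi_{H_i}$ with a single Heaviside layer, and aggregate via a balanced binary tree of depth $\lceil\log_2(d+1)\rceil$ using ReLU layers. The one genuine difference is the aggregation gate: the paper realizes $\chi_\tau = \min_i \mu_i$ through the identity $\min\{a,b\} = \tfrac{a+b}{2} - \tfrac{|a-b|}{2} = \vv\cdot\mathrm{ReLU}\bigl(W[a\;b]^{\top}\bigr)$, which costs four ReLU neurons per node, whereas you exploit the binary range of the Heaviside outputs and use the one-neuron AND gate $(y_1,y_2)\mapsto \mathrm{ReLU}(y_1+y_2-1)$, with $\mathrm{ReLU}(y)=y$ as the pass-through when a level is odd. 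This is a small but real simplification that tightens the constant in the size bound.
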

\begin{proof}
For $d=1$ we write $\tau=[v_0,v_1]$, where we can assume $v_0<v_1$. Obviously,
$\chi_\tau(x)=H(x-v_0)-H(x-v_1)$ where $H$ is the Heaviside function and then  $\chi_\tau\in \HNN_1$. Next, we focus on the case $d=2$, since for an arbitrary spatial dimension $d$ the argument is the same. Let $\tau$ be a triangle of vertices $\{v_0,v_1,v_2\}$. We call $L_i$ the line joining $v_i$ and $v_{i+1}$ (indices modulo $3$) and $\phi_i$ a linear function
$\phi_i:\R^2\to \R$ such that $L_i=\{x:\phi_i(x)=0\}$; we may assume that $\phi_i(x)>0$ in the interior of $\tau$. We consider $\mu_i =H \circ \phi_i$ and notice that, by construction, $\mu_i(x)=1$ for $x\in \tau$ and $i=0,1,2$, while if $x\notin \tau$ then there exists $j \in \{0,1, 2\}$ such that  $\mu_j(x)=0$. Therefore, we can write 
\[
\chi_{\tau}(x)=\min_{0\le i\le 2}\{\mu_i(x)\}.
\] 
Now, following \cite{He_etal20}, we use the fact that
 	\begin{equation}\label{min_dos}
		\min\{a,b\} = \frac{a+b}{2} -  \frac{|a-b|}{2}= \vv\cdot \mbox{ReLU} \left(W \begin{bmatrix}a \\ b \end{bmatrix}\right),
	\end{equation}
where
\[
	\vv = \frac{1}{2}\begin{bmatrix} 1 & -1 & -1 &-1 \end{bmatrix} ,\qquad W =
	\begin{bmatrix}
	1  & 1\\
	-1 &-1\\
	1  &-1\\
	-1 & 1\\
	\end{bmatrix}. \]
 Since
 \[ \min\{\mu_0(x),\mu_1(x),\mu_2(x)\}=\min\{\mu_0(x),\min\{\mu_1(x),\mu_2(x)\}\},\]
 we immediately observe that
 \[
 \chi_{\tau}(x)= \vv\cdot \mbox{ReLU} \left(W \begin{bmatrix}\mu_0(x) \\ \vv\cdot \mbox{ReLU} \left(W \begin{bmatrix}\mu_1(x) \\ \mu_2(x) \end{bmatrix}\right) \end{bmatrix}\right)
 \in \HNN_3.
 \]
 
 For a general dimension $d\ge 3$, the argument follows along the same lines by noticing that
 \[
 \min\{\mu_0(x),\mu_1(x),\ldots, \mu_d(x)\}=\min\left\{\min\{\mu_0(x)\ldots, \mu_{\lfloor (d+1)/2\rfloor-1}(x)\},\min\{\mu_{\lfloor (d+1)/2\rfloor}(x)\ldots, \mu_d(x)\} \right\}
 \]
and iterating the previous procedure. 
This construction gives rise to a binary tree with total size $s\sim 2^{\log_2(d+1)+1}$.
\end{proof}

\begin{remark} A well-known result \cite[Section 7]{pinkus}, which is similar to Proposition \ref{prop:step}, is that the characteristic function of any convex polytope can be represented by a two-hidden-layer network using the Heaviside function as an activation function. 
\end{remark}

For a given triangulation $\mathcal T$, the dimension of the space of piecewise constant functions associated to it has the same cardinality as $\mathcal T$. Thus, we arrive to the following result.

\begin{corollary}[recovering piecewise constant functions] Assume $\Omega\subset \R^d$ is a bounded polytope, and
let $\mathcal T$ be a triangulation of $\Omega$ with $N:= \# \mathcal T$. Then,  the space of piecewise constant functions on $\tau$ can be generated by an
 $\HNN_k$ with  $k\sim  (\lceil log_2(d+1)\rceil)$ and size $s\sim 2N(d+1)$. 
 \end{corollary}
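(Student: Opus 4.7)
The plan is to construct any piecewise constant function on $\mathcal{T}$ as a linear combination of the characteristic functions $\{\chi_{\tau_i}\}_{i=1}^N$ and then realise the whole combination within a single $\HNN_k$ whose depth matches that of the single-simplex network given by Proposition \ref{prop:step}.

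First I would enumerate $\mathcal{T}=\{\tau_1,\ldots,\tau_N\}$ and apply Proposition \ref{prop:step} to each $\tau_i$ individually. This produces, for each $i$, a hybrid network of depth $k_0:=\lceil\log_2(d+1)\rceil$ and size of order $2(d+1)$, whose output is $\chi_{\tau_i}$ and whose activations are the Heaviside $\sigma_s$ in the first hidden layer and $\mathrm{ReLU}$ in the remaining layers, as required by the definition of $\HNN_{k_0}$.

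Next I would concatenate these $N$ subnetworks in parallel into a single $\HNN_{k_0}$. At every hidden layer I would arrange the affine maps $\Theta_i^j$ of the $N$ subnetworks as block-diagonal operators $\mathrm{diag}(\Theta_1^j,\ldots,\Theta_N^j)$ acting on the common input $x\in\R^d$ (duplicated through a block-replication in $\Theta^0$). Since activations act componentwise, this parallelisation preserves the depth $k_0$ while multiplying the number of neurons per layer by a factor proportional to $N$; the total size is therefore $s\sim 2N(d+1)$. The output of this network is the vector $(\chi_{\tau_1}(x),\ldots,\chi_{\tau_N}(x))$.

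Finally, to represent an arbitrary piecewise constant function $\sum_{i=1}^N c_i\chi_{\tau_i}$ I would absorb the coefficients $c_i$ into a final linear map $\R^N\to\R$ composed with the last affine transformation $\Theta^{k_0}$ of the parallel network. As $(c_1,\ldots,c_N)$ varies over $\R^N$, one recovers every element of the $N$-dimensional space of piecewise constant functions on $\mathcal{T}$, while depth and size remain as claimed. The only subtle point is to ensure that the convention of extension by zero outside $\Omega$ is respected; this is automatic because each $\chi_{\tau_i}$ constructed in Proposition \ref{prop:step} already vanishes outside the convex hull of the vertices of $\tau_i\subset\Omega$, and since $\mathcal{T}$ partitions $\overline{\Omega}$, the resulting sum vanishes outside $\Omega$ as well. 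No significant obstacle beyond careful bookkeeping of layer widths is expected.
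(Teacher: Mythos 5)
Your proposal is correct and coincides with the paper's (essentially one-sentence) reasoning: the paper passes directly from Proposition \ref{prop:step} to the corollary by noting that the dimension of the piecewise constant space equals $\#\mathcal T$, and the intended construction is exactly the parallel stacking you describe, with the coefficients absorbed into the final affine map so that the depth stays at $k\sim\lceil\log_2(d+1)\rceil$ and the size scales to $s\sim 2N(d+1)$.
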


Since the space of piecewise constant functions on a triangulation of $\Omega$ is capable of approximation in the $L^2$-norm, we conclude that \eqref{eq:hypothesis1} remains valid if we use $\HNN_k$ networks for the computation of the variable $\eta$.

\begin{corollary}[approximation with $\HNN_k$] \label{cor:app-HNNk}
Assume $\Omega\subset \R^d$ is a bounded polytope. Given $\eta \in L^2(\Omega)$ and $k\sim  (\lceil log_2(d+1)\rceil)$, it holds that
\[
\lim_{s \to \infty} \inf_{\eta_\vTheta \in \HNN_k} \| \eta - \eta_\vTheta \| = 0. 
\]
\end{corollary}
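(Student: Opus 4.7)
The plan is to combine two standard ingredients: the density of piecewise constant finite element functions in $L^2(\Omega)$ when $\Omega$ is a bounded polytope, and the previous corollary, which expresses each such piecewise constant function as an element of $\HNN_k$ with $k \sim \lceil \log_2(d+1) \rceil$ and size $s \sim 2N(d+1)$, where $N$ is the number of simplices in the underlying triangulation. The only parameter that varies with the mesh refinement is the size $s$, while the depth $k$ remains fixed at $\lceil \log_2(d+1) \rceil$, which is precisely the regime in which the infimum is being taken.

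More concretely, I would proceed as follows. Let $\eta \in L^2(\Omega)$ and fix $\eps > 0$. Since $\Omega$ is a bounded polytope, it admits a sequence of simplicial triangulations $\{\mathcal{T}_h\}_{h>0}$ with $h \to 0$, and the corresponding sequence of $L^2$-orthogonal projections $\Pi_h \eta$ onto the piecewise constant spaces $P_0(\mathcal{T}_h)$ satisfies $\| \eta - \Pi_h \eta \| \to 0$ as $h \to 0$. Therefore I can choose a triangulation $\mathcal{T}$ with $N := \#\mathcal{T}$ simplices and a function $\eta_h \in P_0(\mathcal{T})$ such that $\| \eta - \eta_h \| < \eps$.

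By the preceding corollary, $\eta_h$ admits a representation as an element of $\HNN_k$ with $k \sim \lceil \log_2(d+1) \rceil$ and size $s_0 \sim 2N(d+1)$. Since for any $s \ge s_0$ the class $\HNN_k$ of networks of depth $k$ and size at most $s$ contains $\eta_h$ (we can always pad the hidden layers with inactive units without changing the realized function), we obtain
\[
\inf_{\eta_\vTheta \in \HNN_k,\ \mathrm{size} \le s} \| \eta - \eta_\vTheta \| \le \| \eta - \eta_h \| < \eps \qquad \text{for all } s \ge s_0.
\]
Letting $\eps \to 0$ (and $h \to 0$, $s \to \infty$ accordingly) yields the claim.

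I do not anticipate a genuine obstacle: the argument is a straightforward composition of the density of $P_0$ in $L^2$ on a polytopal domain with the explicit embedding $P_0(\mathcal{T}) \subset \HNN_k$ provided by the previous corollary. The only minor point to verify is the monotonicity of the $\HNN_k$ classes in the size parameter $s$, which follows because adding neurons whose outgoing weights are zero does not alter the realized function; this legitimizes passing from "there exists some network of size $s_0$" to "for all $s \ge s_0$ the infimum is below $\eps$" and hence to the stated limit as $s \to \infty$.
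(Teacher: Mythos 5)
Your proposal is correct and follows essentially the same route as the paper: combine the $L^2$-density of piecewise constant functions on simplicial triangulations of the bounded polytope $\Omega$ with the preceding corollary's observation that any $P_0(\mathcal{T})$ function can be realized by an $\HNN_k$ of depth $k \sim \lceil \log_2(d+1) \rceil$ and size $s \sim 2N(d+1)$, then let the mesh be refined so $s \to \infty$. The paper leaves the argument implicit as an immediate consequence; your note on padding with inactive neurons to make the infimum monotone in $s$ is a sensible, if routine, addition.
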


Our next goal is to show that the class $\HNN_k$ also allows to recover hypothesis \eqref{eq:hypothesisX}: we prove the almost everywhere continuity of elements belonging to $\HNN_k$, in the $L^2$ norm, with respect to the network parameters.
In order to state the next result, we introduce the following notation: for each element  $\vTheta = (\va,b) \in \R^{d} \times \R$, we define $f_{\vTheta} \colon \Omega \to \R$ by
$$f_{\vTheta}(x) := H(\va \cdot x + b),$$
with  $H$  the Heaviside function. We notice that if $\va=\bm{0}$ then $f_{\vTheta}(x)$ degenerates to a  constant.

\begin{lemma}[nondegeneracy] \label{lem:continuity}
Let  $\Omega \subset \R^d$ be a domain of finite measure, then 
  
\begin{enumerate}[1.]
    \item for any $\vTheta_0=(\va_0, b_0)$ with $\va_0 \neq \bm{0}$ and any sequence $\{\vTheta_{n}\}_{n \in \N} = \{(\va_n,b_n)\}_{n\in\N}$ converging to $\vTheta_0$, $f_{\vTheta_n}\to f_{\vTheta_0}$ almost everywhere in $\Omega$;
   
    \item if, in addition, $\Omega$ is bounded, then for any degenerate parameter  $\vTheta_0=(\bm{0}, b_0)$  there exists  a non-degenerate   $\vTheta' = (\va',b')$ (i.e. with $\va' \neq \bm{0}$) such that $f_{\vTheta_0} \equiv f_{\vTheta'}$ in $\Omega$.
\end{enumerate}
\end{lemma}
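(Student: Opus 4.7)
The plan is to treat the two items separately, as they rely on different elementary observations about the Heaviside function $H$ and the affine map $(\va,b)\mapsto \va\cdot x + b$.

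For item 1, the key observation is that $H$ is continuous on $\R \setminus \{0\}$, so $f_{\vTheta_0}$ is continuous at every $x$ where $\va_0 \cdot x + b_0 \ne 0$. I would define the bad set
\[
E := \{ x \in \Omega \,\colon\, \va_0 \cdot x + b_0 = 0 \}.
\]
Because $\va_0 \neq \bm 0$, $E$ is contained in an affine hyperplane of $\R^d$ and thus has Lebesgue measure zero. For any fixed $x \in \Omega \setminus E$, $\va_0 \cdot x + b_0$ is either strictly positive or strictly negative; since $(\va,b)\mapsto \va\cdot x + b$ is continuous jointly in the parameters, $\va_n \cdot x + b_n$ preserves that sign for all $n$ large enough, whence $f_{\vTheta_n}(x) = f_{\vTheta_0}(x)$ eventually. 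This yields pointwise convergence on $\Omega \setminus E$, which is convergence almost everywhere.

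For item 2, when $\va_0 = \bm 0$ the function $f_{\vTheta_0}$ is the constant $H(b_0) \in \{0,1\}$ on $\Omega$, so the task is to exhibit a non-degenerate pair $(\va',b')$ for which the affine function $x \mapsto \va' \cdot x + b'$ has constant sign (matching $H(b_0)$) on all of $\Omega$. Since $\Omega$ is bounded, for any fixed $\va' \neq \bm 0$ (say, $\va' = \ve_1$) the quantity $M := \sup_{x \in \Omega} |\va' \cdot x|$ is finite. Then $\va' \cdot x + b' \ge 1 > 0$ on $\Omega$ if one takes $b' := M+1$, while $\va' \cdot x + b' \le -1 < 0$ on $\Omega$ if one takes $b' := -M-1$. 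Choosing the sign of $b'$ according to whether $H(b_0) = 1$ or $H(b_0)=0$ yields the required non-degenerate $\vTheta'$ with $f_{\vTheta'} \equiv f_{\vTheta_0}$ on $\Omega$.

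Neither step presents a real obstacle: item 1 is essentially a hyperplane-has-measure-zero argument combined with continuity of the sign away from the discontinuity of $H$, while item 2 is an explicit construction using only the boundedness of $\Omega$. The only mild subtlety is handling the boundary value $H(0)$ consistently across conventions; since the construction in item 2 produces strict inequalities, the conclusion is insensitive to this choice.
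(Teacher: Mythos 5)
Your proof is correct and follows essentially the same route as the paper: item 1 via the observation that the zero set of $\va_0\cdot x + b_0$ lies in a hyperplane (hence is null) together with eventual sign preservation, and item 2 via an explicit non-degenerate pair obtained from the boundedness of $\Omega$. The only cosmetic difference is that you treat both signs of $H(b_0)$ explicitly, whereas the paper disposes of one case ``without loss of generality.''
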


\begin{proof}
 We first prove {\it 1}. Assuming $\va_0 \neq \bm{0}$, we decompose $\Omega$ into three disjoint measurable sets:
\begin{align*}
\Pi^0 &:= \{x \in \Omega : \va_0 \cdot x + b_0 = 0\}, \\
\Pi^+ &:= \{x \in \Omega : \va_0 \cdot x + b_0 > 0\}, \\
\Pi^- &:= \{x \in \Omega : \va_0 \cdot x + b_0 < 0\},
\end{align*}
yielding the partition $\Omega = \Pi^0 \cup \Pi^+ \cup \Pi^-$ into disjoint sets. The non-degeneracy condition $\va_0 \neq \bm{0}$ ensures that $\Pi^0$ is contained in an affine hyperplane, and therefore has $d$-dimensional Lebesgue measure zero.

For each $x \in \Pi^+$, the strict positivity $\va_0 \cdot x + b_0 > 0$ implies, by continuity of the inner product, that there exists $N_x \in \N$ such that $\va_n \cdot x + b_n > 0$ for all $n \geq N_x$. Consequently, $f_{\vTheta_n}(x) = H(\va_n \cdot x + b_n) \to 1 = f_{\vTheta_0}(x)$ for all $x \in \Pi^+$. An identical argument shows that $f_{\vTheta_n}(x) \to 0 = f_{\vTheta_0}(x)$ for all $x \in \Pi^-$.
Since $\Pi^0$ is a null set and the convergence holds pointwise on $\Pi^+ \cup \Pi^-$, we conclude that $f_{\vTheta_n} \to f_{\vTheta_0}$ almost everywhere in $\Omega$ with respect to the Lebesgue measure.

Consider now {\it 2}. In this case,  $\vTheta_0 = (\bm{0}, b_0)$. Without loss of generality, we may assume $b_0 \geq 0$, which immediately gives $f_{\vTheta_0} \equiv 1$ over $\Omega$. Since $\Omega$ is bounded, we can construct an explicit non-degenerate counterpart $\vTheta' = (\bm{1}, b')$ by choosing $\bm{1} = (1,\ldots,1) \in \R^d$ and $b' > \sup_{x \in \Omega} \|\bm{1} \cdot x\|_{L^\infty(\Omega)}$. This guarantees the strict positivity
\[
\bm{1} \cdot x + b' > 0 \quad \text{for all } x \in \Omega,
\]
and consequently $f_{\vTheta'} \equiv 1$. Thus, the equivalence $f_{\vTheta_0} \equiv f_{\vTheta'}$ holds in $\Omega$.  \end{proof}

The next theorem establishes the $L^2$-continuity of any $\HNN_k$-type neural network with respect to the parameters.
\begin{theorem}[almost everywhere continuity]\label{teo:ae_continuity}
    Consider a sequence $\{\vTheta_n\}_{n\in\N} \subset \R^m$ converging to some $\vTheta_0 \in \R^m$, where $v_{\vTheta_n}$ and $v_{\vTheta_0}$ are $\HNN_k$-type neural networks parameterized by $\vTheta_n$ and $\vTheta_0$, respectively. For any domain $\Omega \subset \R^d$ with finite measure, there exists a set $\calN \subset \R^m$  such that:
\begin{enumerate}[1.]
    \item if $\vTheta_0 \notin \calN$, then $v_{\vTheta_n} \to v_{\vTheta_0}$ in $L^2(\Omega)$;
    \item if $\Omega$ is bounded, each $\vTheta \in \calN$ admits $\vTheta' \notin \calN$ satisfying $v_{\vTheta} \equiv v_{\vTheta'}$ in $\Omega$.
\end{enumerate}
Moreover, $\calN$ is a finite union of subspaces of  $\R^m$, each one of dimension $m-d$  (in particular $\calN$ is a null set with respect to the Lebesgue measure).
\end{theorem}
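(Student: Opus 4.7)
The plan is to localize the discontinuity in the $\HNN_k$ architecture to its first hidden layer, which is the only one using the Heaviside activation, and to apply Lemma \ref{lem:continuity} one neuron at a time. I write the initial affine map as $\Theta^0(x) = A^0 x + c^0$, with $A^0 \in \R^{n_1 \times d}$, and denote by $\va_j^0 \in \R^d$ its $j$-th row. I then set
\[
\calN := \bigcup_{j=1}^{n_1} \{\vTheta \in \R^m : \va_j^0 = \bm{0}\}.
\]
Each set in the union is the kernel of the $d$ coordinate linear functionals extracting $\va_j^0$, hence a linear subspace of codimension $d$; thus $\calN$ is a finite union of $n_1$ subspaces of dimension $m-d$, in particular a Lebesgue null set.

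For the first claim, I assume $\vTheta_0 \notin \calN$ and $\vTheta_n \to \vTheta_0$. The $j$-th component of $\sigma_s \circ \Theta_n^0(x)$ is $H(\va_{j,n}^0 \cdot x + c_{j,n}^0)$; since $\va_{j,0}^0 \neq \bm{0}$, Lemma \ref{lem:continuity}(1) gives pointwise a.e.\ convergence on $\Omega$ to $H(\va_{j,0}^0 \cdot x + c_{j,0}^0)$. The subsequent composition $\Theta^k \circ \sigma \circ \cdots \circ \sigma \circ \Theta^1$ is jointly continuous in both its input and its parameters, as a composition of affine maps and ReLU, so $v_{\vTheta_n}(x) \to v_{\vTheta_0}(x)$ almost everywhere on $\Omega$. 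Moreover, the first-layer output lies in $\{0,1\}^{n_1}$ independently of $\vTheta$, and the downstream parameters are bounded for $n$ large; a straightforward induction on layers yields a uniform bound $|v_{\vTheta_n}(x)| \leq M$ on $\Omega$. Since $|\Omega| < \infty$, dominated convergence gives the desired $L^2(\Omega)$ convergence.

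For the second claim, I fix $\vTheta \in \calN$ with $\Omega$ bounded and let $J := \{j : \va_j^0 = \bm{0}\}$. For each $j \in J$, the associated first-layer output is the constant $H(c_j^0)$ on $\Omega$, and Lemma \ref{lem:continuity}(2) supplies a non-degenerate pair $(\va', b')$ such that $H(\va' \cdot x + b') \equiv H(c_j^0)$ on $\Omega$. I then build $\vTheta' \notin \calN$ by replacing, for each $j \in J$, the $j$-th row of $A^0$ and the $j$-th entry of $c^0$ with the pair produced by the lemma, leaving the remaining parameters untouched. Every downstream layer reads only the first-layer output, so $v_{\vTheta'} \equiv v_{\vTheta}$ on $\Omega$.

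The main technical point I anticipate is verifying the uniform $L^\infty$ bound needed for dominated convergence. This is routine once one observes that the bounded first-layer image $\{0,1\}^{n_1}$, combined with convergent (hence bounded) downstream parameters, propagates through the iterated affine/ReLU layers to produce such a bound. Beyond this, the argument simply combines the neuron-wise a.e.\ convergence delivered by Lemma \ref{lem:continuity} with the joint continuity of the remaining ReLU/affine composition.
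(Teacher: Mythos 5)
Your proof takes essentially the same route as the paper: same definition of $\calN$ as the union over first-layer neurons of the sets $\{\va_j^0 = \bm{0}\}$, same application of Lemma \ref{lem:continuity} neuron-by-neuron for both the a.e.\ convergence on $\calN^c$ and the non-degenerate replacement on $\calN$, and same dominated-convergence step to upgrade a.e.\ convergence to $L^2$. The only cosmetic difference is that you justify the uniform $L^\infty$ bound directly via the bounded first-layer image $\{0,1\}^{n_1}$ and boundedness of the downstream parameters, whereas the paper points back to the argument in Lemma~\ref{lem:mayorante_integrable}; both are valid.
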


\begin{proof}
For any $\vTheta \in \R^m$, let $l_{\vTheta} \colon \Omega \to \R^{\ell}$ denote the first layer of an $\HNN_k$-type neural network, namely
\[
l_{\vTheta}(x) := \big(H( \bm a_1 \cdot x  + b_1), \ldots, H( \bm a_\ell \cdot x + b_\ell)\big),
\]
where $\bm a_i \in \R^d$ and $b_i \in \R$ for $i = 1,\ldots,\ell$. For $1 \leq i \leq \ell$, consider the sets of singular parameters,
\[
\calN_i := \{\vTheta \in \R^m : \bm a_i = \bm 0\}.
\]
We note that each $\calN_i $ is a  proper affine subspace of dimension $m-d$ and define  
$$\calN := \bigcup_{i=1}^\ell \calN_i.$$  

For $\vTheta_0 \in \R^m \setminus \calN$, Lemma~\ref{lem:continuity} guarantees the componentwise convergence $l_{\vTheta_n}(x) \to l_{\vTheta_0}(x)$ for almost every $x \in \Omega$. As the remaining layers of the $\HNN_k$ architecture consist of continuous activation functions, the almost everywhere convergence is preserved, that is
\begin{equation}
 \label{eq:aeConv}
 v_{\vTheta_n}(x) \to v_{\vTheta_0}(x) \quad \text{for a.e. } x \in \Omega.
\end{equation}

With the same procedure as in the proof of Lemma~\ref{lem:mayorante_integrable}, we first establish the uniform boundedness property
\[
\sup_{\|\vTheta\| \leq R} \|v_{\vTheta}\|_{L^\infty(\Omega)} < \infty \quad \text{for any } R > 0.
\]

Now, if  $\vTheta_n \to \vTheta_0$, consider the compact parameter set $\{\vTheta_0\} \cup \{\vTheta_n\}_{n\in\mathbb{N}}$, and let $R$ denote its radius. The preceding estimate yields a uniform bound $M > 0$ satisfying
\[
\max\left(\|v_{\vTheta_0}\|_{L^\infty(\Omega)}, \ \sup_{n\in\mathbb{N}} \|v_{\vTheta_n}\|_{L^\infty(\Omega)}\right) \leq M.
\]
Since $\Omega$ has finite measure, the constant function $M$ is integrable and therefore, formula \eqref{eq:aeConv} together with the dominated convergence theorem yield
\[
\lim_{n\to\infty} \|v_{\vTheta_n} - v_{\vTheta_0}\| = 0, 
\]
and the first assertion of this theorem follows.

When $\Omega$ is bounded, any parameter $\vTheta \in \calN$ must satisfy $\bm a_i = \bm 0$ for some non-empty subset of indices $I \subseteq \{1,\ldots,\ell\}$. Through Lemma~\ref{lem:continuity}, we may construct modified parameters by replacing each zero-weight pair $(\bm a_i, b_i)_{i\in I}$ with non-degenerate alternatives $(\bm a_i', b_i')_{i\in I}$, where $\bm a_i' \neq \bm 0$, to a new parameter $\vTheta'$.
This construction preserves the layer mapping exactly:
\[
l_{\vTheta}(x) = l_{\vTheta'}(x) \quad \forall x \in \Omega.
\]
Consequently, the full network implementations coincide:
\[
v_{\vTheta}(x) = v_{\vTheta'}(x) \quad \forall x \in \Omega ,
\]
with the key property that $\vTheta' \notin \calN$ avoids the singular set entirely.
\end{proof}

\begin{remark}
Theorem~\ref{teo:ae_continuity} establishes that the mapping $\vTheta \mapsto \eta_{\vTheta}$ is continuous almost everywhere in the parameter space. This result yields two key consequences for networks with step activation functions:

\begin{enumerate}
    \item Hypothesis \eqref{eq:hypothesisX} holds for almost every parameter configuration $\vTheta \in \R^m$;
    
    \item Restricting to parameters within the continuity set does not compromise the expressive capacity of the architecture, as each realizable function $\eta_{\vTheta}$ admits an equivalent representation $\eta_{\vTheta'}$ with $\vTheta'$ in the continuity set.
\end{enumerate}
Notably, the second point guarantees the validity of hypothesis \eqref{eq:hypothesis1} even when the parameter space is restricted to exclude the negligible set of degenerated points.
\end{remark}

\begin{remark}
While we stated Corollary \ref{cor:app-HNNk} and Theorem \ref{teo:ae_continuity} in the $L^2$ norm, which is the one we are interested in, it is clear that the same results are valid if one considers the $L^p$ norm for an arbitrary $p \in [1,\infty)$.
\end{remark}

\section{Computational experiments} \label{sec:numerical}

In this section, we present numerical results for the method proposed in Section \ref{sec:description}. We did not prioritize any specific neural network architecture, employing two- and three-layer fully connected networks with SoftPlus activation functions to construct $u_{\vTheta}$ and $\vphi_{\vTheta}$. For the construction of $\eta_{\vTheta}$, we used one- to three-layer networks with ReLU or step activation functions. Networks with more layers demonstrated better performance in high-dimensional problems. Across all experiments, we utilized $a(t) := t^2$, which has exhibited satisfactory performance. During training, the well-known ADAM algorithm \cite{ADAM} was employed to update the parameters, with the initial learning rate scheduled to decrease linearly to zero by the final iteration.

When step activation functions are used in the construction of $\eta_{\vTheta}$, gradient computation during training relies on the Straight-Through Estimator \cite{STE}. In this approach, the derivative of the Heaviside activation function, which vanishes almost everywhere, is replaced by the approximation $\frac{1}{c}\mathbb{I}_{[0,c]}(x)$ for a prescribed parameter $c>0$.

Although the convergence theory established in Section \ref{sec:analysis} employs the functional $\calL$, defined in \eqref{eq:def-calL}, our experiments show very similar results using both $\calL$ and $\calJ$, defined in \eqref{eq:def-calJ}. That is, we interchangeably used the discrete versions of these functionals, with $L_N$ being the discrete functional defined in \eqref{eq:discrete_cost}, and $J_N$ being the discrete functional defined as follows:
\[ \begin{split}
    	J_N (u, \vphi) :=  \frac{|\Omega|}{N} 
	\sum_{k=1}^N \ & \big( \Div{\vphi}(x_k) + \lambda(x_k) + f(x_k) \big)^2 + \big(\nabla u(x_k) - \vphi(x_k) \big)^2 \\ & + \lambda(x_k) \big(u(x_k)-g(x_k)\big). 
 \end{split}
\]
For this functional, we can compute $u_\vTheta$ and $\vphi_\vTheta$ as in \eqref{eq:admissible-class-W}, and we can simply use  $a(\eta_{\vTheta})$ to approximate $\lambda$.

The code used to implement the experiments in this section is available at Github \footnote{ \url{https://github.com/fbersetche/Deep-FOSLS-for-the-Obstacle-Problem-} }

\begin{example}[A high-dimensional problem]\label{sec:example1}
We consider the following high-dimensional instance of problem \eqref{eq:obstacle}. Let $\Omega = B_{R_0}(0) = \{ x \in \Rd : | x | < R_0 \}$ with $R_0 \in \R_{>0}$, and let $f_d \colon \R \to \R$ be such that $-\Delta f_d(|x|) = \delta_0$, that is, the $d$-dimensional fundamental solution of the Laplace operator, as a function of the radius, centered at $x = 0$. Let $0 < r_0 < R_0$, and define the polynomial $Q \colon \R \to \R$ as $Q(r) = ar^4 + br^2 + c$, with coefficients determined to satisfy the conditions
\begin{equation*}
	Q(r_0) = f_d(r_0) - f_d(R_0),\qquad
	Q'(r_0) = f'_d(r_0),\qquad
	Q(R_0) = 0.
\end{equation*}
By taking $g \colon \R^d \to \R$ as $g(x) := Q(|x|)$, the solution to problem \eqref{eq:obstacle} is
\begin{equation*}
u(x) = \left\lbrace \begin{aligned}
	Q(|x|) & \quad \text{ if } |x| \leq r_0, \\
	  f_d(|x|) - f_d(R_0)  & \quad \text{ if } r_0 < |x| \leq R_0.\\
\end{aligned} \right.
\end{equation*}

We first test our approach in a 10-dimensional domain ($d=10$). Figure \ref{fig:dim2} displays the results we obtained by constructing $u_{\vTheta}$ and $\vphi_{\vTheta}$ using 3-layer neural networks with 100 SoftPlus activation functions per layer, and a similar architecture for $\eta_{\vTheta}$, but using ReLU activation functions instead of SoftPlus. The SoftPlus activation function was used with parameter $\beta = 100$. At the end of the optimization algorithm, we computed the average value $L_N (\vTheta) = 5.193$ and an average $L^2$-error of 0.2705.

Figure \ref{fig:dim2_k2} corresponds to $d=20$, where we used the same architecture with 150 neurons per layer. We observed fast convergence in the number of iterations, reaching an average $L_N (\vTheta) = 9702.4$ and an average $L^2$-error of 8.155 by the end of the minimization algorithm.

\begin{figure}[h]
	\centering
	\begin{tabular}{|c|c|c|c|}
		\hline
		\subf{\includegraphics[width=49mm]{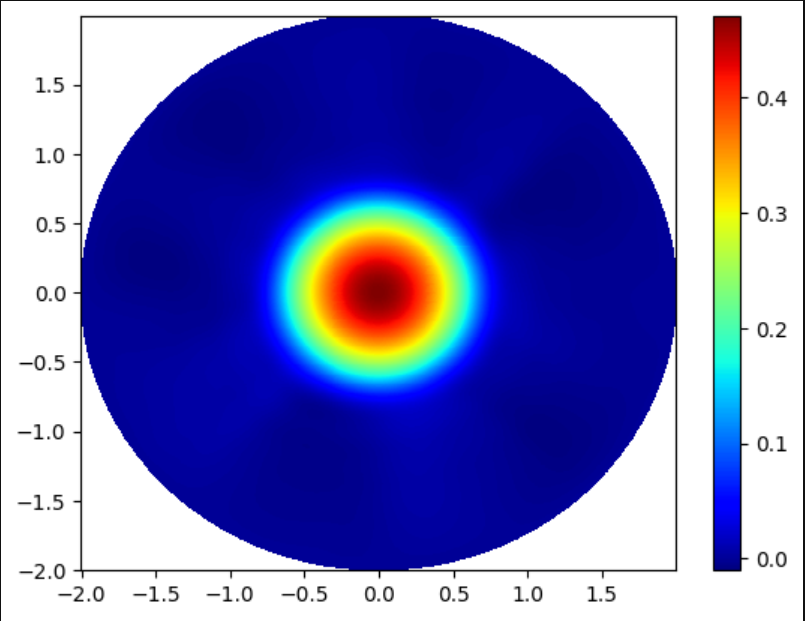}}
		{$ \restr{u_{\vTheta}}{\{x_3,\ldots,x_{10} = 0\}}$.}
		&
		\subf{\includegraphics[width=50mm]{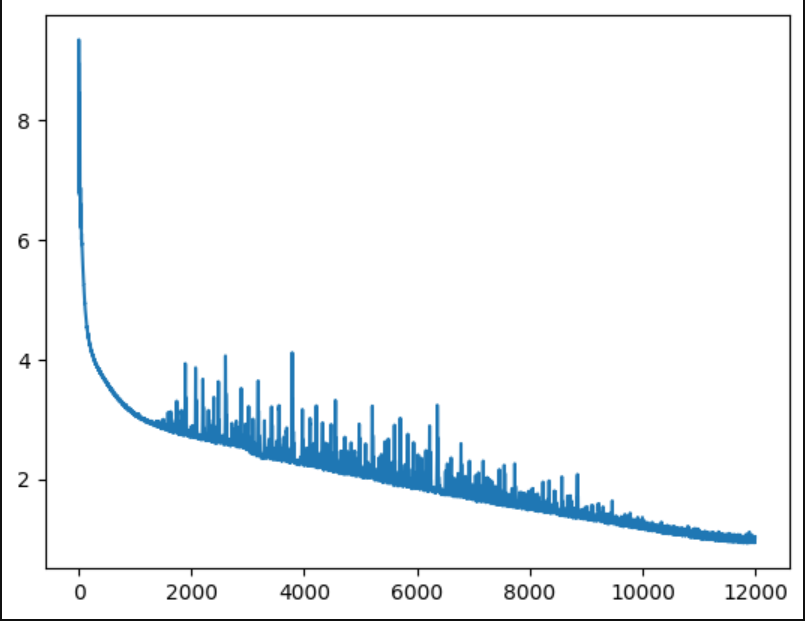}}
        {$\log(L_{N}(\vTheta))$ vs. iterations.}
		\\
		\hline
		\subf{\includegraphics[width=50mm]{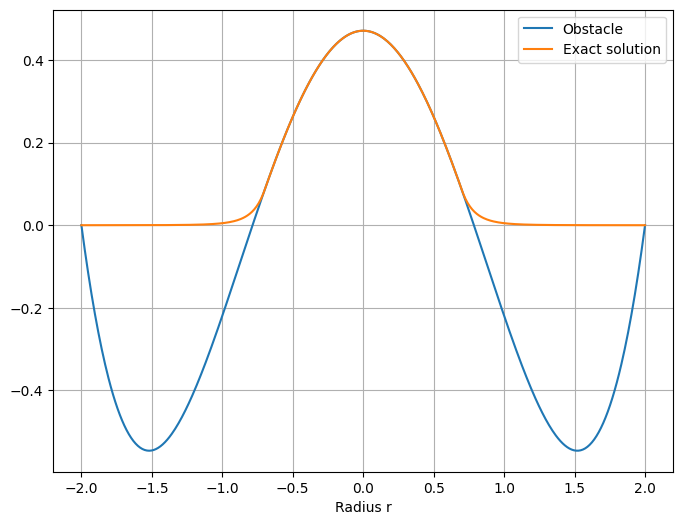}}
		{ Exact solution $u$ and obstacle $g$. }
		&
		\subf{\includegraphics[width=50mm]{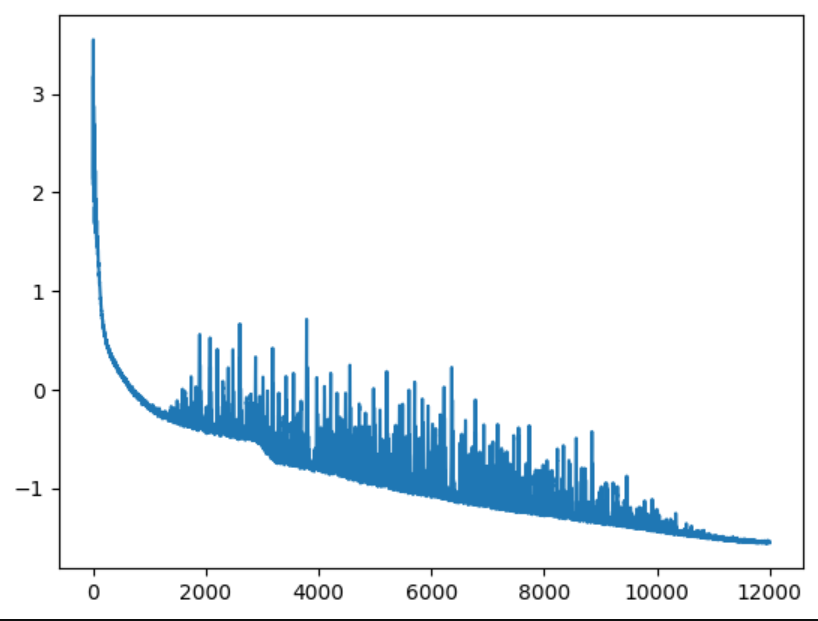}}
		{ $ \log \|u_{\vTheta} - u\|  $ vs. iterations.}
		\\
		\hline
	\end{tabular}
	\caption{Top left: A slice of the computational solution $u_{\vTheta}$ to example \eqref{sec:example1} in the case $d = 10$, $r_0 = 0.7$, and $R_0 = 2$. In the computation, we used a learning rate $\ell = 0.001$, with 20000 collocation points, 10000 optimization steps, and 65112 degrees of freedom. The bottom left panel exhibits the obstacle and the exact solution as a function of $r =|x|$. We also report the evolution of the loss function (top right) and the $L^2$-error (bottom right). The $L^2$-error was estimated using Monte Carlo integration, with a sample of 40000 points.}
	\label{fig:dim2}
\end{figure}

\begin{figure}[h]
	\centering
	\begin{tabular}{|c|c|c|c|}
		\hline
		\subf{\includegraphics[width=50mm]{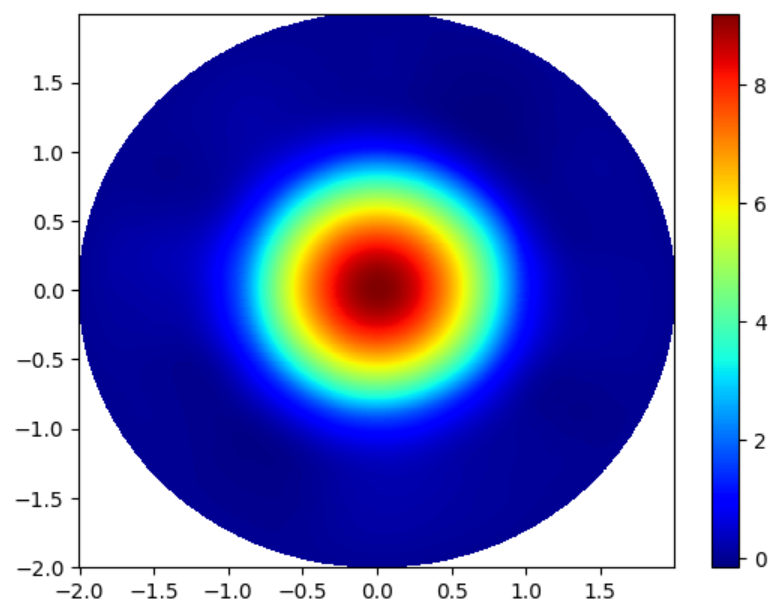}}
		{$ \restr{u_{\vTheta}}{\{x_3,\ldots,x_{20} = 0\}}$.}
		&
		\subf{\includegraphics[width=50mm]{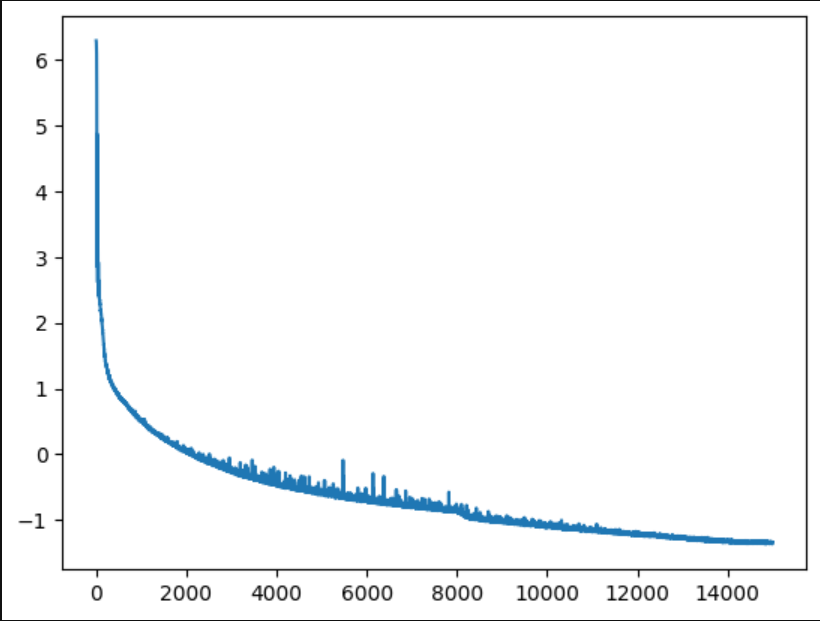}}
        {$\log(L_{N}(\vTheta))$ vs. iterations.}
		\\
		\hline
		\subf{\includegraphics[width=50mm]{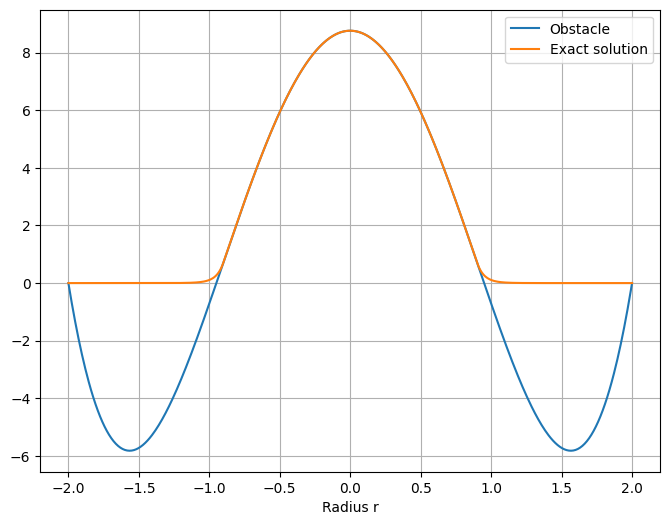}}
		{ Exact solution $u$ and obstacle $g$. }
		&
		\subf{\includegraphics[width=50mm]{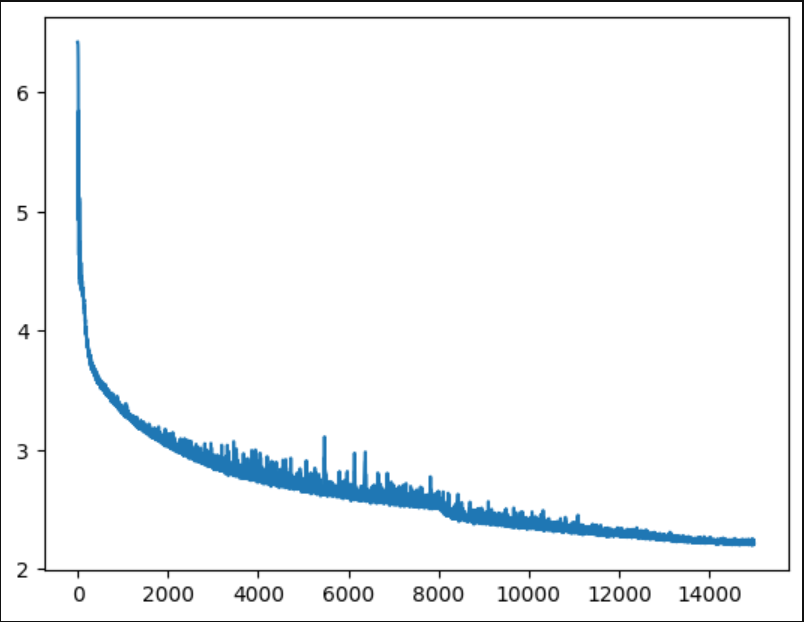}}
		{  $ \log \|u_{\vTheta} - u\|  $ vs. iterations.}
		\\
		\hline
	\end{tabular}
	\caption{A slice of the computational solution $u_{\vTheta}$ (top left), evolution of the loss function (top right), the exact solution and the obstacle as a function of $r = \|x\|_2$ (bottom left), and evolution of the $L^2$-error (bottom right) for \eqref{sec:example1} with $d = 20$, $r_0 = 0.9$, and $R_0 = 2$. We used a learning rate $\ell = 0.001$, with 20000 collocation points, 14000 optimization steps, and 148672 degrees of freedom.}
	\label{fig:dim2_k2}
\end{figure}
\end{example}

\begin{example}[A problem on a slit domain] \label{ex:singularly-perturbed}
We consider the non-Lipschitz domain $\Omega = B_{1}(0) \setminus \{0\} \times [0,1]$, with $f \equiv 0$, and the a two-peak obstacle function $g: \Omega \to \R$,
\[
g(x) = \max\{ 10e^{-30 |x - (-0.4,-0.5)|^2}-1, \, 0 \} + \max\{ 15e^{-30 |x - (-0.4,0.5)|^2}-1, \, 0 \}. 
\]
Figures \ref{fig:sing_pert} and \ref{fig:sol_vs_obs} exhibit our computed solutions for this instance of problem \eqref{eq:obstacle}. In this case, we observe a fast convergence towards the solution, reaching $L_N (\vTheta) = 42.46$. We employed a two-layer architecture with 150 neurons per layer and a Softplus activation function with parameter $\beta = 100$ for both $u_\vTheta$ and $\phi_\vTheta$. For $\eta_\vTheta$, we employ a piecewise constant architecture comprising a three-layer $\HNN_3$, where the first and third layers use the ReLU activation function, and the second layer adopts the step activation function (see Proposition \ref{prop:step}). During training, the STE is used to approximate the gradient with respect to the parameters of $\eta_{\vTheta}$ by substituting the derivative of the Heaviside activation functions with $\frac{1}{c}\mathbb{I}_{[0,c]}(x)$, where $c = 0.5$.
Our results indicate that the method performs robustly even on non-Lipschitz domains, which lie outside the scope of the theoretical framework.

\begin{figure}[h]
	\centering
	\begin{tabular}{|c|c|c|c|}
		\hline
		\subf{\includegraphics[width=50mm]{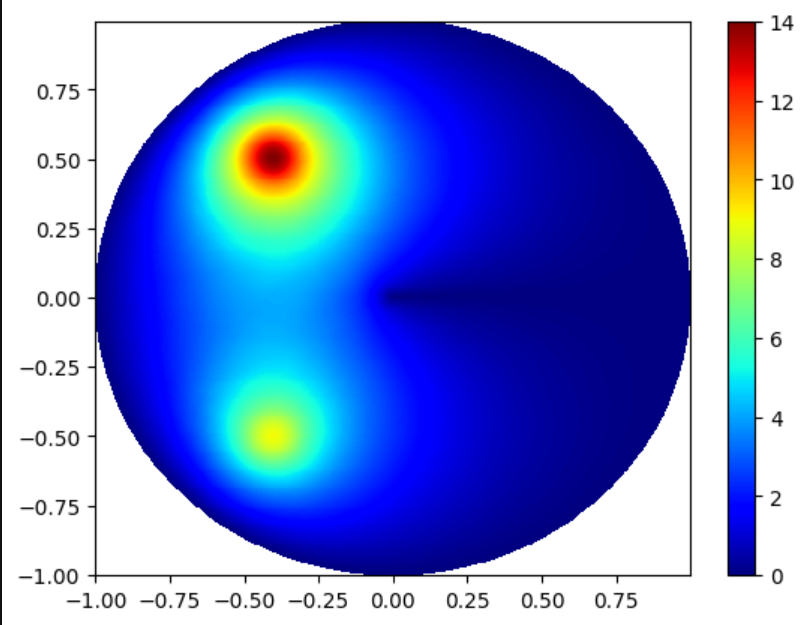}}
		{$u_{\vTheta}(x)$.}
		&
		\subf{\includegraphics[width=50mm]{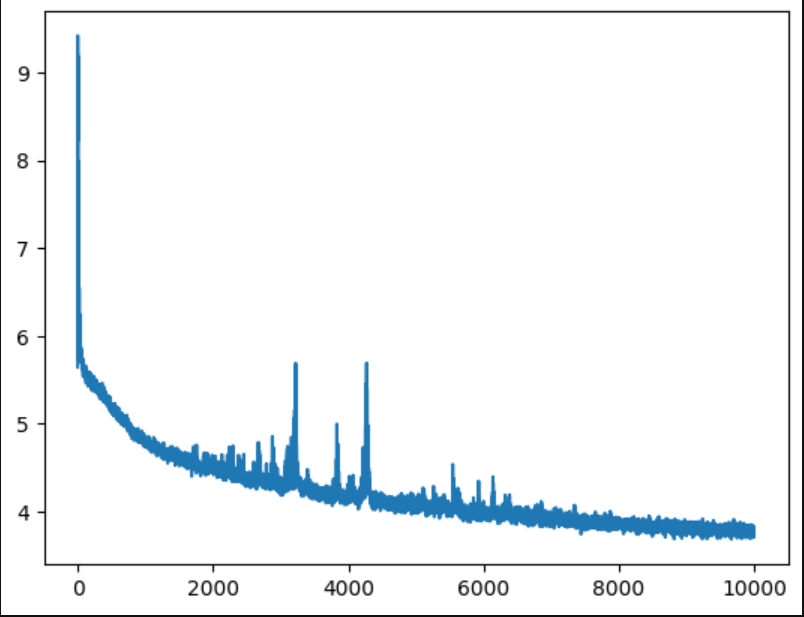}}
        {$\log(L_{N}(\vTheta))$ vs. iterations.}
		\\
		\hline
		\subf{\includegraphics[width=50mm]{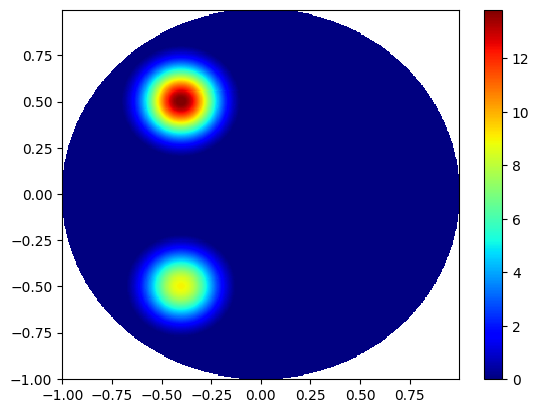}}
		{The obstacle $g(x)$.}
		&
		\subf{\includegraphics[width=50mm]{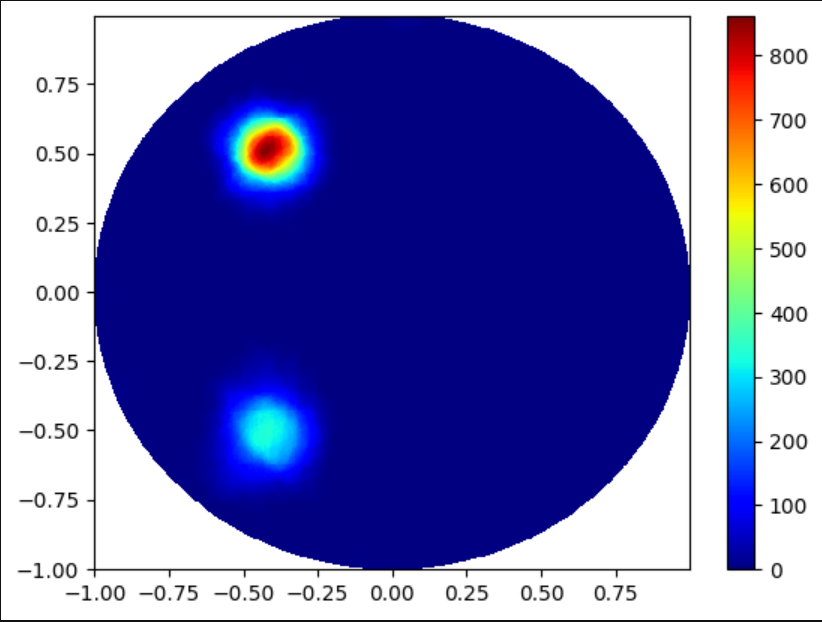}}
		{$\eta^2_{\vTheta}(x)$.}
		\\
		\hline
	\end{tabular}
	\caption{
	Computational solution $u_{\vTheta}$ (top left), evolution of the loss function (top right), the obstacle function $g$ (bottom left), and the computed contact set $\eta^2_{\vTheta}$ (bottom right). We used a learning rate $\ell = 0.001$, with 4000 collocation points, 10000 optimization steps, and 66654 degrees of freedom. In this case, the surrogate function $\dD(x)$ agrees with  $d(x,\partial \Omega)$.}
	\label{fig:sing_pert}
\end{figure}

\begin{figure}[h]
	\centering
        \includegraphics[width=80mm]{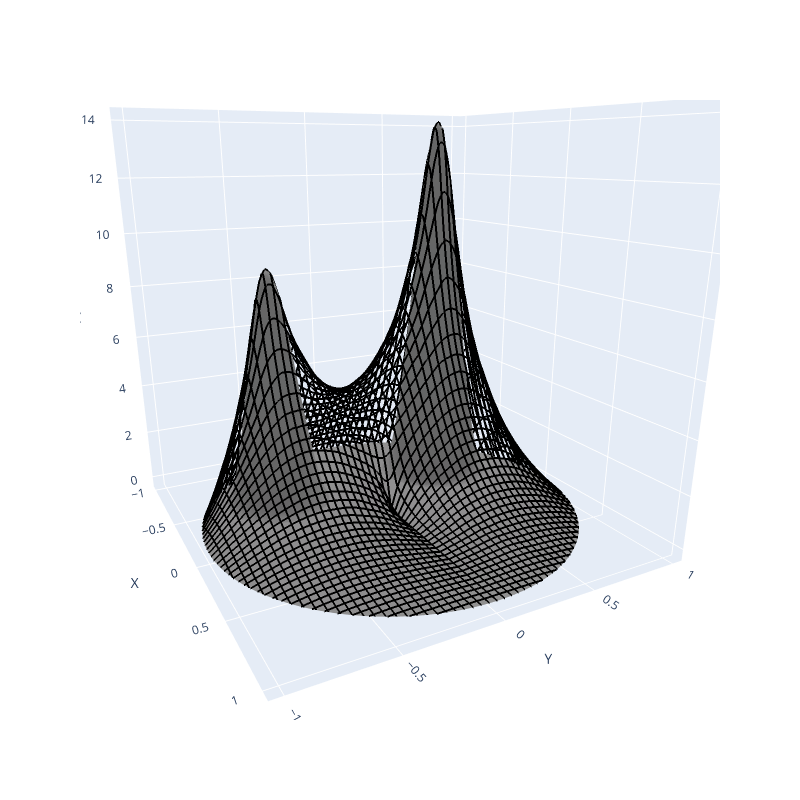}
	\caption{
	The computational solution $u_{\vTheta}$ (black wire-frame), and the obstacle function $g$ (gray), in Example \ref{ex:singularly-perturbed}.
	}
	\label{fig:sol_vs_obs}
\end{figure}

 \end{example}

\bibliographystyle{abbrv}
\bibliography{FOSLS.bib}

\begin{thebibliography}{10}

\bibitem{Arora18}
R.~Arora, A.~Basu, P.~Mianjy, and A.~Mukherjee.
\newblock Understanding deep neural networks with rectified linear units.
\newblock In {\em International Conference on Learning Representations}, 2018.

\bibitem{bahja2023physics}
H.~E. Bahja, J.~C. Hauffen, P.~Jung, B.~Bah, and I.~Karambal.
\newblock A physics-informed neural network framework for modeling obstacle-related equations.
\newblock {\em arXiv preprint arXiv:2304.03552}, 2023.

\bibitem{Barron93}
A.~Barron.
\newblock Universal approximation bounds for superpositions of a sigmoidal function.
\newblock {\em IEEE Transactions on Information theory}, 39(3):930--945, 1993.

\bibitem{STE}
Y.~Bengio, N.~L{\'e}onard, and A.~Courville.
\newblock Estimating or propagating gradients through stochastic neurons for conditional computation.
\newblock In {\em Proceedings of the 2nd International Conference on Learning Representations (ICLR 2013)}, 2013.

\bibitem{Berg18}
J.~Berg and K.~Nystr{\"o}m.
\newblock A unified deep artificial neural network approach to partial differential equations in complex geometries.
\newblock {\em Neurocomputing}, 317:28--41, 2018.

\bibitem{DeepFOSLS}
F.~Bersetche and J.~Borthagaray.
\newblock {A deep first-order system least squares method for solving elliptic PDEs}.
\newblock {\em Computers \& Mathematics with Applications}, 129:136--150, 2023.

\bibitem{braides2006}
A.~Braides.
\newblock A handbook of {$\Gamma$}-convergence.
\newblock In {\em Handbook of Differential Equations: stationary partial differential equations}, volume~3, pages 101--213. Elsevier, 2006.

\bibitem{cheng2023deep}
X.~Cheng, X.~Shen, X.~Wang, and K.~Liang.
\newblock A deep neural network-based method for solving obstacle problems.
\newblock {\em Nonlinear Anal. Real World Appl.}, 72:103864, 2023.

\bibitem{chouly2023finite}
F.~Chouly, P.~Hild, and Y.~Renard.
\newblock {\em Finite element approximation of contact and friction in elasticity}.
\newblock Springer, 2023.

\bibitem{ciarlet2002finite}
P.~G. Ciarlet.
\newblock {\em The finite element method for elliptic problems}.
\newblock SIAM, 2002.

\bibitem{Cybenko89}
G.~Cybenko.
\newblock Approximation by superpositions of a sigmoidal function.
\newblock {\em Mathematics of control, signals and systems}, 2(4):303--314, 1989.

\bibitem{darehmiraki2022deep}
M.~Darehmiraki.
\newblock A deep learning approach for the obstacle problem.
\newblock In {\em Proceedings of Academia-Industry Consortium for Data Science: AICDS 2020}, pages 179--188. Springer, 2022.

\bibitem{DeHaPe}
R.~DeVore, B.~Hanin, and G.~Petrova.
\newblock Neural network approximation.
\newblock {\em Acta Numer.}, 30:327--444, 2021.

\bibitem{weinan2022some}
W.~E and S.~Wojtowytsch.
\newblock {Some observations on high-dimensional partial differential equations with Barron data}.
\newblock In {\em Mathematical and Scientific Machine Learning}, pages 253--269. PMLR, 2022.

\bibitem{E18}
W.~E and B.~Yu.
\newblock {The deep Ritz method: a deep learning-based numerical algorithm for solving variational problems}.
\newblock {\em Communications in Mathematics and Statistics}, 6(1):1--12, 2018.

\bibitem{Fuhrer20}
T.~F{\"u}hrer.
\newblock First-order least-squares method for the obstacle problem.
\newblock {\em Numerische Mathematik}, 144(1):55--88, 2020.

\bibitem{glowinski2013numerical}
R.~Glowinski.
\newblock {\em Numerical methods for nonlinear variational problems}.
\newblock Springer Science \& Business Media, 2013.

\bibitem{He20}
C.~He, X.~Hu, and L.~Mu.
\newblock A mesh-free method using piecewise deep neural network for elliptic interface problems.
\newblock {\em Journal of Computational and Applied Mathematics}, 412:114358, 2022.

\bibitem{HeLiXu}
J.~He, L.~Li, and J.~Xu.
\newblock Re{LU} deep neural networks from the hierarchical basis perspective.
\newblock {\em Comput. Math. Appl.}, 120:105--114, 2022.

\bibitem{He_etal20}
J.~He, L.~Li, J.~Xu, and C.~Zheng.
\newblock Relu deep neural networks and linear finite elements.
\newblock {\em J. Comput. Math.}, 38(3):502--527, 2020.

\bibitem{hornik1991}
K.~Hornik.
\newblock Approximation capabilities of multilayer feedforward networks.
\newblock {\em Neural networks}, 4(2):251--257, 1991.

\bibitem{QNN}
I.~Hubara, M.~Courbariaux, D.~Soudry, R.~El-Yaniv, and Y.~Bengio.
\newblock Quantized neural networks: training neural networks with low precision weights and activations.
\newblock {\em J. Mach. Learn. Res.}, 18:30, 2018.
\newblock Id/No 187.

\bibitem{kinderStam}
D.~Kinderlehrer and G.~Stampacchia.
\newblock {\em An introduction to variational inequalities and their applications}.
\newblock SIAM, 2000.

\bibitem{ADAM}
D.~Kingma and J.~Ba.
\newblock Adam: A method for stochastic optimization.
\newblock In {\em In Proceedings of the 3rd InternationalConference for Learning Representations---ICLR}, pages 7--9, San Diego, CA, 2015.

\bibitem{Kufner}
A.~Kufner.
\newblock Weighted {Sobolev} spaces. {Licensed} ed.
\newblock A {Wiley}-{Interscience} {Publication}. {Chichester} etc.: {John} {Wiley} \& {Sons}. 116 p. {{\textsterling}} 15.00 (1985)., 1985.

\bibitem{liu2}
M.~Liu and Z.~Cai.
\newblock {Adaptive two-layer ReLU neural network: II. Ritz approximation to elliptic PDEs}.
\newblock {\em Comput. Math. Appl.}, 113:103--116, 2022.

\bibitem{liu1}
M.~Liu, Z.~Cai, and J.~Chen.
\newblock {Adaptive two-layer ReLU neural network: I. best least-squares approximation}.
\newblock {\em Comput. Math. Appl.}, 113:34--44, 2022.

\bibitem{opschoor2024first}
J.~A. Opschoor, P.~C. Petersen, and C.~Schwab.
\newblock First order system least squares neural networks.
\newblock {\em arXiv preprint arXiv:2409.20264}, 2024.

\bibitem{pinkus}
A.~Pinkus.
\newblock Approximation theory of the {MLP} model in neural networks.
\newblock In {\em Acta Numerica Vol. 8, 1999}, pages 143--195. Cambridge: Cambridge University Press, 1999.

\bibitem{PINN}
M.~Raissi, P.~Perdikaris, and G.~E. Karniadakis.
\newblock Physics-informed neural networks: A deep learning framework for solving forward and inverse problems involving nonlinear partial differential equations.
\newblock {\em J. Comput. Phys.}, 378:686--707, 2019.

\bibitem{rodrigues1987obstacle}
J.-F. Rodrigues.
\newblock {\em Obstacle problems in mathematical physics}.
\newblock Elsevier, 1987.

\bibitem{DGM18}
J.~Sirignano and K.~Spiliopoulos.
\newblock {DGM: A deep learning algorithm for solving partial differential equations}.
\newblock {\em Journal of Computational Physics}, 375:1339--1364, 2018.

\bibitem{dist}
N.~Sukumar and A.~Srivastava.
\newblock Exact imposition of boundary conditions with distance functions in physics-informed deep neural networks.
\newblock {\em Comput. Methods Appl. Mech. Eng}, 389:114333, 2022.

\bibitem{tremolieres2011numerical}
R.~Tr{\'e}molieres, J.-L. Lions, and R.~Glowinski.
\newblock {\em Numerical analysis of variational inequalities}.
\newblock Elsevier, 2011.

\bibitem{wojtowytsch2020can}
S.~Wojtowytsch and W.~E.
\newblock {Can shallow neural networks beat the curse of dimensionality? A mean field training perspective}.
\newblock {\em IEEE Trans. Artif. Intell.}, 1(2):121--129, 2020.

\bibitem{FNM}
J.~Xu.
\newblock Finite neuron method and convergence analysis.
\newblock {\em Commun. Comput. Phys.}, 28:1707--1745, 2020.

\bibitem{Yarotsky17}
D.~Yarotsky.
\newblock Error bounds for approximations with deep relu networks.
\newblock {\em Neural Networks}, 94:103--114, 2017.

\bibitem{zhao2022two}
X.~E. Zhao, W.~Hao, and B.~Hu.
\newblock Two neural-network-based methods for solving elliptic obstacle problems.
\newblock {\em Chaos Solitons Fractals}, 161:112313, 2022.

\end{thebibliography}
\end{document}